\documentclass{amsart}
\usepackage[english]{babel}
\usepackage{amsmath, amsthm, amssymb}
\usepackage{enumerate}
\usepackage{geometry}
\geometry{a4paper, total={170mm, 257mm}, left=20mm, top=20mm}

\usepackage{hyperref}
\usepackage{cleveref}

\newcommand{\R}{\mathbb{R}}
\newcommand{\Q}{\mathbb{Q}}
\newcommand{\Z}{\mathbb{Z}}

\renewcommand{\O}{\mathcal{O}}
\renewcommand{\P}{\mathcal{P}}
\newcommand{\calC}{\mathcal{C}}

\newcommand{\calQ}{\mathcal{Q}}
\newcommand{\frakC}{\mathfrak{c}}
\DeclareMathOperator{\Reg}{Reg}

\DeclareMathOperator{\Nm}{N}

\newcommand{\leg}[2]{\left( \frac{#1}{#2} \right)}
\newcommand{\Prim}{\mathrm{Prim}}

\theoremstyle{plain}
\newtheorem{theorem}{Theorem}[section]
\newtheorem{prop}[theorem]{Proposition}
\newtheorem{lemma}[theorem]{Lemma}

\theoremstyle{definition}

\title{Counting principal ideals of small norm in the simplest cubic fields}
\author{Mikul\'a\v{s} Zindulka}
\address{Charles University, Faculty of Mathematics and Physics, Department of Algebra,
Sokolovsk\'{a} 83, 186 75 Praha 8, Czech Republic}
\email{mikulas.zindulka@matfyz.cuni.cz}
\subjclass[2020]{11R16, 11R80}
\keywords{Ideal counting function, principal ideals, simplest cubic fields, totally positive integers}
\thanks{The author acknowledges support by Czech Science Foundation (GA\v{C}R) grant 21-00420M, Charles University project PRIMUS/24/SCI/010, and GAUK project No. 134824.}

\begin{document}

\begin{abstract}
We estimate the number of principal ideals $ I $ of norm $ \mathrm{N}(I) \leq x $ in the family of the simplest cubic fields. The advantage of our result is that it provides the correct order of magnitude for arbitrary $ x \geq 1 $, even when $ x $ is significantly smaller than the discriminant. In particular, it shows that there exist surprisingly many principal ideals of small norm.
\end{abstract}

\maketitle

\section{Introduction}
\label{sec:Intro}

Let $ K $ be a number field of degree $ d $ and let $ I_K(x) $ be the number of integral ideals $ I $ in $ K $ with norm $ \Nm(I) \leq x $. The function $ I_K $ is called the \emph{ideal counting function}. It is a classical result of Weber~\cite{We61} that
\begin{equation}
\label{eq:IW}
	I_K(x) = \kappa_K\cdot x+O_K\left(x^{1-\frac{1}{d}}\right),
\end{equation}
where
\[
	\kappa_K = \frac{2^{r_1}(2\pi)^{r_2}\Reg_K h_K}{w\sqrt{|\Delta_K|}}.
\]
The constant $ \kappa_K $ depends on the invariants of the number field: $ r_1 $ is the number of real embeddings, $ 2r_2 $ is the number of complex embeddings, $ \Reg_K $ is the regulator of $ K $, $ h_K $ is the class number, $ w $ is the number of roots of unity contained in $ K $, and $ \Delta_K $ is the discriminant.

Weber's result was improved by Landau \cite[Satz 210]{La49} to
\begin{equation}
\label{eq:IL}
	I_K(x) = \kappa_K\cdot x+O_K\left(x^{1-\frac{2}{d+1}}\right).
\end{equation}

A closely related problem is to find an asymptotic formula for the number of integral ideals $ I $ in $ K $ with norm $ \Nm(I) \leq x $ which belong to a given class $ \frakC $ of the class group. If $ P_K(x,\frakC) $ denotes the counting function for these ideals, then
\begin{equation}
\label{eq:P}
	P_K(x,\frakC) = C_K\cdot x+O_K\left(x^{1-\frac{1}{d}}\right),
\end{equation}
where
\[
	C_K = \frac{\kappa_K}{h_K} = \frac{2^{r_1}(2\pi)^{r_2}\Reg_K}{w\sqrt{|\Delta_K|}}.
\]
When $ \frakC $ is the class of principal ideals, the \emph{principal ideal counting function} will be denoted by $ P_K(x) $.

The above formulas show that for a fixed $ K $, the number of ideals grows linearly in $ x $. To gain any useful information about $ I_K(x) $ and $ P_K(x,\mathfrak{c}) $ when $ K $ is not fixed, we need an explicit bound for the implied constant in the error term. Murty and van Order~\cite[Theorem 5]{MO} revised the argument of Weber to prove an estimate for $ P_K(x,\mathfrak{c}) $ with an effective error term. Debaene \cite[Corollary 2]{De19} proved an explicit estimate for the error term, which we formulate and apply in \Cref{sec:Apriori}.

An explicit version of \eqref{eq:IW} can be deduced from an explicit version of \eqref{eq:P}, see \cite{De19,MO}. An explicit version of \eqref{eq:IL} was proved in the thesis of Sunley \cite{Su71}, who showed that for $ x > 0 $ and $ d \geq 2 $, we have
\[
	|I_K(x)-\kappa_Kx| \leq \Lambda(d)|\Delta_K|^{\frac{1}{d+1}}\left(\log |\Delta_K|\right)^dx^{1-\frac{2}{d+1}},
\]
where $ \Lambda(d) $ is a certain constant depending only on the degree of the field. Recently, an upper bound for $ \Lambda(d) $ was significantly improved by Lee \cite{Le22}, who also reduced the power of $ \log |\Delta_K| $ from $ d $ to $ d-1 $.

Norms of principal ideals have been extensively studied in imaginary quadratic fields. Let $ K = \Q(\sqrt{-D}) $ be an imaginary quadratic field of discriminant $ -D $ and fix a basis $ (1,\omega_D) $ of the ring of integral elements $ \O_K $. The norm $ \Nm(\alpha) $ of $ \alpha = x_1+x_2\omega_D $ is a positive definite binary integral quadratic form of discriminant $ -D $. Besides $ P_K(x) $, one may also be interested in the number of integers $ 0 \leq n \leq x $ represented as the norm of some principal ideal in $ K $. More generally, let $ Q $ be a positive definite binary integral quadratic form of discriminant $ -D $, and let
\[
	N_Q(x) := \#\left\{0 \leq n \leq x : n = Q(m_1,m_2)\text{ for some }m_1, m_2 \in \Z\right\}.
\]

In the following, we use the standard analytic notation $ \ll $ and $ \asymp $. Let $ f: \R^2 \to \R $ and $ g: \R^2 \to \R $ be two functions and $ S \subset \R^2 $. We write $ f(a,x) = O(g(a,x)) $, where $ (a,x) \in S $ if there exist an absolute constant $ c > 0 $ such that for all $ (a, x) \in S $, we have $ |f(a,x)| \leq c g(a,x) $. The notation $ f(a,x) \ll g(a,x) $ has the same meaning as $ f(a,x) = O(g(a,x)) $. When $ c $ depends on some other quantities such as $ r $, then we write $ f(a,x) \ll_{r} g(a,x) $ etc. The notation $ f(a,x) \asymp g(a,x) $, where $ (a,x) \in S $ means that there exist constants $ c_1, c_2 > 0 $ such that for all $ (a,x) \in S $, we have $ c_1g(a,x) \leq f(a,x) \leq c_2g(a,x) $.

Bernays \cite{Ber12} showed that when $ Q $ is fixed, then there exists a constant $ c(D) > 0 $ (depending only on $ D $) such that
\[
	N_Q(x) = c(D)\cdot\frac{x}{\sqrt{\log x}}+O_D\left(\frac{x}{(\log x)^{\frac{1}{2}+\delta}}\right)
\]
for every $ \delta < \min(1/h,1/4) $, see \cite[p. 7]{BG}. Blomer and Granville \cite{BG} gave estimates for $ N_Q(x) $ depending on the relative size of $ D $ and $ x $. Let $ h $ be the class number of $ \Q(\sqrt{-D}) $, $ g $ the number of genera, and $ \ell_{-D} := L(1,\chi_{-D})\frac{\phi(D)}{D} $, where $ \chi_{-D} = \leg{-D}{\cdot} $ is the quadratic character of $ \Q(\sqrt{-D}) $. They introduce the quantity
\[
	\kappa := \frac{\log(h/g)}{(\log 2)\log(\ell_{-D}\log x)},
\]
and split the estimates for $ N_f(x) $ into three different ranges depending on the size of $ \kappa $:
\begin{align*}
	N_Q(x)& \asymp \frac{L(1,\chi_D)}{\tau(D)}\frac{x}{\sqrt{\ell_{-D}\log x}}&\text{for }0 \leq \kappa \leq 1/2,\\
	N_Q(x)& \asymp \frac{L(1,\chi_{-D})}{\tau(D)}\frac{(\ell_{-D}\log x)^{-1+\kappa(1-\log(2\kappa))}}{\left(1+\left(\kappa-\frac{1}{2}\right)(1-\kappa)\sqrt{\log\log x}\right)}x&\text{for }1/2 < \kappa < 1,\\
	N_Q(x)& \asymp \frac{x}{\sqrt{D}}&\text{for }1 \leq \kappa \ll \log D/\log\log D.
\end{align*}
These bounds are proved in \cite[Theorem 6]{BG}, except for the lower bound in the range $ \kappa \in \left[\frac{1}{2}-\varepsilon,\frac{1}{\log 2}+\varepsilon\right] $.

In this paper, our aim is to estimate $ P_K(x) $ when $ K $ belongs to the family of \emph{the simplest cubic fields} introduced by Shanks \cite{Sh74}. The field $ K_a $ is defined for $ a \in \Z_{\geq -1} $ to be the splitting field of the polynomial
\[
	f_a(X) = X^3-aX^2-(a+3)X-1.
\]
One important feature of these fields is that they are Galois and their Galois group is cyclic. Our approach requires working directly with the integral basis of $ K_a $. Thus, we always assume that $ K_a $ is \emph{monogenic}, i.e., the ring of integers is $ \O_{K_a} = \Z[\rho] $, where $ \rho $ is the largest root of $ f_a $. This assumption is relatively mild because $ K_a $ has this property for a positive density of $ a $. One also has an explicit description of the fundamental units in this situation, and hence precise bounds for the regulator. Many authors used these properties to study the fields $ K_a $, especially their class numbers \cite{Ba16, By00, Fo11, Ki03, Wa87}.

If we express an element $ \alpha \in \O_{K_a} $ in the basis $ (1,\rho,\rho^2) $ as $ \alpha = m+n\rho+o\rho^2 $, then it is possible to explicitly compute $ \Nm(\alpha) $, and we obtain a cubic form in the variables $ m, n, o $:
\begin{multline*}
	\Nm(m+n\rho+o\rho^2) = m^3+am^2n+(a^2+2a+6)m^2o-(a+3)mn^2\\-(a^2+3a+3)mno+(a^2+4a+9)mo^2+n^3+an^2o-(a+3)no^2+o^3.
\end{multline*}
Thus, we are essentially counting solutions $ (m,n,o) \in \Z^3 $ to $ \Nm(m+n\rho+o\rho^2) \leq x $.

To emphasize that the principal ideal counting function now depends on the two variables $ a $ and $ x $, we write $ P(a, x) $ instead of $ P_{K_a}(x) $. Similarly, we let $ \Reg_a $ and $ \Delta_a $ be the regulator and the discriminant of $ K_a $.

It is clear that if $ x $ is significantly larger than $ a $, the main term in \eqref{eq:P} overweights the error term. However, it is not immediately clear how large $ x $ needs to be for this to happen. We specialize \cite[Corollary 2]{De19} to the simplest cubic fields in \Cref{thm:Apr}, and show that for a certain explicit constant $ c $, we have
\[
	P(a, x) \asymp \frac{(\log a)^2 x}{a^2},\qquad x \geq ca^6(\log a)^4.
\]
This settles the problem for large norms and it is the small norms that remain of interest.

A closely related problem is to estimate the number of primitive principal integral ideals in $ K_a $ (we recall that an ideal $ I \subset \O_K $ is called \emph{primitive} if $ n \nmid I $ for every $ n \in \Z_{\geq 2} $). We let $ P_p(a, x) $ denote the associated counting function.

Lemmermeyer and Peth\"{o}~\cite[Theorem 1]{LP} show that if $ I = (\alpha) \subsetneq \O_{K_a} $ is a primitive principal ideal in $ K_a $ of minimal norm, then $ \Nm(I) = 2a+3 $. Hence, the only primitive principal ideal of norm $ 1 \leq \Nm(I) < 2a+3 $ is $ I = \O_{K_a} $.

A study of various properties of the simplest cubic fields was made by Kala and Tinkov\'{a}~\cite{KT}, see also \cite{GT22, Ti23a, Ti23b}. Their main tool are the so-called \emph{indecomposable elements}. In the present paper, we will not use indecomposables at all but let us mention that one motivation for studying them lies in the fact that they can be applied to the theory of universal quadratic forms over number fields \cite{BK1, BK2, Ki00}. The authors of \cite{KT} provide a characterization of indecomposable elements in $ K_a $ and apply them to several problems, including the problem to estimate the number of primitive principal ideals of norm $ \leq x $ for $ 2a+3 \leq x \leq a^2 $. More precisely, the bound $ x $ is assumed to be of the form $ x = a^{1+\delta} $ for $ 0 < \delta \leq 1 $ and it is proved that $ P(a, x) \asymp a^{2\delta/3} $. However, by examining the proof, one sees that it is not crucial to consider only $ x $ of this form. Thus~\cite[Theorem~1.3]{KT} amounts to
\[
	P_p(a, x) \asymp \left(\frac{x}{a}\right)^{2/3},\qquad a \leq x \leq a^2.
\]
This result is surprising because it shows that the number of ideals of small norm is quite large. For example, for $ x = a^2 $, we get $ \asymp a^{2/3} $ ideals, while based on the Class Number Formula, we would expect only $ \asymp (\log a)^2 $. Naturally, one asks how to obtain a similar estimate for $ x $ in a wider range and this is what inspired the present work.

On the other hand, if $ n \in \Z_{\geq 2} $, then the principal ideal $ I = (n) $ has norm $ \Nm(I) = n^3 $. The number of (non-primitive) principal ideals of norm $ \leq x $ is therefore at least $ \lfloor x^{1/3} \rfloor $. We will prove below that this is the true count if $ x \leq a^2 $, i.e.,
\[
	P(a, x) \asymp x^{1/3},\qquad 1 \leq x \leq a^2.
\]
In view of our discussion, what remains is to close the gap between $ a^2 $ and $ a^{6+\varepsilon} $. This is the content of our main theorem.

\begin{theorem}
\label{thm:P}
Let $ K_a = \Q(\rho) $ be a simplest cubic field, where $ \rho $ is the largest root of $ f_a $. Assume that $ \O_{K_a} = \Z[\rho] $. If $ a \geq 8 $ and $ x \geq 1 $, then
\[
	P(a, x) \asymp \frac{(\log a)^2x}{a^2}+\left(\frac{x}{a}\right)^{2/3}+x^{1/3}
\]
and
\[
	P_p(a, x) \asymp \frac{(\log a)^2x}{a^2}+\left(\frac{x}{a}\right)^{2/3}+1.
\]
\end{theorem}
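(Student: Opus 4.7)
The plan is to split $ x \geq 1 $ into three regimes, each dominated by one of the three summands, and prove matching upper and lower bounds in each. In the large regime $ x \geq c a^6 (\log a)^4 $ the a priori estimate \Cref{thm:Apr} is already sharp: for the simplest cubic fields $ \Reg_a \asymp (\log a)^2 $ and $ |\Delta_a| \asymp a^4 $, so $ C_{K_a} \asymp (\log a)^2/a^2 $ and the main term $ C_{K_a} x $ dominates the effective error, yielding $ P(a,x) \asymp (\log a)^2 x/a^2 $. In the small regime $ x \leq a^2 $, the explicit cubic norm form from the introduction has every non-diagonal coefficient of size $ \asymp a^2 $; a short term-by-term analysis shows that $ |\Nm(m+n\rho+o\rho^2)| \leq x $ with $ x < a^2 $ forces $ n = o = 0 $, so only the rational-integer generators contribute and $ P(a,x) \asymp x^{1/3} $ while $ P_p(a,x) \asymp 1 $.

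The substance of the proof is the middle regime $ a^2 \leq x \leq c a^6 (\log a)^4 $. For the \emph{lower bound}, I would recast the construction of Kala and Tinkov\'{a} \cite{KT} so that it no longer depends on the restriction $ x \leq a^2 $: given $ (n,o) \in \Z^2 \setminus \{(0,0)\} $ with $ |n|, |o| \lesssim (x/a)^{1/3} $, choose $ m $ to cancel the leading $ a^2 $-contribution in the norm form, producing $ \asymp (x/a)^{2/3} $ pairwise inequivalent primitive principal ideals of norm $ \leq x $; inequivalence under $ \O_{K_a}^\times $ is verified via the logarithmic embedding. Together with the rational multiples $ (m) $ with $ |m| \leq x^{1/3} $ this yields the $ (x/a)^{2/3} + x^{1/3} $ lower bound; the $ (\log a)^2 x/a^2 $ contribution enters at the upper end of the regime, where the Landau-type main term is unconditional by \Cref{thm:Apr}.

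The main obstacle is the \emph{upper bound} in the middle regime, where neither Landau's asymptotic nor the trivial small-$ x $ argument is sharp. I plan to count $ (m,n,o) \in \Z^3 $ with $ |\Nm(m+n\rho+o\rho^2)| \leq x $ by lattice-point techniques and then quotient by the action of $ \O_{K_a}^\times $, a group of rank two whose fundamental units have logarithmic heights $ \asymp \log a $. After projecting to a fundamental domain for the unit action, the region $ \{|\Nm(\alpha)| \leq x\} $ has volume $ \asymp (\log a)^2 x/a^2 $, which produces the main term. The delicate step is controlling the boundary lattice-point contribution: the boundary is a piece of a cubic hypersurface and is essentially two-dimensional, so by the classical principle that lattice-point counts in a body of volume $ V $ differ from $ V $ by at most the $ (d-1)/d $-th power of the relevant size, one expects a boundary error of order $ (x/a)^{2/3} $. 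Rational-integer generators supply the remaining $ x^{1/3} $ term. Finally, $ P_p(a,x) $ follows from $ P(a,x) $ by M\"{o}bius inversion $ P_p(a,x) = \sum_{n \geq 1} \mu(n) P(a, x/n^3) $: the rational-multiples contribution to $ P(a,x) $ telescopes to a bounded constant, replacing $ x^{1/3} $ by $ 1 $.
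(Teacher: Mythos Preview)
Your plan has real gaps in both the small and middle regimes. The small-regime claim is false as stated: it is not true that $|\Nm(m+n\rho+o\rho^2)| \leq x$ with $x < a^2$ forces $n=o=0$. By \cite{LP} there are primitive principal ideals of norm $2a+3$, and since $2a+3$ is not a cube these cannot be generated by rational integers; more generally \cite{KT} already gives $\asymp (x/a)^{2/3}$ non-rational primitive principal ideals for $a \leq x \leq a^2$. The term-by-term argument fails because the norm form has large cancellations: several coefficients are of size $a$ or $a^2$ with alternating signs.

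The central difficulty, however, is the middle-regime upper bound, and your invocation of a ``classical principle'' for boundary errors is not an argument. The body $\{\Nm(\alpha) \leq x\}$ intersected with a Shintani cone is extremely anisotropic---the cone generators $1,\rho^2,(\rho')^{-2}$ have embeddings ranging over sizes $a^{\pm 2}$---so Davenport-type discrepancy bounds, which involve the diameter or lower-dimensional projections, are far too crude to yield $(x/a)^{2/3}$. The paper does not use any such soft principle: it parametrizes lattice points in each cone by integer coordinates $(m_1,o_1,w_0,w_1,w_2)$ tied to the identity $a^2+3a+3=(a+1)(a+2)+1$, and carries out an explicit case analysis of roughly two dozen subcases according to which coordinates are $0$, $1$, or $\geq 2$. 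The $(x/a)^{2/3}$ contribution arises from specific boundary strata (\Cref{lem:S25}, \Cref{lem:S33}, \Cref{lem:S43}) where two of the barycentric coordinates $t_i$ are $O(1/a)$; there is no geometric shortcut in the paper. Your lower bound is also incomplete: the term $(\log a)^2 x/a^2$ dominates $(x/a)^{2/3}$ already for $x \geq a^4/(\log a)^6$, but \Cref{thm:Apr} only supplies it for $x \geq c a^6(\log a)^4$; the paper fills this gap with a separate explicit construction (\Cref{lem:S1L}) valid for $x \gg a^3$.
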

\begin{proof}
This will be proved as \Cref{thm:P'}.
\end{proof}

The last term in the two estimates corresponds to $ C_K x $ from \eqref{eq:P}. We note that $ (\log a)^2x/a^2 $ is smaller than $ (x/a)^{2/3} $ for $ x < a^4/(\log a)^6 $. Only for $ x $ (roughly) larger than the discriminant $ \Delta_a \asymp a^4 $ is the order of magnitude of $ P(a, x) $ the one predicted by the Class Number Formula:
\[
	P(a, x) \asymp \frac{(\log a)^2x}{a^2},\qquad x \geq \frac{a^4}{(\log a)^6}.
\]
This is a better range than the one we got from the apriori estimate in \Cref{thm:Apr}.

Let us conclude with some open problems.
\begin{enumerate}
\item Estimates in other families: The method that we applied to the simplest cubic fields can in principle be used for every family where we know an integral basis and generators of the unit group, hence it should be possible to obtain an analogy of \Cref{thm:P} in these cases. Examples include
\begin{itemize}
	\item the family $ \Q(\sqrt{a^2+1}) $ of real quadratic fields,
	\item Ennola's cubic fields generated by a root of $ x^3+(a-1)x^2-ax-1 $, where $ a \geq 3 $.
\end{itemize}
\item Is it possible to estimate the number of ``small" principal ideals in a general cubic field? By ``small" we mean ideals of norm $ \Nm(I) \leq x $ for $ x \leq \Delta_K $. We have seen that there is a large number of such ideals in the simplest cubic fields and it would be interesting to find out to what extent this phenomenon occurs in general. 
\item Vague question: is it possible to predict the general shape of the formula for $ P(a,x) $ without going through the technical computations in the proof of \Cref{thm:P}?
\end{enumerate}

The rest of the paper is organized as follows. In \Cref{sec:Prelim}, we collect the preliminaries. In \Cref{sec:Apriori}, we prove an apriori estimate for $ P(a,x) $. The proof of \Cref{thm:P} is split into the remaining sections. In \Cref{sec:Setup}, we translate the problem of counting ideals into the problem of counting lattice points inside a certain region in $ \R^3 $. Some auxiliary lemmas are contained in \Cref{sec:Count}. The bulk of the work is done in \Cref{sec:UB}, where we obtain the upper bound for $ P(a,x) $. The lower bound is proved along with the main theorem in \Cref{sec:LB}.

\section*{Acknowledgments}
I would like to thank my PhD advisor V\'{i}t\v{e}zslav Kala for his guidance and support. I also thank Jeanine Van Order for kindly answering my questions about her paper \cite{MO}.

\section{Preliminaries}
\label{sec:Prelim}

Let $ K $ denote a number field of degree $ d = [K:\Q] $ and $ \O_K $ its ring of integers. We will always assume that $ K $ is totally real. The \emph{numerical norm} of an ideal $ I \subset \O_K $ is denoted $ \Nm(I) $. It is always a positive integer. In the case when $ I = (\alpha) $ is principal, we have $ \Nm(I) = |\Nm(\alpha)| $, where $ \Nm(\alpha) $ is the norm of $ \alpha $.

The number field $ K $ has $ d $ real embeddings
\[
	\sigma_i: K \hookrightarrow \R,\qquad 1 \leq i \leq d.
\]
An element $ \alpha \in K $ is called \emph{totally positive} if $ \sigma_i(\alpha) > 0 $ for every $ i $, $ 1 \leq i \leq d $. If the ring of integers contains units of all signatures (as is the case for the simplest cubic fields), we can multiply any element $ \alpha \in K $ by a suitable unit $ \varepsilon $ such that $ \varepsilon\alpha $ is totally positive. Consequently, any principal ideal is generated by a totally positive element $ \alpha $, and we can write simply $ \Nm(\alpha) $ for its norm.

The discriminant of $ f_a $ is $ \Delta_{f_a} = (a^2+3a+9)^2 $. If $ a^2+3a+9 $ is squarefree, then the discriminant of the field $ K_a $ satisfies $ \Delta_a = \Delta_{f_a} = (a^2+3a+9)^2 $ and $ \O_{K_a} = \Z[\rho] $. This happens for a set of $ a $ with positive density. In this paper, we always assume that $ \O_{K_a} = \Z[\rho] $.

The polynomial $ f_a $ has three real roots $ \rho > |\rho'| > |\rho''| $. Since $ K_a $ is the splitting field of $ f_a $, all the roots lie in $ K_a $. The three conjugates are units of norm $ 1 $ and we have the following estimates for their size.

\begin{lemma}[{\cite[p. 54]{LP}}]
\label{lem:Units}
If $ a \geq 7 $, then
\begin{align*}
	a+1& < \rho < a+1+\frac{2}{a},\\
	-1-\frac{1}{a}& < \rho' < -1-\frac{1}{2a},\\
	-\frac{1}{a+2}& < \rho'' < -\frac{1}{a+3},
\end{align*}
and if $ a \geq 8 $, then
\begin{align*}
	a^2& < \rho^2 < 2a^2,\\
	1& < (\rho')^2 < 2,\\
	\frac{1}{2a^2}& < (\rho'')^2 < \frac{1}{a^2}.
\end{align*}
\end{lemma}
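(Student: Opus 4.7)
The statement gives explicit numerical intervals isolating the three real roots of the monic cubic $f_a(X) = X^3 - aX^2 - (a+3)X - 1$. My plan is to verify each of the six linear bounds by evaluating $f_a$ at the prescribed endpoint and applying the Intermediate Value Theorem, using the sign pattern of a real monic cubic to match the sign of $f_a$ at that endpoint to the correct root.

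First I would fix the real ordering of the roots. Since $\rho > 0$, $\rho', \rho'' < 0$, and $|\rho'| > |\rho''|$, one has $\rho' < \rho'' < 0 < \rho$. Being monic with three real roots, $f_a$ is negative on $(-\infty, \rho')$, positive on $(\rho', \rho'')$, negative on $(\rho'', \rho)$, and positive on $(\rho, \infty)$. Each endpoint $c$ appearing in the lemma lies in one of these four intervals precisely when $f_a(c)$ has the corresponding sign, so every root bound translates into an explicit sign assertion about a rational function of $a$. For instance, $\rho > a+1$ is equivalent to $f_a(a+1) < 0$, which follows from the clean identity $f_a(a+1) = -(2a+3)$, and $\rho < a+1+2/a$ is equivalent to $f_a(a+1+2/a) > 0$, verified by expanding and checking positivity for $a \geq 7$. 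The remaining four linear bounds reduce to the analogous sign checks at the rational points $-1-1/a$, $-1-1/(2a)$, $-1/(a+2)$, and $-1/(a+3)$.

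The squared bounds for $a \geq 8$ are immediate consequences of the linear ones: I would square the linear inequalities termwise and reduce the resulting estimates, such as $(a+1+2/a)^2 < 2a^2$ or $(1+1/a)^2 < 2$, to elementary polynomial inequalities in $a$ that hold for all $a \geq 8$. The only obstacle is the algebraic tedium of substituting these rational endpoints into $f_a$ and tracking the sign of the resulting rational function in $a$; there is no conceptual difficulty beyond a careful application of the Intermediate Value Theorem, which is presumably why \cite{LP} dispatch this lemma by direct calculation.
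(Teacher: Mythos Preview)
Your proposal is correct. The paper does not prove this lemma at all: it is stated with a citation to \cite[p.~54]{LP} and no proof is given. Your Intermediate Value Theorem argument via sign checks of $f_a$ at the prescribed rational endpoints is the standard way to establish such root enclosures, and your identity $f_a(a+1) = -(2a+3)$ is correct; this is almost certainly what \cite{LP} do as well.
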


The group of units $ \O_{K_a}^\times $ is generated by $ \rho $ and $ \rho' $~\cite{Go60, Sh74}. The fact that there are units of all signatures implies that every totally positive unit is a square~\cite[p.~11., Corollary 3]{Na04}. Consequently, the group of totally positive units $ \O_{K_a}^{\times,+} $ is generated by $ \rho^2 $ and $ (\rho')^{2} $.

We use $ A \sqcup B $ to denote disjoint union, i.e., the union of two sets $ A $ and $ B $ such that $ A \cap B = \emptyset $.

\section{Apriori estimate}
\label{sec:Apriori}

In this section, we apply the following estimate from \cite{De19} to the number of principal ideals in the simplest cubic fields.

\begin{lemma}[{\cite[Corollary 2]{De19}}]
\label{lem:Deb}
Let $ K $ be a number field of degree $ d $. Then, for all $ x \geq 1 $,
\[
	\left|I_K(x)-\kappa_Kx\right| \leq d^{10d^2}\left(\Reg_K h_K\right)^{1/d}\left(1+\log\Reg_K h_K\right)^{\frac{(d-1)^2}{d}}x^{1-\frac{1}{d}}.
\]
Moreover, let $ \mathfrak{c} $ be an element of the class group of $ K $. Then,
\[
	\left|P_K(x,\mathfrak{c})-C_Kx\right| \leq d^{10d^2}\left(\Reg_K\right)^{1/d}\left(1+\log\Reg_K\right)^{\frac{(d-1)^2}{d}}x^{1-\frac{1}{d}}.
\]
\end{lemma}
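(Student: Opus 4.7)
The plan is to reconstruct Debaene's explicit estimate from first principles using geometry of numbers. Since the first inequality follows from the second by summing over the $h_K$ ideal classes (replacing $\Reg_K$ by $\Reg_K h_K$ in the error factor and contributing the $h_K^{1/d}$ from the $h_K$-fold summation bound), I concentrate on the bound for $P_K(x,\mathfrak{c})$.

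First, fix an integral ideal $\mathfrak{b} \in \mathfrak{c}^{-1}$ of Minkowski-controlled norm. Principal ideals $(\alpha) \subset \O_K$ in class $\mathfrak{c}$ of norm $\leq x$ are in bijection with $\O_K^\times$-orbits of nonzero $\alpha \in \mathfrak{b}$ satisfying $|\Nm(\alpha)| \leq x\Nm(\mathfrak{b})$. Via the Minkowski embedding $K \hookrightarrow \R^{r_1} \times \Co^{r_2} \cong \R^d$, the ideal $\mathfrak{b}$ becomes a full-rank lattice $\Lambda$ of covolume $2^{-r_2}\sqrt{|\Delta_K|}\Nm(\mathfrak{b})$. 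Construct a fundamental domain $\mathcal{F} \subset \R^d$ for the $\O_K^\times$-action, chosen as a positive-scaling cone whose slice has volume $\mathrm{vol}(\mathcal{F} \cap \{|\Nm| \leq Y\}) = 2^{r_1}(2\pi)^{r_2}\Reg_K\cdot Y/w$ (the classical computation from the Dirichlet unit theorem). Taking $Y = x\Nm(\mathfrak{b})$ and dividing by $\mathrm{covol}(\Lambda)$ produces the main term $C_K\cdot x$ of the desired estimate.

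For the error, I would apply a Lipschitz-parametrization lattice point principle (Widmer's theorem or a variant of Davenport's lemma) to bound the discrepancy $\bigl|\#(\Lambda \cap \mathcal{F} \cap \{|\Nm| \leq Y\}) - \mathrm{vol}/\mathrm{covol}\bigr|$ by the $(d-1)$-dimensional Lipschitz complexity of the boundary, rescaled by $Y^{(d-1)/d}$. After a logarithmic change of coordinates in the unit directions, $\mathcal{F} \cap \{|\Nm| = 1\}$ becomes a fundamental parallelepiped $\mathcal{P}$ for the log-unit lattice of covolume $\Reg_K$; its edges are the logarithmic embeddings of a fundamental system of units. Tracking this through the Lipschitz estimate and the norm-surface scaling gives an error of size $(\Reg_K)^{1/d}(\mathrm{diam}\,\mathcal{P})^{d-1}x^{1-1/d}$, and bounding the diameter contributes the factor $(1+\log\Reg_K)^{(d-1)^2/d}$.

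The main obstacle is explicit bookkeeping of the constant $d^{10d^2}$ and pinning down the log exponent $(d-1)^2/d$. This exponent reflects the worst-case aspect ratio of $\mathcal{P}$: a priori a fundamental system of units may be very far from orthogonal, so although $\mathrm{covol}(\mathcal{P}) = \Reg_K$, the individual edges can be much longer than $\Reg_K^{1/(d-1)}$. One must invoke a quantitative form of Minkowski's successive minima (in the spirit of Schmidt or Bugeaud--Gy\H{o}ry) to produce a fundamental system whose individual heights are controlled by powers of $\log\Reg_K$ rather than merely their product. Executing this step explicitly, and propagating the resulting constants through the iterated Lipschitz estimate across the $d-1$ unit dimensions, is the main technical labor; this is precisely the content of \cite[Corollary 2]{De19}, which I would cite directly.
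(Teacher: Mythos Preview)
The paper does not prove this lemma at all: it is stated purely as a citation of \cite[Corollary~2]{De19} and used as a black box. Your proposal ultimately does the same thing (``which I would cite directly''), so in that sense it matches the paper exactly.

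Your sketch of the underlying argument is a reasonable high-level outline of Debaene's method (reduce to counting lattice points in a Shintani-type fundamental domain, control the boundary via a Lipschitz/Davenport-type principle, and bound the aspect ratio of the log-unit parallelepiped to get the $(1+\log\Reg_K)^{(d-1)^2/d}$ factor), but none of this is needed or reproduced in the present paper. If your goal is to match the paper, simply cite \cite[Corollary~2]{De19} and move on; the sketch, while not wrong in spirit, is extraneous here and does not constitute a self-contained proof of the explicit constant $d^{10d^2}$ or the precise exponent on the logarithm.
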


\begin{lemma}
\label{lem:Reg}
Let $ K_a = \Q(\rho) $ be a simplest cubic field, where $ a \in \Z $, $ a \geq 7 $, such that $ \O_{K_a} = \Z[\rho] $. The regulator of $ K_a $ satisfies
\[
	(\log a)^2 \leq \Reg_a \leq \frac{3}{2}(\log a)^2.
\]
\end{lemma}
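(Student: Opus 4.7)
The plan is to write down the regulator $\Reg_a$ as an explicit $2\times 2$ determinant using the fundamental units $\rho$ and $\rho'$, simplify it using the norm relation on the conjugates of $\rho$, and then read off the two bounds directly from \Cref{lem:Units}.

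More precisely, since $\rho$ and $\rho'$ generate $\O_{K_a}^\times$ modulo $\{\pm 1\}$, and the three real embeddings of $K_a$ permute $\rho, \rho', \rho''$ cyclically (the Galois group is cyclic of order $3$, with $\rho \mapsto \rho' \mapsto \rho'' \mapsto \rho$), the regulator is, after discarding one embedding,
\[
\Reg_a = \left|\det\begin{pmatrix} \log\rho & \log|\rho'| \\ \log|\rho'| & \log|\rho''| \end{pmatrix}\right|.
\]
The constant term of $f_a$ is $-1$, so $\rho\rho'\rho'' = 1$, giving $\log|\rho''| = -\log\rho - \log|\rho'|$. Setting $u = \log\rho$ and $v = \log|\rho'|$, both of which are positive by \Cref{lem:Units}, expansion of the determinant yields
\[
\Reg_a = u^2 + uv + v^2.
\]

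For the lower bound, I drop the non-negative terms $uv + v^2$ and use $\rho > a+1 > a$ from \Cref{lem:Units}, giving $\Reg_a \geq u^2 > (\log a)^2$. For the upper bound, I use $\rho < a+1 + 2/a$ to bound $u \leq \log a + \log(1 + (a+2)/(a^2))$ and $|\rho'| < 1 + 1/a$ to bound $v \leq \log(1+1/a) \leq 1/a$. Substituting these into $u^2 + uv + v^2$ and using elementary inequalities for $a \geq 7$ (where $\log a \geq \log 7 > 1.9$), the cross terms $uv$ and $v^2$ as well as the lower-order pieces of $u^2$ are absorbed into the slack of the factor $\tfrac{3}{2}$ in front of $(\log a)^2$.

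The only place that needs any care is the upper bound, since one has to verify that the $O((\log a)/a)$ error on $u$ and the $O(1/a)$ size of $v$ really do fit inside $\tfrac{1}{2}(\log a)^2$ for every $a \geq 7$; this is where I expect the tightest numerical check (likely at the endpoint $a = 7$). Everything else is a direct calculation from \Cref{lem:Units} and the norm relation.
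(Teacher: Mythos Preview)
Your proposal is correct and follows essentially the same approach as the paper: compute the regulator explicitly from the $2\times 2$ determinant in the fundamental units $\rho,\rho'$ and then bound it using the size estimates from \Cref{lem:Units}, with the tightest case occurring at $a=7$. The only cosmetic difference is that you use the norm relation $\rho\rho'\rho''=1$ to rewrite the determinant as $u^2+uv+v^2$, whereas the paper keeps it as $(\log|\rho'|)^2-(\log\rho)(\log|\rho''|)$ and bounds $\log|\rho''|$ directly; both routes lead to the same endpoint check.
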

\begin{proof}
We have
\[
	\Reg_a = \left|\det\begin{pmatrix}
		\log\rho&\log|\rho'|\\
		\log|\rho'|&\log|\rho''|
	\end{pmatrix}\right| = \left(\log|\rho'|\right)^2-(\log\rho)(\log|\rho''|).
\]
From the estimates on the sizes of units in \Cref{lem:Units}, we get
\[
	\log^2\left(1+\frac{1}{2a}\right)+\log(a+1)\log(a+2) \leq \Reg_a \leq \log^2\left(1+\frac{1}{a}\right)+\log(a+2)\log(a+3).
\]
The function
\[
	g(a) := \frac{\log^2\left(1+\frac{1}{a}\right)+\log(a+2)\log(a+3)}{(\log a)^2}
\]
is decreasing for $ a \in (1,+\infty) $. We assume $ a \geq 7 $, hence
\[
	g(a) \leq g(7) = \frac{\log^2\left(1+\frac{1}{7}\right)+\log(9)\log(10)}{(\log 7)^2} < \frac{3}{2}
\]
and
\[
	\log^2\left(1+\frac{1}{a}\right)+\log(a+2)\log(a+3) \leq \frac{3}{2}(\log a)^2.\qedhere
\]
\end{proof}

\begin{theorem}
\label{thm:Apr}
Let $ K_a = \Q(\rho) $ be a simplest cubic field, where $ a \in \Z $, $ a \geq 7 $. If $ \O_{K_a} = \Z[\rho] $, then
\[
	\left|P(a,x)-C_{K_a}x\right| \leq c_1(\log a)^{10/3}x^{2/3},
\]
where
\[
	c_1 := 3^{900}\left(\frac{3}{2}\right)^{5/3}.
\]
In particular, if $ x \geq c_1^3a^6(\log a)^4 $, then
\[
	P(a,x) \asymp \frac{(\log a)^2}{a^2}x.
\]
\end{theorem}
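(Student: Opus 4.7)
The approach is a direct specialisation of Debaene's estimate (\Cref{lem:Deb}) to the principal class of $K_a$, followed by clean-up of the regulator-dependent factors via \Cref{lem:Reg}. Taking $d = 3$ and $\mathfrak{c}$ the principal class in the second inequality of \Cref{lem:Deb} yields
\[
	|P(a,x) - C_{K_a} x| \leq 3^{90} \, \Reg_a^{1/3} (1+\log \Reg_a)^{4/3} x^{2/3}.
\]
(Any prefactor at least $3^{90}$ is enough; the more generous $3^{900}$ appearing in the statement is harmless.)

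The next step is to absorb $\Reg_a$ into powers of $\log a$. By \Cref{lem:Reg}, $\Reg_a \leq \tfrac{3}{2}(\log a)^2$, so $\Reg_a^{1/3} \leq (3/2)^{1/3}(\log a)^{2/3}$. To avoid picking up a $\log\log a$ from the factor $(1+\log\Reg_a)^{4/3}$, I would apply the elementary inequality $1+\log t \leq t$ (valid for $t \geq 1$, which holds here since $a \geq 7$ implies $\Reg_a \geq (\log a)^2 \geq 1$). This gives $(1+\log\Reg_a)^{4/3} \leq \Reg_a^{4/3} \leq (3/2)^{4/3}(\log a)^{8/3}$, and multiplying the two estimates together produces the target bound $c_1 (\log a)^{10/3} x^{2/3}$ with constant of shape $3^{90}(3/2)^{5/3}$, comfortably dominated by the advertised $c_1 = 3^{900}(3/2)^{5/3}$.

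For the asymptotic consequence, I would make $C_{K_a}$ explicit. Since $K_a$ is totally real cubic with $r_1 = 3$, $r_2 = 0$, $w = 2$, and since the monogenicity assumption forces $\sqrt{|\Delta_a|} = a^2+3a+9$, one has $C_{K_a} = 4\Reg_a/(a^2+3a+9)$. Using $\Reg_a \geq (\log a)^2$ together with $a^2+3a+9 \leq 2a^2$ (valid for $a \geq 5$) yields $C_{K_a} \geq 2(\log a)^2/a^2$. A short verification then shows that $x \geq c_1^3 a^6(\log a)^4$ is exactly the threshold at which the error $c_1(\log a)^{10/3} x^{2/3}$ drops below $C_{K_a} x/2$, whence $P(a,x) \in [C_{K_a} x/2, \, 3C_{K_a} x/2]$, which is $\asymp (\log a)^2 x/a^2$.

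There is no real obstacle here beyond careful bookkeeping of constants: the theorem is a direct specialisation of Debaene's effective bound to the regulator and discriminant behaviour of the simplest cubic fields established in the preliminaries. The only step worth naming is the use of $1+\log t \leq t$, which is what keeps the final error expressible in a pure power of $\log a$ rather than mixed $\log\log a$ terms.
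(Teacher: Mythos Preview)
Your proposal is correct and follows essentially the same route as the paper: specialise \Cref{lem:Deb} with $d=3$, use $1+\log\Reg_a\leq\Reg_a$ to collapse the regulator factors into $\Reg_a^{5/3}$, and then feed in the bounds from \Cref{lem:Reg}. The asymptotic consequence is handled the same way (the paper bounds the error by $(\log a)^2 x/a^2$ directly and obtains $P(a,x)\in[(\log a)^2 x/a^2,\,7(\log a)^2 x/a^2]$, while you phrase it as the error dropping below $C_{K_a}x/2$; these are equivalent once one also records the upper bound $C_{K_a}\leq 6(\log a)^2/a^2$, which follows immediately from the other half of \Cref{lem:Reg}).
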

\begin{proof}
By \Cref{lem:Deb} for the class of principal ideals, we have
\[
	\left|P(a,x)-C_{K_a}x\right| \leq 3^{900}(\Reg_a)^{1/3}\left(1+\log\Reg_a\right)^{4/3}x^{2/3}.
\]
Crudely estimating $ 1+\log\Reg_a \leq \Reg_a $, we get
\[
	\left|P(a,x)-C_{K_a}x\right| \leq 3^{900}(\Reg_a)^{5/3}x^{2/3}.
\]
By \Cref{lem:Reg},
\[
	\left|P(a,x)-C_{K_a}x\right| \leq 3^{900}\left(\frac{3}{2}\right)^{5/3}(\log a)^{10/3}x^{2/3} = c_1(\log a)^{10/3}x^{2/3}.
\]

We have $ r_1 = 3 $, $ r_2 = 0 $, and $ w = 2 $ in the formula for $ C_{K_a} $, hence
\[
	C_{K_a} = \frac{4\Reg_a}{\sqrt{\Delta_a}} = \frac{4\Reg_a}{a^2+3a+9}.
\]
By \Cref{lem:Reg},
\[
	\frac{2(\log a)^2}{a^2} \leq C_{K_a} \leq \frac{6(\log a)^2}{a^2}.
\]
If $ x \geq c_1^3a^6(\log a)^4 $, then
\[
	c_1(\log a)^{10/3}x^{2/3} \leq \frac{(\log a)^2}{a^2}x.
\]
Thus, we get
\[
	P(a,x) \leq C_{K_a}x+c_1(\log a)^{10/3}x^{2/3} \leq \frac{7(\log a)^2}{a^2}x
\]
and
\[
	P(a,x) \geq C_{K_a}x-c_1(\log a)^{10/3}x^{2/3} \geq \frac{(\log a)^2}{a^2}x. \qedhere
\]
\end{proof}

\section{Setting up the counting problem}
\label{sec:Setup}

For the moment, let us work in a totally real field $ K $ of degree $ d $. In order to estimate $ P(a, x) $, we translate the problem of counting principal ideals $ I = (\alpha) $ with norm $ \Nm(I) \leq x $ into the problem of counting lattice points in a certain region of $ \R^d $. In this we closely follow the exposition in~\cite{KT}.

Two integral elements $ \alpha, \beta \in \O_K $ generate the same principal ideal if and only if they are associated, i.e., there exists a unit $ \varepsilon \in \O_K^\times $ such that $ \beta = \varepsilon\alpha $. Here we make the assumption that $ K $ contains units of all signatures. For $ \alpha \in \O_K $, there exists a unit $ \varepsilon \in \O_K^\times $ such that $ \varepsilon\alpha $ is totally positive. Thus, we want to count $ \alpha \in \O_K^+ $ satisfying $ \Nm(\alpha) \leq x $ up to multiplication by totally positive units.

Next, we move the problem into the Minkowski space. Let $ \sigma: K \hookrightarrow \R^d $ be the embedding
\[
	\sigma: \alpha \mapsto (\sigma_1(\alpha), \sigma_2(\alpha), \ldots, \sigma_d(\alpha)).
\]
The elements $ \alpha \in \O_K^+ $ are mapped into $ \R_{>0}^{d} $. The next step is to determine the fundamental domain for the action of multiplication by totally positive units $ \varepsilon \in \O_K^{\times, +} $ on $ \R_{>0}^{d} $. We recall the required terminology: A \emph{simplicial cone} in $ \R_{>0}^{d} $ is $ \calC = \R_{>0}\ell_1+\dots+\R_{>0}\ell_e $, where $ \ell_1, \ldots, \ell_e $ are linearly independent vectors in $ \R^d $. A \emph{polyhedric cone} is a finite disjoint union of simplicial cones. By Shintani's Unit Theorem~\cite[Chapter VII, Theorem 9.3]{Ne99}, it is possible to choose a polyhedric cone $ \P $ to be the fundamental domain. Let $ \varepsilon_1, \ldots, \varepsilon_{d-1} $ be a system of fundamental totally positive units, i.e, generators of $ \O_K^+ $. The polyhedric cone $ \P $ is the union of simplicial cones generated by some of the vectors
\[
	\sigma\left(\prod_{i \in I}\varepsilon_i\right)
\]
for subsets $ I \subset \{1, \ldots, d-1\} $.

Naturally, it is more convenient to work with elements expressed in an integral basis. We define a map $ \tau: K \hookrightarrow \R^d $ by
\[
	\tau: \sum_{i=1}^d x_i\omega_i \mapsto (x_1, \ldots, x_d),
\]
where $ (\omega_1, \ldots, \omega_d) $ is a fixed integral basis of $ K $. The maps $ \sigma $ and $ \tau $ are linear and invertible, hence $ \mathcal{Q} := \tau\sigma^{-1}\P $ is also a polyhedric cone. If $ \alpha_1, \alpha_2, \dots, \alpha_e \in K $, then we let
\[
	\calC(\alpha_1, \ldots, \alpha_e) := \R_{>0} \tau(\alpha_1)+\dots+\R_{>0}\tau(\alpha_e).
\]

In the case of cubic fields, i.e., for $ d = 3 $, we can take (as in \cite[p. 7551]{KT})
\[
	\mathcal{Q} = \calC(1, \varepsilon_1, \varepsilon_2)\sqcup \calC(1, \varepsilon_1, \varepsilon_1\varepsilon_2^{-1})\sqcup \calC(1, \varepsilon_1)\sqcup \calC(1, \varepsilon_2)\sqcup \calC(1, \varepsilon_1\varepsilon_2^{-1})\sqcup \calC(1)
\]
for a suitable pair of units $ (\varepsilon_1,\varepsilon_2) $. In the particular case of the simplest cubic fields, we can choose $ \varepsilon_1 = \rho^2 $ and $ \varepsilon_2 = (\rho'')^{-2} $, hence $ \varepsilon_1\varepsilon_2^{-1} = (\rho')^{-2} $, and
\[
	\calQ = \calC(1, \rho^2, (\rho'')^{-2})\sqcup \calC(1, \rho^2, (\rho')^{-2})\sqcup \calC(1, \rho^2)\sqcup \calC(1, (\rho'')^{-2})\sqcup \calC(1, (\rho')^{-2})\sqcup \calC(1).
\]
The units are expressed in the basis $ (1, \rho, \rho^2) $ as follows:
\begin{align*}
	\tau(\rho^2)& = (0, 0, 1),\\
	\tau((\rho'')^{-2})& = (1, 2, 1),\\
	\tau((\rho')^{-2})& = (-a-1, -(a^2+3a+3), a+2).
\end{align*}
The conclusion of our discussion is that
\[
	P(a, x) = \#\left\{\alpha\in \O_{K_a}^+ : \Nm(\alpha) \leq x, \tau(\alpha) \in \mathcal{Q}\right\}.
\]
What remains is to estimate the number of lattice points in each of the simplicial cones whose union is $ \calQ $.

If $ \calC \subset \R^3 $ is a simplicial cone, then we let
\[
	P(a, x, \calC) := \#\left\{\alpha\in\O_{K_a}^+ : \Nm(\alpha) \leq x, \tau(\alpha) \in \calC\right\}.
\]
\begin{lemma}
\label{lem:C1}
Let $ a \in \Z $, $ a \geq 8 $ and $ x \geq 1 $. We have
\[
	P(a, x, \calC(1)) \asymp x^{1/3}.
\]
If $ \calC \in \left\{\calC(1, \rho^2), \calC(1, (\rho'')^{-2}), \calC(1, (\rho')^{-2})\right\} $, then
\[
	P(a, x, \calC) \ll \left(\frac{x}{a^2}\right)^{2/3}.
\]
\end{lemma}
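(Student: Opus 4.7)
For the first claim, I note that $\calC(1) = \R_{>0}(1,0,0)$, so its lattice points are precisely $(m,0,0)$ with $m \in \Z_{\geq 1}$, corresponding to the rational integer $\alpha = m$. Since $\Nm(m) = m^3$, the condition $\Nm(\alpha) \leq x$ reduces to $1 \leq m \leq x^{1/3}$, giving $P(a, x, \calC(1)) = \lfloor x^{1/3} \rfloor \asymp x^{1/3}$ for $x \geq 1$.

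For each of the two-dimensional cones $\calC(1, \varepsilon)$ with $\varepsilon \in \{\rho^2, (\rho'')^{-2}, (\rho')^{-2}\}$, I first plan to describe the lattice points. For $\calC(1, \rho^2)$ and $\calC(1, (\rho'')^{-2})$ this follows immediately from the explicit coordinates of the generators. For $\calC(1, (\rho')^{-2})$, I use the identity $a^2 + 3a + 3 = (a+2)(a+1) + 1$ to conclude that $\gcd(a+2, a^2+3a+3) = 1$, which forces the coefficient along the second generator to be an integer. In all three cases the lattice points in $\calC(1, \varepsilon)$ are exactly $\alpha = m + s\varepsilon$ with $(m,s) \in \Z_{\geq 1}^2$. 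Since $\varepsilon$ is a totally positive unit whose Galois conjugates, by \Cref{lem:Units}, have sizes (in some order) $\asymp a^2$, $\asymp 1$, $\asymp 1/a^2$, one obtains
\[
\Nm(m + s\varepsilon) \asymp (m + sa^2)(m + s)(m + s/a^2).
\]

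It therefore suffices to show that, for any fixed $C > 0$, the number of $(m,s) \in \Z_{\geq 1}^2$ with $(m+sa^2)(m+s)(m+s/a^2) \leq Cx$ is $\ll (x/a^2)^{2/3}$. My plan is to partition $\Z_{\geq 1}^2$ into four regions according to how $m$ compares with $s/a^2$, $s$, and $sa^2$; in each region, every factor reduces to a single monomial in $m$, $s$, $a$. The two \emph{extreme} regions ($m \geq sa^2$, yielding $m^3 \lesssim x$; and $m \leq s/a^2$, yielding $s^3 \lesssim x$) each contribute $\ll x^{2/3}/a^2$, using that the outer variable is confined to an interval of length $\ll x^{1/3}/a^2$. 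The two \emph{balanced} regions yield the constraints $a^2 m^2 s \lesssim x$ (for $s \leq m < sa^2$) and $a^2 m s^2 \lesssim x$ (for $s/a^2 \leq m < s$); summing over one variable and splitting the other at its balance point $\asymp (x/a^2)^{1/3}$ gives $\ll (x/a^2)^{2/3}$ in each. Since $x^{2/3}/a^2 \leq (x/a^2)^{2/3}$ for $a \geq 1$, all four contributions combine into the desired bound. The main obstacle is purely bookkeeping in this casework; no idea beyond elementary sum-integral comparison is required.
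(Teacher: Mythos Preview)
Your argument is correct, and the core counting is identical to the paper's: both end up bounding $\#\{(m,s)\in\Z_{\geq 1}^2 : a^2m^2s\lesssim x,\ a^2ms^2\lesssim x\}$ by $\ll (x/a^2)^{2/3}$ via the split $m\lessgtr s$ and the sum $\sum_{m\leq (x/a^2)^{1/3}} (x/(a^2m))^{1/2}$. Your four-region decomposition simply packages the same two monomial constraints (your ``balanced'' regions) together with the easy extreme tails.

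The one genuine difference is how the three cones are unified. You parametrise the lattice points in each $\calC(1,\varepsilon)$ separately (using the $\gcd$ observation for $\varepsilon=(\rho')^{-2}$) and then note that the conjugates of $\varepsilon$ have sizes $\asymp a^2,1,1/a^2$ in some order, so the norm has the same shape in all three cases. The paper instead treats only $\calC(1,\rho^2)$ directly and then invokes the Galois bijections $\alpha\mapsto\rho^2\alpha'$ and $\alpha\mapsto\rho^2\alpha''$, which map the other two cones onto $\calC(1,\rho^2)$ and preserve norms, yielding literally $P(a,x,\calC(1,\rho^2))=P(a,x,\calC(1,(\rho'')^{-2}))=P(a,x,\calC(1,(\rho')^{-2}))$. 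Your route is perfectly fine and self-contained; the paper's Galois trick is a bit slicker and avoids the separate lattice-point check for $\calC(1,(\rho')^{-2})$.
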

\begin{proof}
If $ \alpha \in \O_{K_a}^+ $, then we let $ \tau(\alpha) = (m, n, o) \in \Z^3 $. We begin by counting the lattice points in $ \calC(1) $. If $ \tau(\alpha) \in \calC(1) $, then there exists $ t \in \R_{>0} $ such that
\[
	(m, n, o) = t(1, 0, 0),
\]
that is, $ \alpha = m $. The number of integers $ m \in \Z_{\geq 1} $ such that $ \Nm(m) \leq x $ is $ \asymp x^{1/3} $.

Next, we let $ \calC = \calC(1, \rho^2) $ and estimate $ P(a, x, \calC) $ from above. If $ \tau(\alpha) \in \calC(1, \rho^2) $, then there exist $ t_1, t_2 \in \R_{>0} $ such that
\[
	(m, n, o) = t_1(1, 0, 0)+t_2(0, 0, 1).
\]
Thus, $ \alpha = m+o\rho^2 $, where $ m, o \in \Z_{\geq 1} $. In view of the bounds for the sizes of units in \Cref{lem:Units}, we have
\begin{align*}
	\alpha& > m+oa^2,\\
	\alpha'& > m+o,\\
	\alpha''& > m+\frac{o}{2a^2}.
\end{align*}
If $ \Nm(\alpha) \leq x $, then
\[
	(m+oa^2)(m+o)\left(m+\frac{o}{2a^2}\right) \leq x.
\]
Since all the terms on the left side are positive, we get in particular $ a^2m^2o \leq x $ and $ a^2mo^2 \leq x $. Therefore
\[
	P(a, x, \calC) \leq \#M(a, x)
\]
where
\[
	M(a, x) := \left\{(m, o) \in \Z_{\geq 1}^2 : a^2m^2o \leq x, a^2mo^2 \leq x\right\}.
\]
Let $ (m, o) \in M(a, x) $. If $ m \leq o $, then $ a^2m^3 \leq x $, hence $ m \leq (x/a^2)^{1/3} $. Since $ o \leq (x/a^2)^{1/2}m^{-1/2} $, we get
\[
	 \#\left\{(m, o) \in M(a, x), m \leq o\right\} \leq \sum_{m \leq (a^2/x)^{1/3}} \left(\frac{x}{a^2}\right)^{1/2}m^{-1/2} \ll \left(\frac{x}{a^2}\right)^{2/3}.
\]
If $ m \geq o $, the estimate is the same by symmetry, and hence
\[
	P(a, x, \calC) \ll \left(\frac{x}{a^2}\right)^{2/3}.
\]
Next, we show that counting the lattice points in $ \calC(1, (\rho'')^{-2}) $ and $ \calC(1, (\rho')^{-2}) $ reduces to the preceding case. If $ \alpha \in \O_{K_a}^+ $ satisfies $ \tau(\alpha) \in \calC(1, (\rho'')^{-2}) $, then there exist $ t_1, t_2 \in \R_{>0} $ such that $ \alpha = t_1+t_2(\rho'')^{-2} $, and hence
\[
	\alpha' = t_1+t_2\rho^{-2} \Longrightarrow \rho^2\alpha' = t_1\rho^2+t_2 \Longrightarrow \tau(\rho^2\alpha') \in \calC(1, \rho^2).
\]
Conversely, $ \tau(\rho^2\alpha') \in \calC(1, \rho^2) $ implies $ \tau(\alpha) \in \calC(1, (\rho'')^{-2}) $. By similar reasoning, $ \tau(\alpha) \in \calC(1, (\rho')^{-2}) $ if and only if $ \tau(\rho^2\alpha'') \in \calC(1, \rho^2) $. This proves
\[
	P(a, x, \calC(1, \rho^2)) = P(a, x, \calC(1, (\rho'')^{-2})) = P(a, x, \calC(1, (\rho')^{-2})).\qedhere
\]
\end{proof}

\begin{lemma}
\label{lem:C2}
Let $ a \in \Z $, $ a \geq 8 $, and $ r > 0 $. If $ \calC = \calC(1, \rho^2, (\rho'')^{-2}) $ and $ 1 \leq x \leq a^r $, then
\[
	P(a, x, \calC) \ll_r \frac{(\log a)^2 x}{a^2}.
\]
The constant implied in the estimate depends only on $ r $.
\end{lemma}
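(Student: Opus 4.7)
The plan is to parametrize elements of the cone so that a clean lower bound on the norm emerges in terms of three positive integers, then reduce to the classical estimate for the ternary divisor function.

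I would write $\alpha = t_1 + t_2\rho^2 + t_3(\rho'')^{-2}$ with $t_i > 0$. Comparing with $\alpha = m + n\rho + o\rho^2 \in \O_{K_a}$ and using $\tau((\rho'')^{-2}) = (1,2,1)$ gives $t_1 = m - n/2$, $t_2 = o - n/2$, $t_3 = n/2$. Using the size estimates of \Cref{lem:Units} and discarding a single positive term from each conjugate of $\alpha$,
\[
	\sigma_1(\alpha) > a^2(t_2 + t_3), \qquad \sigma_2(\alpha) > t_1 + t_2, \qquad \sigma_3(\alpha) > t_1 + t_3/2.
\]
Since $t_1 + t_3/2 \geq (t_1+t_3)/2$, multiplying produces the key inequality
\[
	\Nm(\alpha) > \tfrac{a^2}{2}(t_1+t_2)(t_1+t_3)(t_2+t_3).
\]

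Then I would change variables to $(u_1, u_2, u_3) := (t_2+t_3,\, t_1+t_2,\, t_1+t_3)$, which in the $(m,n,o)$ coordinates reads $(u_1, u_2, u_3) = (o,\, m+o-n,\, m)$. The induced linear map $\Z^3 \to \Z^3$ has determinant $\pm 1$, hence is a bijection, and the positivity of the $t_i$ combined with $u_i \in \Z$ forces $u_i \geq 1$. Dropping the triangle inequalities that the $u_i$ would otherwise satisfy, one obtains
\[
	P(a, x, \calC) \leq \#\bigl\{(u_1, u_2, u_3) \in \Z_{\geq 1}^3 : u_1 u_2 u_3 \leq 2x/a^2\bigr\} = \sum_{n \leq 2x/a^2} d_3(n),
\]
where $d_3$ is the ternary divisor function.

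The classical bound $\sum_{n \leq N} d_3(n) \ll N(\log(N+2))^2$, together with the hypothesis $x \leq a^r$ (so that $\log(x/a^2 + 2) \ll_r \log a$), then yields $P(a, x, \calC) \ll_r (\log a)^2 x/a^2$, as required. The only mild subtlety is the small-$x$ range where $2x/a^2 < 1$: there the set on the right-hand side is empty and the bound is trivial. I do not foresee a serious obstacle—the argument is a clean change of variables followed by the standard divisor estimate; what makes it work is that the three factors $t_i+t_j$ exactly match the three slots of a ternary divisor sum after the unimodular change of coordinates.
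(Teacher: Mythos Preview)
Your proof is correct and somewhat cleaner than the paper's. Both arguments start from the same parametrization $t_1 = m - n/2$, $t_2 = o - n/2$, $t_3 = n/2$ and both reduce to a product inequality in three positive integers bounded by $\asymp x/a^2$, then finish with a double (or triple) harmonic sum giving $(\log a)^2$. The difference is in the choice of integer coordinates. The paper writes $m = \lfloor n/2\rfloor + m_1$, $o = \lfloor n/2\rfloor + o_1$ with $m_1,o_1,n \in \Z_{\geq 1}$, derives $a^2 m_1 o_1 n \leq 16x$ together with auxiliary bounds $m_1^3 \leq 8x$, $o_1^3 \leq 16x$, and sums $\sum_{m_1,o_1}\frac{x}{a^2 m_1 o_1}$. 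You instead notice that $(t_2+t_3,\,t_1+t_2,\,t_1+t_3) = (o,\,m+o-n,\,m)$ is a \emph{unimodular} change of variables on $\Z^3$, so the three factors are themselves positive integers and one lands immediately on $\sum_{k\leq 2x/a^2} d_3(k)$. Your route avoids the floor functions and the side conditions $m_1^3,o_1^3 \ll x$, and makes the structure as a ternary divisor sum transparent; the paper's route is more hands-on but otherwise equivalent. Either way the hypothesis $x\leq a^r$ enters only at the very end to replace $\log x$ by $\log a$.
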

\begin{proof}
If $ \alpha \in \O_{K_a}^+ $, then we let $ \tau(\alpha) = (m, n, o) \in \Z^3 $. If $ \alpha \in \calC $, then there exist $ t_1, t_2, t_3 \in \R_{>0} $ such that
\[
	(m, n, o) = \tau(\alpha) = t_1\tau(1)+t_2\tau(\rho^2)+t_3\tau((\rho'')^{-2}) = t_1(1,0,0)+t_2(0,0,1)+t_3(1,2,1).
\]
Thus, $ m = t_1+t_3 $, $ n = 2t_3 $, and $ o = t_2+t_3 $, which implies $ t_3 = \frac{n}{2} $, $ t_1 = m-\frac{n}{2} $, and $ t_2 = o-\frac{n}{2} $. Since $ t_3>0 $, we get $ n \in \Z_{\geq 1} $. Write $ m = \lfloor\frac{n}{2}\rfloor+m_1 $ and $ o = \lfloor\frac{n}{2}\rfloor+o_1 $, where $ m_1, o_1 \in \Z $. Now
\begin{align*}
	t_1& = m-\frac{n}{2} = m_1-\left(\frac{n}{2}-\left\lfloor\frac{n}{2}\right\rfloor\right) \geq m_1-\frac{1}{2},\\
	t_2& = o-\frac{n}{2} = o_1-\left(\frac{n}{2}-\left\lfloor\frac{n}{2}\right\rfloor\right) \geq o_1-\frac{1}{2}.
\end{align*}
Since $ t_1>0 $ and $ t_2>0 $, we have $ m_1, o_1 \in \Z_{\geq 1} $, and $ t_1 \geq \frac{m_1}{2} $, $ t_2 \geq \frac{o_1}{2} $.

Using the bounds for the sizes of the units in \Cref{lem:Units}, we get
\begin{align*}
	\alpha& = t_1+t_2\rho^2+t_3(\rho'')^{-2} > t_1+t_2a^2+t_3a^2 \geq \frac{m_1}{2}+\frac{o_1a^2}{2}+\frac{na^2}{2},\\
	\alpha'& = t_1+t_2(\rho')^2+t_3\rho^{-2} > t_1+t_2+\frac{t_3}{2a^2} \geq \frac{m_1}{2}+\frac{o_1}{2}+\frac{n}{4a^2},\\
	\alpha''& = t_1+t_2(\rho'')^2+t_3(\rho')^{-2} > t_1+\frac{t_2}{2a^2}+\frac{t_3}{2} \geq \frac{m_1}{2}+\frac{o_1}{4a^2}+\frac{n}{4}.
\end{align*}

If $ \Nm(\alpha) \leq x $, then
\[
	\left(\frac{m_1}{2}+\frac{o_1a^2}{2}+\frac{na^2}{2}\right)\left(\frac{m_1}{2}+\frac{o_1}{2}+\frac{n}{4a^2}\right)\left(\frac{m_1}{2}+\frac{o_1}{4a^2}+\frac{n}{4}\right) \leq x.
\]
In particular, $ m_1^3 \leq 8x $, $ o_1^3 \leq 16x $, and $ a^2o_1m_1n \leq 16x $. Thus,
\[
	P(a, x, \calC) \leq M(a, x),
\]
where
\[
	M(a, x) := \#\left\{(m_1, n, o_1) \in \Z_{\geq 1}^3 : m_1^3 \leq 8x, o_1^3 \leq 16x, a^2o_1m_1n \leq 16x\right\}.
\]

We estimate
\[
	M(a, x) \leq \sum_{1 \leq m_1 \leq (8x)^{1/3}}\sum_{1 \leq o_1 \leq (16x)^{1/3}}\frac{16x}{a^2m_1o_1} \ll \frac{x}{a^2}\log(8x)\log(16x) \ll_r \frac{(\log a)^2 x}{a^2}.\qedhere
\]
\end{proof}

\section{Counting lattice points in a three-dimensional cone}
\label{sec:Count}

In \Cref{sec:Setup}, we gave upper bounds for $ P(a,x,\calC) $ when $ \calC $ is one of the cones $ \calC(1) $, $ \calC(1,\rho^2) $, $ \calC(1,(\rho'')^{-2}) $, $ \calC(1,(\rho')^{-2}) $, and $ \calC(1,\rho^2, (\rho'')^{-2}) $. Obtaining an upper bound for $ \calC(1, \rho^2, (\rho')^{-2}) $ is significantly harder. In this section, we lay the groundwork for a calculation which will be completed in \Cref{sec:UB}.

From now on, let $ \calC := \calC(1, \rho^2, (\rho')^{-2}) $. Let $ \alpha \in \O_{K_a}^+ $ and $ \tau(\alpha) = (m, n, o) \in \Z^3 $. If $ \tau(\alpha) \in \calC $, then
\[
	(m, n, o) = t_1(1,0,0)+t_2(0,0,1)+t_3(-a-1,-(a^2+3a+3),a+2).
\]
Thus, $ m = t_1-t_3(a+1) $, $ n = -t_3(a^2+3a+3) $, and $ o = t_2+t_3(a+2) $. We set $ n = -w $, where $ w \in \Z_{\geq 1} $ to get
\begin{align*}
	t_3& = \frac{w}{a^2+3a+3},\\
	t_1& = m+\frac{a+1}{a^2+3a+3}w,\\
	t_2& = o-\frac{a+2}{a^2+3a+3}w.
\end{align*}

We introduce the following pieces of notation. For $ a \in \Z_{\geq 1} $ and $ w \in \Z_{\geq 1} $, let
\begin{align*}
	m_0(a, w)& := \left\lfloor -\frac{a+1}{a^2+3a+3}w \right\rfloor,\\
	o_0(a, w)& := \left\lfloor \frac{a+2}{a^2+3a+3}w \right\rfloor.
\end{align*}

Since $ a^2+3a+3 = (a+1)(a+2)+1 $, we can express every $ w \in \Z_{\geq 1} $ uniquely as
\[
	w = (a^2+3a+3)w_0+(a+1)w_1+w_2,
\]
where $ w_0, w_1, w_2 \in \Z_{\geq 0} $, $ w_1 \leq a+2 $, $ w_2 \leq a $, and $ w_2 = 0 $ if $ w_1 = a+2 $. Let
\[
	W_1(a) := \left\{(w_0, w_1, w_2) \in \Z_{\geq 0}^3 : w_1 \leq a+2, w_2 \leq a, w_2 = 0\text{ if }w_1 = a+2\right\}
\]
and define the map
\[
	\phi_a: \Z_{\geq 1} \to W_1(a), w \mapsto (w_0, w_1, w_2),
\]
where $ w = (a^2+3a+3)w_0+(a+1)w_1+w_2 $.

Similarly, every $ w \in \Z_{\geq 1} $ can be uniquely expressed as
\[
	w =(a^2+3a+3)w_0+(a+2)w_1+w_2,
\]
where $ w_0, w_1, w_2 \in \Z_{\geq 0} $, $ w_1 \leq a+1 $, $ w_2 \leq a+1 $, and $ w_2 = 0 $ if $ w_1 = a+1 $. Let
\[
	W_2(a) := \left\{(w_0, w_1, w_2) \in \Z_{\geq 0}^3 : w_1 \leq a+1, w_2 \leq a+1, w_2 = 0\text{ if }w_1 = a+1\right\}
\]
and
\[
	\psi_a: \Z_{\geq 1} \to W_2(a), w \mapsto (w_0, w_1, w_2),
\]
where $ w = (a^2+3a+3)w_0+(a+2)w_1+w_2 $.

If $ S \subset \Z^5 $, then we let
\begin{align*}
	\Phi(a, S) := \left\{(m, w, o) \in \Z^3 : m = m_0(a, w)+m_1, o = o_0(a, w)+o_1, w \in \Z_{\geq 1}, (m_1, o_1, \phi_a(w)) \in S\right\},\\
	\Psi(a, S) := \left\{(m, w, o) \in \Z^3 : m = m_0(a, w)+m_1, o = o_0(a, w)+o_1, w \in \Z_{\geq 1}, (m_1, o_1, \psi_a(w)) \in S\right\},
\end{align*}
and
\begin{align*}
	Q(a, S) := \left\{\alpha \in \O_{K_a}^+ : \tau(\alpha) = (m, n, o) \in \calC, (m, -n, o) \in \Phi(a,S)\right\},\\
	R(a, S) := \left\{\alpha \in \O_{K_a}^+ : \tau(\alpha) = (m, n, o) \in \calC, (m, -n, o) \in \Psi(a,S)\right\}.
\end{align*}

We also let
\begin{align*}
	S_1(a)& := \left\{(m_1, o_1, w_0, w_1, w_2) \in \Z^2\times W_1(a) : m_1 \geq 2, o_1 \geq 2\right\},\\
	S_2(a)& := \left\{(m_1, o_1, w_0, w_1, w_2) \in \Z^2\times W_1(a) : m_1 \geq 2, o_1 = 1\right\},\\
	S_3(a)& := \left\{(m_1, o_1, w_0, w_1, w_2) \in \Z^2\times W_2(a) : m_1 = 1, o_1 \geq 2\right\},\\
	S_4(a)& := \left\{(m_1, o_1, w_0, w_1, w_2) \in \Z^2\times W_1(a) : m_1 = 1, o_1 = 1\right\}.
\end{align*}

\begin{lemma}
\label{lem:Estimates}
Let $ a \in \Z $, $ a \geq 8 $, and $ x > 0 $. Let $ t_1, t_2, t_3 \in \R_{>0} $ and $ \alpha = t_1+t_2\rho^2+t_3(\rho')^{-2} $. If $ \Nm(\alpha) \leq x $, then
\[
	t_1^3 < x,\ t_2^3 < x,\ a^2t_1t_2^2 < x,\ a^2t_1^2t_3 < x,\ a^2t_2t_3^2 < 2x,\ a^4t_1t_2t_3 < x.
\]
\end{lemma}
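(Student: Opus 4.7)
The plan is to write $\Nm(\alpha) = \alpha\alpha'\alpha''$, where the three conjugates under the Galois automorphism sending $\rho\mapsto\rho'\mapsto\rho''\mapsto\rho$ are
\[
\alpha = t_1 + t_2\rho^2 + t_3(\rho')^{-2},\quad \alpha' = t_1 + t_2(\rho')^2 + t_3(\rho'')^{-2},\quad \alpha'' = t_1 + t_2(\rho'')^2 + t_3\rho^{-2}.
\]
Because $t_1,t_2,t_3 > 0$ and each of the powers $\rho^{\pm 2}$, $(\rho')^{\pm 2}$, $(\rho'')^{\pm 2}$ is positive, every factor is a sum of three strictly positive summands. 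Hence, for any choice of one summand from each factor, the product of the three chosen summands is strictly less than $\Nm(\alpha) \leq x$. The six inequalities are then established by picking the right summands and invoking \Cref{lem:Units} together with Vieta's identity $\rho\rho'\rho'' = 1$ (read off the constant term of $f_a$).

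Concretely, I would choose: $(t_1, t_1, t_1)$ for $t_1^3 < x$; the $t_2$-summands from each factor for $t_2^3 < x$, whose product collapses to $t_2^3 (\rho\rho'\rho'')^2 = t_2^3$; $(t_2\rho^2, t_2(\rho')^2, t_1)$ for $a^2 t_1 t_2^2 < x$, using $\rho^2 > a^2$ and $(\rho')^2 > 1$; $(t_1, t_3(\rho'')^{-2}, t_1)$ for $a^2 t_1^2 t_3 < x$, using $(\rho'')^{-2} > a^2$; $(t_2\rho^2, t_3(\rho'')^{-2}, t_3\rho^{-2})$ for $a^2 t_2 t_3^2 < 2x$, whose product simplifies to $t_2 t_3^2(\rho'')^{-2} > a^2 t_2 t_3^2$; and $(t_2\rho^2, t_3(\rho'')^{-2}, t_1)$ for $a^4 t_1 t_2 t_3 < x$, using $\rho^2(\rho'')^{-2} > a^4$.

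There is no real obstacle. The only subtle point is the bound $t_2^3 < x$: naively estimating via $\rho^2 > a^2$, $(\rho')^2 > 1$, and $(\rho'')^2 > 1/(2a^2)$ would lose a factor of $2$, so one must instead exploit the identity $\rho\rho'\rho'' = 1$ to cancel the $a$-factors exactly. Once the right term is selected in each factor, the verification is a one-line calculation for each of the six claims.
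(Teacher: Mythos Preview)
Your proposal is correct and follows essentially the same approach as the paper: write $\Nm(\alpha)=\alpha\alpha'\alpha''$ as a product of three positive three-term sums and pick one summand from each factor. The paper first replaces each conjugate by a numerical lower bound via \Cref{lem:Units} and then selects terms, whereas you select the exact summands first and then invoke the unit bounds (or the identity $\rho\rho'\rho''=1$); the two arguments are interchangeable, and your route even yields the slightly sharper $a^2t_2t_3^2<x$ in place of $<2x$.
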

\begin{proof}
By the estimates for the sizes of units in \Cref{lem:Units},
\begin{align*}
	\alpha& = t_1+t_2\rho^2+t_3(\rho')^{-2} > t_1+t_2a^2+\frac{t_3}{2},\\
	\alpha'& = t_1+t_2(\rho')^2+t_3(\rho'')^{-2} > t_1+t_2+t_3a^2,\\
	\alpha''& = t_1+t_2(\rho'')^2+t_3\rho^{-2} > t_1+\frac{t_2}{2a^2}+\frac{t_3}{2a^2}.
\end{align*}
Assume $ \Nm(\alpha) = \alpha\alpha'\alpha'' \leq x $. First of all, $ t_1^3 < \alpha\alpha'\alpha'' \leq x $ and
\[
	t_2^3 = t_2\rho^2\cdot t_2(\rho')^2\cdot t_2(\rho'')^2 < \alpha\alpha'\alpha'' \leq x.
\]
Secondly,
\[
	\left(t_1+a^2t_2+\frac{t_3}{2}\right)\left(t_1+t_2+a^2t_3\right)\left(t_1+\frac{t_2}{2a^2}+\frac{t_3}{2a^2}\right) < x.
\]
Since all the terms are positive, we get all the other stated inequalities.
\end{proof}

\begin{lemma}
\label{lem:W1}
Let $ a \in \Z_{\geq 1} $, $ w \in \Z_{\geq 1} $, and $ (w_0, w_1, w_2) = \phi_a(w) $. We have
\begin{align*}
	\left\lfloor-\frac{a+1}{a^2+3a+3}w\right\rfloor = -(a+1)w_0-w_1+\begin{cases}
		1,&\text{if }w_1 = a+2,\\
		0,&\text{if }w_1 \leq a+1\text{ and }w_1 \geq w_2,\\
		-1,&\text{if }w_1 \leq a+1\text{ and }w_1 < w_2.
	\end{cases}
\end{align*}
and
\begin{align*}
	-\frac{a+1}{a^2+3a+3}w-\left\lfloor-\frac{a+1}{a^2+3a+3}w\right\rfloor = \frac{(a+2)w_1-(a+1)w_2}{a^2+3a+3}+\begin{cases}
		-1,&\text{if }w_1 = a+2,\\
		0,&\text{if }w_1 \leq a+1\text{ and }w_1 \geq w_2,\\
		1,&\text{if }w_1 \leq a+1\text{ and }w_1 < w_2.
	\end{cases}
\end{align*}
\end{lemma}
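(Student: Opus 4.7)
The plan is to substitute the normal form $w = (a^2+3a+3)w_0 + (a+1)w_1 + w_2$ directly into $-\frac{(a+1)}{a^2+3a+3}w$ and simplify. The key algebraic identity is $(a+1)^2 = (a^2+3a+3) - (a+2)$, which comes from $a^2+3a+3 = (a+1)(a+2)+1$. Using it, I would compute
\[
-\frac{(a+1)w}{a^2+3a+3} = -(a+1)w_0 - \frac{(a+1)^2 w_1 + (a+1)w_2}{a^2+3a+3} = -(a+1)w_0 - w_1 + r,
\]
where $r := \dfrac{(a+2)w_1 - (a+1)w_2}{a^2+3a+3}$. Since the first two summands are integers, the computation of the floor reduces to determining $\lfloor r \rfloor$, and the second identity in the lemma then follows immediately by subtracting the floor from $-\frac{(a+1)w}{a^2+3a+3}$.

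Next I would bound $r$ by splitting into the three cases prescribed by $\phi_a$. When $w_1 = a+2$ (forcing $w_2 = 0$), a direct computation gives $r = \frac{(a+2)^2}{a^2+3a+3} = 1 + \frac{a+1}{a^2+3a+3}$, which lies in $(1,2)$, so $\lfloor r \rfloor = 1$. When $w_1 \leq a+1$, I would use $w_2 \leq a$ to estimate the numerator of $r$: its maximum is attained at $(w_1,w_2)=(a+1,0)$ giving $(a+1)(a+2) = a^2+3a+2 < a^2+3a+3$, and its minimum at $(0,a)$ giving $-a(a+1) > -(a^2+3a+3)$. Hence $r \in (-1,1)$, so $\lfloor r \rfloor \in \{-1, 0\}$ depending only on the sign of the numerator.

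The remaining step, which I expect to be the slightly subtle one, is to replace the condition ``$(a+2)w_1 \geq (a+1)w_2$'' by the cleaner ``$w_1 \geq w_2$'' stated in the lemma. One direction is easy: $w_1 \geq w_2$ implies $(a+2)w_1 \geq (a+2)w_2 \geq (a+1)w_2$. For the converse, rewrite $(a+2)w_1 - (a+1)w_2 = (a+1)(w_1-w_2) + w_1$; if $w_1 < w_2$ then $w_2 - w_1 \geq 1$ and we would need $w_1 \geq (a+1)(w_2-w_1) \geq a+1$, which together with $w_1 \leq a+1$ forces $w_1 = a+1$ and $w_2 \leq a+2$; but $w_2 \leq a$, so $w_2 \leq w_1$, a contradiction. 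Thus under the constraints defining $W_1(a)$ the two conditions coincide, which completes the case analysis and establishes both displayed formulas. The computation is short and essentially routine once the algebraic identity above is in hand; the only place where one must be careful is tracking the constraints $w_1 \leq a+2$, $w_2 \leq a$, and the boundary rule $w_2 = 0$ when $w_1 = a+2$.
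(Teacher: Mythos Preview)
Your proof is correct and follows essentially the same approach as the paper: the same decomposition $-\frac{(a+1)w}{a^2+3a+3}=-(a+1)w_0-w_1+r$ with $r=\frac{(a+2)w_1-(a+1)w_2}{a^2+3a+3}$, the same case split, and the same bounds placing $r$ in $(1,2)$ when $w_1=a+2$ and in $(-1,1)$ when $w_1\le a+1$. The only cosmetic difference is in the sign analysis: the paper checks $\bigl\lfloor\frac{a+2}{a+1}w_1\bigr\rfloor$ to reduce $(a+2)w_1\ge(a+1)w_2$ to $w_1\ge w_2$, whereas your rewrite $(a+2)w_1-(a+1)w_2=(a+1)(w_1-w_2)+w_1$ achieves the same end slightly more directly.
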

\begin{proof}
We have
\begin{align*}
	-\frac{a+1}{a^2+3a+3}w& = -(a+1)w_0-\frac{(a+1)^2}{a^2+3a+3}w_1-\frac{a+1}{a^2+3a+3}w_2\\
	& = -(a+1)w_0-w_1+\left(\frac{a+2}{a^2+3a+3}w_1-\frac{a+1}{a^2+3a+3}w_2\right).
\end{align*}

If $ w_1 = a+2 $, then $ w_2 = 0 $, and the expression in the bracket equals
\[
	\frac{a+2}{a^2+3a+3}w_1 = \frac{(a+2)^2}{a^2+3a+3} \in (1, 2),
\]
hence
\[
	\left\lfloor-\frac{a+1}{a^2+3a+3}w\right\rfloor = -(a+1)w_0-w_1+1
\]
and
\[
	-\frac{a+1}{a^2+3a+3}w-\left\lfloor-\frac{a+1}{a^2+3a+3}w\right\rfloor = \frac{(a+2)^2}{a^2+3a+3}-1 = \frac{a+1}{a^2+3a+3}.
\]

If $ w_1 \leq a+1 $, then
\[
	-1 < -\frac{(a+1)a}{a^2+3a+3} \leq \frac{a+2}{a^2+3a+3}w_1-\frac{a+1}{a^2+3a+3}w_2 \leq \frac{a+2}{a^2+3a+3}w_1 \leq \frac{(a+2)(a+1)}{a^2+3a+3} < 1.
\]

Moreover,
\[
	\frac{a+2}{a^2+3a+3}w_1-\frac{a+1}{a^2+3a+3}w_2 \geq 0 \Longleftrightarrow \frac{a+2}{a+1}w_1 \geq w_2 \Longleftrightarrow \left\lfloor\frac{a+2}{a+1}w_1\right\rfloor \geq w_2.
\]

We note that
\[
	\left\lfloor\frac{a+2}{a+1}w_1\right\rfloor = \left\lfloor w_1+\frac{w_1}{a+1}\right\rfloor = \begin{cases}
		w_1+1,&\text{if }w_1 = a+1,\\
		w_1,&\text{if }w_1 < a+1.
	\end{cases}
\]
Since $ w_2 \leq a $, we get
\[
	\frac{a+2}{a^2+3a+3}w_1-\frac{a+1}{a^2+3a+3}w_2 \geq 0 \Longleftrightarrow w_1 \geq w_2.
\]
Thus,
\[
	\left\lfloor-\frac{a+1}{a^2+3a+3}w\right\rfloor = -(a+1)w_0-w_1+\begin{cases}
		0,&\text{if }w_1 \geq w_2,\\
		-1,&\text{if }w_1 < w_2
	\end{cases}
\]
and
\[
	-\frac{a+1}{a^2+3a+3}w-\left\lfloor-\frac{a+1}{a^2+3a+3}w\right\rfloor = \frac{(a+2)w_1-(a+1)w_2}{a^2+3a+3}+\begin{cases}
		0,&\text{if }w_1 \geq w_2,\\
		1,&\text{if }w_1 < w_2.
	\end{cases}\qedhere
\]
\end{proof}

\begin{lemma}
\label{lem:W2}
Let $ a \in \Z_{\geq 1} $, $ w \in \Z_{\geq 1} $, and $ (w_0, w_1, w_2) = \phi_a(w) $. We have
\[
	\left\lfloor\frac{a+2}{a^2+3a+3}w\right\rfloor = (a+2)w_0+w_1+\begin{cases}
		0,&\text{if }w_1 = w_2 = 0,\\
		0,&\text{if }w_2 \geq 1,\\
		-1,&\text{if }w_1 \geq 1, w_2 = 0\\
	\end{cases}
\]
and
\[
	\frac{a+2}{a^2+3a+3}w-\left\lfloor\frac{a+2}{a^2+3a+3}w\right\rfloor = \begin{cases}
		0,&\text{if }w_1 = w_2 = 0,\\
		\frac{(a+2)w_2-w_1}{a^2+3a+3},&\text{if }w_2 \geq 1,\\
		1-\frac{w_1}{a^2+3a+3},&\text{if }w_1 \geq 1, w_2 = 0.
	\end{cases}
\]
\end{lemma}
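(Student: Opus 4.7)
The plan is to follow the same template as the proof of \Cref{lem:W1}: start from the decomposition $w = (a^2+3a+3)w_0 + (a+1)w_1 + w_2$ coming from $\phi_a$, multiply by $\tfrac{a+2}{a^2+3a+3}$, isolate the integer part, and then perform a case analysis on the sign and size of the remaining fractional piece.

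Concretely, I would first rewrite
\[
\frac{a+2}{a^2+3a+3}\,w = (a+2)w_0 + \frac{(a+1)(a+2)}{a^2+3a+3}\,w_1 + \frac{a+2}{a^2+3a+3}\,w_2,
\]
and then use the key identity $(a+1)(a+2) = a^2+3a+2 = (a^2+3a+3) - 1$ to simplify the middle coefficient as $1 - \tfrac{1}{a^2+3a+3}$. This yields the clean form
\[
\frac{a+2}{a^2+3a+3}\,w = (a+2)w_0 + w_1 + \frac{(a+2)w_2 - w_1}{a^2+3a+3},
\]
so all that remains is to determine when the trailing fraction lies in $[0,1)$ versus $(-1,0)$.

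Next I would split into the three cases listed in the statement. If $w_1 = w_2 = 0$, the fraction is $0$ and the claim is immediate. If $w_2 \geq 1$, then the constraint ``$w_2=0$ whenever $w_1=a+2$'' from $W_1(a)$ forces $w_1 \leq a+1$, so $(a+2)w_2 - w_1 \geq (a+2) - (a+1) = 1 > 0$; on the other hand, $(a+2)w_2 - w_1 \leq (a+2)a = a^2+2a < a^2+3a+3$, so the fraction lies in $(0,1)$ and contributes nothing to the floor. If instead $w_1 \geq 1$ and $w_2 = 0$, the fraction equals $-\tfrac{w_1}{a^2+3a+3}$, and the bounds $1 \leq w_1 \leq a+2$ give it in $(-1,0)$; therefore $w_1 + (\text{fraction})$ lies in $[w_1-1, w_1)$, forcing the floor to drop by $1$ and the fractional part of $\tfrac{a+2}{a^2+3a+3}w$ to equal $1 - \tfrac{w_1}{a^2+3a+3}$.

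I do not expect any real obstacle: the lemma is essentially a bookkeeping exercise. The only point requiring care is the boundary constraint in $W_1(a)$ that couples $w_1$ and $w_2$ (and the symmetric issue of ensuring the fractional part never equals $1$), which is why the case $w_2\geq 1$ needs the reduction $w_1\leq a+1$ and the case $w_1\geq 1,\,w_2=0$ needs the strict inequality $w_1\leq a+2 < a^2+3a+3$. Verifying these bounds is exactly analogous to the corresponding step in \Cref{lem:W1}, so the proof should mirror that one essentially line by line.
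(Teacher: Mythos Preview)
Your proposal is correct and follows essentially the same approach as the paper: both rewrite $\tfrac{a+2}{a^2+3a+3}w = (a+2)w_0 + w_1 + \tfrac{(a+2)w_2 - w_1}{a^2+3a+3}$ via the identity $(a+1)(a+2) = (a^2+3a+3)-1$ and then bound the trailing fraction case by case. The only cosmetic difference is that the paper merges the subcases $w_2=0$ into a single analysis before splitting on $w_1=0$ versus $w_1\geq 1$, whereas you treat the three cases of the statement separately from the outset.
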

\begin{proof}
We have
\begin{align*}
	\frac{a+2}{a^2+3a+3}w& = (a+2)w_0+\frac{(a+2)(a+1)}{a^2+3a+3}w_1+\frac{a+2}{a^2+3a+3}w_2\\
	& = (a+2)w_0+w_1+\left(\frac{a+2}{a^2+3a+3}w_2-\frac{1}{a^2+3a+3}w_1\right).
\end{align*}
If $ w_2 = 0 $, then the expression in the bracket satisfies
\[
	-1 < -\frac{a+2}{a^2+3a+3} \leq -\frac{w_1}{a^2+3a+3} \leq 0,
\]
hence
\[
	\left\lfloor\frac{a+2}{a^2+3a+3}w\right\rfloor = (a+2)w_0+\begin{cases}
	0,&\text{if }w_1 = 0,\\
	w_1-1,&\text{if }w_1 \geq 1
	\end{cases}
\]
and
\[
	\frac{a+2}{a^2+3a+3}w-\left\lfloor\frac{a+2}{a^2+3a+3}w\right\rfloor = \begin{cases}
		0,&\text{if }w_1 = 0,\\
		1-\frac{w_1}{a^2+3a+3},&\text{if }w_1 \geq 1.
	\end{cases}
\]
If $ w_2 \geq 1 $, then
\[
	0 < \frac{a+2}{a^2+3a+3}-\frac{a+1}{a^2+3a+3} \leq \frac{a+2}{a^2+3a+3}w_2-\frac{1}{a^2+3a+3}w_1 \leq \frac{(a+2)a}{a^2+3a+3} < 1,
\]
where we used that $ w_2 = 0 $ if $ w_1 = a+2 $. Hence,
\[
	\left\lfloor\frac{a+2}{a^2+3a+3}w\right\rfloor = (a+2)w_0+w_1
\]
and
\[
	\frac{a+2}{a^2+3a+3}w-\left\lfloor\frac{a+2}{a^2+3a+3}w\right\rfloor = \frac{(a+2)w_2-w_1}{a^2+3a+3}.\qedhere
\]
\end{proof}

\begin{lemma}
\label{lem:W3}
Let $ a \in \Z_{\geq 1} $, $ w \in \Z_{\geq 1} $, and $ (w_0, w_1, w_2) = \psi_a(w) $. We have
\[
	\left\lfloor-\frac{a+1}{a^2+3a+3}w\right\rfloor = -(a+1)w_0-w_1+\begin{cases}
		0,&\text{if }w_2 = 0,\\
		-1,&\text{if }w_2 \geq 1
	\end{cases}
\]
and
\[
	-\frac{a+1}{a^2+3a+3}w-\left\lfloor-\frac{a+1}{a^2+3a+3}w\right\rfloor = \begin{cases}
		\frac{w_1}{a^2+3a+3},&w_2 = 0,\\
		1+\frac{w_1-(a+1)w_2}{a^2+3a+3},&w_2 \geq 1.
	\end{cases}
\]
\end{lemma}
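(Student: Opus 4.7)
The plan is to perform the analogous direct computation as in \Cref{lem:W1} and \Cref{lem:W2}, but now using the decomposition given by $\psi_a$ instead of $\phi_a$. The key algebraic observation is that $(a+1)(a+2) = a^2+3a+2 = (a^2+3a+3) - 1$, which means that the coefficient $\frac{(a+1)(a+2)}{a^2+3a+3}$ differs from $1$ by only $\frac{1}{a^2+3a+3}$.

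First I would substitute $w = (a^2+3a+3)w_0 + (a+2)w_1 + w_2$ into $-\frac{a+1}{a^2+3a+3}w$, which after using the above identity gives
\[
	-\frac{a+1}{a^2+3a+3}w = -(a+1)w_0 - w_1 + \frac{w_1 - (a+1)w_2}{a^2+3a+3}.
\]
Thus it remains to determine the floor of the single remainder term $\frac{w_1 - (a+1)w_2}{a^2+3a+3}$, which will split into two cases according to whether $w_2 = 0$ or $w_2 \geq 1$.

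In the first case $w_2 = 0$, the remainder is $\frac{w_1}{a^2+3a+3}$, and since $0 \leq w_1 \leq a+1$ we have $0 \leq \frac{w_1}{a^2+3a+3} < 1$, so the floor is $0$ and the fractional part is exactly $\frac{w_1}{a^2+3a+3}$, matching the first line of the stated formula. In the second case $w_2 \geq 1$, the constraint from $W_2(a)$ forces $w_1 \leq a$, so $w_1 - (a+1)w_2$ is at most $a - (a+1) = -1 < 0$ and at least $-(a+1)^2 = -(a^2+2a+1) > -(a^2+3a+3)$; therefore $\frac{w_1-(a+1)w_2}{a^2+3a+3} \in (-1, 0)$, so its floor is $-1$ and its fractional part is $1 + \frac{w_1 - (a+1)w_2}{a^2+3a+3}$, matching the second line.

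There is no real obstacle here; the main thing to watch is to correctly use the boundary condition built into $W_2(a)$ (namely $w_2 = 0$ whenever $w_1 = a+1$) when bounding $w_1 - (a+1)w_2$ from above in the case $w_2 \geq 1$. Without this, the upper bound would be $(a+1) - (a+1) = 0$ and one could not conclude strict negativity.
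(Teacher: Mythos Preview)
Your proof is correct and essentially identical to the paper's: both substitute the $\psi_a$-decomposition, use $(a+1)(a+2)=(a^2+3a+3)-1$ to isolate the remainder $\frac{w_1-(a+1)w_2}{a^2+3a+3}$, and then bound it in the two cases $w_2=0$ and $w_2\geq 1$ using the constraint that $w_1\leq a$ when $w_2\geq 1$.
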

\begin{proof}
We have
\begin{align*}
	-\frac{a+1}{a^2+3a+3}w& = -(a+1)w_0-\frac{(a+1)(a+2)}{a^2+3a+3}w_1-\frac{a+1}{a^2+3a+3}w_2\\
	& = -(a+1)w_0-w_1+\left(\frac{w_1}{a^2+3a+3}-\frac{a+1}{a^2+3a+3}w_2\right).
\end{align*}

If $ w_2 = 0 $, then the term in the bracket satisfies
\[
	0 \leq \frac{w_1}{a^2+3a+3} \leq \frac{a+1}{a^2+3a+3} < 1,
\]
hence
\[
	\left\lfloor-\frac{a+1}{a^2+3a+3}w\right\rfloor = -(a+1)w_0-w_1
\]
and
\[
	-\frac{a+1}{a^2+3a+3}w-\left\lfloor-\frac{a+1}{a^2+3a+3}w\right\rfloor = \frac{w_1}{a^2+3a+3}.
\]

If $ w_2 \geq 1 $, then
\[
	-1 < -\frac{(a+1)^2}{a^2+3a+3} \leq \frac{w_1}{a^2+3a+3}-\frac{a+1}{a^2+3a+3}w_2 \leq \frac{a}{a^2+3a+3}-\frac{a+1}{a^2+3a+3}<0,
\]
where we used that $ w_2 = 0 $ if $ w_1 = a+1 $. Thus,
\[
	\left\lfloor-\frac{a+1}{a^2+3a+3}w\right\rfloor = -(a+1)w_0-w_1-1
\]
and
\[
	-\frac{a+1}{a^2+3a+3}w-\left\lfloor-\frac{a+1}{a^2+3a+3}w\right\rfloor = 1+\frac{w_1-(a+1)w_2}{a^2+3a+3}.\qedhere
\]
\end{proof}

\begin{lemma}
\label{lem:t1t2}
Let $ a \in \Z_{\geq 1} $. Let $ w \in \Z_{\geq 1} $, $ m = m_0(a, w)+m_1 $, and $ o = o_0(a, w)+o_1 $. If $ \alpha = t_1+t_2\rho^2+t_3(\rho')^{-2} $ and $ \tau(\alpha) = (m, -w, o) $, then $ m_1-1 \leq t_1 \leq m_1 $ and $ o_1-1 \leq t_2 \leq o_1 $.
\end{lemma}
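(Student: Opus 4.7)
The plan is to read off $ t_1 $ and $ t_2 $ directly from the explicit formulas
\[
	t_1 = m+\frac{a+1}{a^2+3a+3}w,\qquad t_2 = o-\frac{a+2}{a^2+3a+3}w
\]
derived at the start of \Cref{sec:Count}, and then to match them against the definitions of $ m_0(a,w) $ and $ o_0(a,w) $.

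The key observation is that, by definition,
\[
	m_0(a,w) = \left\lfloor -\frac{a+1}{a^2+3a+3}w \right\rfloor,\qquad o_0(a,w) = \left\lfloor \frac{a+2}{a^2+3a+3}w \right\rfloor,
\]
so writing $ \beta := -\frac{a+1}{a^2+3a+3}w $ and $ \gamma := \frac{a+2}{a^2+3a+3}w $, the fractional parts $ \{\beta\} = \beta - m_0(a,w) $ and $ \{\gamma\} = \gamma - o_0(a,w) $ lie in $ [0,1) $.

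Substituting $ m = m_0(a,w)+m_1 $ into the expression for $ t_1 $ gives
\[
	t_1 = m_0(a,w)+m_1-\beta = m_1-\{\beta\},
\]
and substituting $ o = o_0(a,w)+o_1 $ into the expression for $ t_2 $ gives
\[
	t_2 = o_0(a,w)+o_1-\gamma = o_1-\{\gamma\}.
\]
Since $ \{\beta\},\{\gamma\} \in [0,1) $, both inequalities $ m_1-1 \leq t_1 \leq m_1 $ and $ o_1-1 \leq t_2 \leq o_1 $ follow at once. There is no serious obstacle here; the lemma is essentially bookkeeping, recording that the integer shifts $ m_0(a,w) $ and $ o_0(a,w) $ were chosen precisely so that the residual coordinates $ m_1,o_1 $ control $ t_1,t_2 $ up to an error less than one.
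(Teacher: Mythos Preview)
Your proof is correct and essentially identical to the paper's own argument: both substitute $m = m_0(a,w)+m_1$ and $o = o_0(a,w)+o_1$ into the explicit formulas for $t_1$ and $t_2$, then use that $m_0(a,w)$ and $o_0(a,w)$ are the floors of $-\frac{a+1}{a^2+3a+3}w$ and $\frac{a+2}{a^2+3a+3}w$ to bound the remaining fractional part in $[0,1)$. Your use of the fractional-part notation $\{\beta\},\{\gamma\}$ is a cosmetic repackaging of the same computation.
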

\begin{proof}
We have
\[
	t_1 = m+\frac{a+1}{a^2+3a+3}w = m_0(a, w)+m_1+\frac{a+1}{a^2+3a+3}w = \left\lfloor -\frac{a+1}{a^2+3a+3}w \right\rfloor+m_1+\frac{a+1}{a^2+3a+3}w,
\]
hence $ m_1-1 \leq t_1 \leq m_1 $, and
\[
	t_2 = o-\frac{a+2}{a^2+3a+3}w = o_0(a,w)+o_1-\frac{a+2}{a^2+3a+3}w = \left\lfloor \frac{a+2}{a^2+3a+3}w \right\rfloor+o_1-\frac{a+2}{a^2+3a+3}w,
\]
hence $ o_1-1 \leq t_2 \leq o_1 $.
\end{proof}

\section{Upper bound}
\label{sec:UB}

\subsection{Estimate for \texorpdfstring{$ Q(a, S_1(a)) $}{Q(a,S1(a))}}
\label{sec:S1}

We recall that
\[
	S_1(a) := \left\{(m_1, o_1, w_0, w_1, w_2) \in \Z^2 \times W_1(a) : m_1 \geq 2, o_1 \geq 2\right\}.
\]
We write $ S_1(a) = S_{1,1}(a)\sqcup S_{1,2}(a)\sqcup S_{1,3}(a) $, where
\begin{align*}
	S_{1,1}(a)& := \left\{(m_1, o_1, w_0, w_1, w_2) \in S_1(a) : w_0 \geq 1\right\},\\
	S_{1,2}(a)& :=  \left\{(m_1, o_1, w_0, w_1, w_2) \in S_1(a) : w_0 = 0, w_1 \geq 1\right\},\\
	 S_{1,3}(a)& := \left\{(m_1, o_1, w_0, w_1, w_2) \in S_1(a) : w_0 = 0, w_1 = 0, w_2 \geq 1\right\}.
\end{align*}

\begin{lemma}
\label{lem:S11}
Let $ a \in \Z $, $ a \geq 8 $ and $ r > 0 $. If $ 1 \leq x \leq a^r $, then
\[
	\#\left\{\alpha \in Q(a, S_{1,1}(a)) : \Nm(\alpha) \leq x\right\} \ll_r \frac{x}{a^2}(\log a)^2.
\]
The constant implied in the estimate depends only on $ r $.
\end{lemma}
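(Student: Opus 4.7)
The plan is to exploit the bijective correspondence between elements $ \alpha \in Q(a, S_{1,1}(a)) $ and tuples $ (m_1, o_1, w_0, w_1, w_2) \in S_{1,1}(a) $ coming from the definitions of $ Q(a, \cdot) $, $ \Phi(a, \cdot) $, and the map $ \phi_a $. Under this correspondence one has $ \tau(\alpha) = (m_0(a,w) + m_1,\, -w,\, o_0(a,w) + o_1) $ where $ w = (a^2+3a+3)w_0 + (a+1)w_1 + w_2 $, and the defining conditions of $ S_{1,1}(a) $ read $ m_1, o_1 \geq 2 $ and $ w_0 \geq 1 $.

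The key step is to convert the norm bound $ \Nm(\alpha) \leq x $ into a multiplicative constraint on the triple $ (m_1, o_1, w_0) $. Writing $ \alpha = t_1 + t_2 \rho^2 + t_3 (\rho')^{-2} $, \Cref{lem:t1t2} yields $ t_1 \geq m_1 - 1 \geq m_1/2 $ and $ t_2 \geq o_1 - 1 \geq o_1/2 $ (using $ m_1, o_1 \geq 2 $), while the identity $ t_3 = w/(a^2+3a+3) $ together with $ w \geq (a^2+3a+3) w_0 $ gives $ t_3 \geq w_0 \geq 1 $. Plugging these lower bounds into the inequality $ a^4 t_1 t_2 t_3 < x $ from \Cref{lem:Estimates} produces
\[
	m_1 o_1 w_0 \leq \frac{4x}{a^4}.
\]

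The count is then routine. For each fixed $ w_0 $, the number of compatible $ (w_1, w_2) \in W_1(a) $ is at most $ (a+2)(a+1) + 1 \ll a^2 $, independently of $ w_0 $. Hence the number of tuples in $ S_{1,1}(a) $ satisfying the norm bound is at most
\[
	a^2 \cdot \#\left\{(m_1, o_1, w_0) \in \Z_{\geq 1}^3 : m_1 o_1 w_0 \leq \tfrac{4x}{a^4}\right\} \ll a^2 \cdot \frac{x}{a^4} \left(\log\frac{x}{a^4}\right)^2
\]
by the standard three-dimensional divisor sum $ \sum_{n \leq N} d_3(n) \ll N (\log N)^2 $. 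The assumption $ x \leq a^r $ gives $ \log(x/a^4) \leq r \log a $, so the final estimate is $ \ll_r (x/a^2) (\log a)^2 $. If $ x < a^4 $, the constraint $ m_1 o_1 w_0 \leq 4x/a^4 $ together with $ m_1 o_1 w_0 \geq 4 $ forces the count to be zero, so the conclusion holds vacuously.

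There is no real obstacle here: all the structural work has been done in \Cref{sec:Count}. The only thing to be careful about is choosing the right inequality from \Cref{lem:Estimates}, namely $ a^4 t_1 t_2 t_3 < x $, since this is the one that isolates the triple $ (m_1, o_1, w_0) $ cleanly; the other inequalities would miss the saving needed to reach the target $ (\log a)^2 $ rather than $ (\log a)^3 $.
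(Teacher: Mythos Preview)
Your proof is correct and follows essentially the same approach as the paper: both reduce to the constraint $a^4 t_1 t_2 t_3 < x$ from \Cref{lem:Estimates}, bound $t_1,t_2,t_3$ below via \Cref{lem:t1t2} and $t_3\geq w_0$, and then sum over the $\ll a^2$ free choices of $(w_1,w_2)$. The only cosmetic difference is that the paper sets $m_2=m_1-1$, $o_2=o_1-1$ and also invokes $t_1^3<x$, $t_2^3<x$ to bound the summation ranges $m_2,o_2\leq x^{1/3}$ before summing $\sum_{m_2}\sum_{o_2}\frac{1}{m_2 o_2}\ll(\log x)^2$, whereas you package the same count as a three-dimensional divisor sum; the resulting bounds coincide. (One tiny point: your divisor bound should be written with $(1+\log N)^2$ to cover $N$ close to $1$, but since you separately dispose of $x<a^4$ this causes no trouble.)
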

\begin{proof}
Let $ \alpha \in Q(a, S_{1,1}(a)) $, let $ t_1, t_2, t_3 \in \R_{>0} $ be such that $ \alpha = t_1+t_2\rho^2+t_3(\rho')^{-2} $, and let $ \tau(\alpha) = (m, -w, o) $, where $ (m, w, o) \in \Phi(a, S_{1,1}(a)) $. Let $ m = m_0(a, w)+m_1 $, $ o = o_0(a, w)+o_1 $ and $ m_2 = m_1-1 $, $ o_2 = o_1-1 $. Since $ m_1 \geq 2 $ and $ o_1 \geq 2 $, we have $ m_2 \geq 1 $ and $ o_2 \geq 1 $. By \Cref{lem:t1t2}, $ t_1 \geq m_1-1 = m_2 $ and $ t_2 \geq o_1-1 = o_2 $. We also have
\[
	t_3 = \frac{w}{a^2+3a+3} = w_0+\frac{(a+1)w_1}{a^2+3a+3}+\frac{w_2}{a^2+3a+3} \geq w_0.
\]

If $ \Nm(\alpha) \leq x $, then by \Cref{lem:Estimates}, $ t_1^3 < x $, $ t_2^3 < x $, and $ a^4t_1t_2t_3 < x $, hence $ m_2^3 < x $, $ o_2^3 < x $, and $ a^4m_2o_2w_0 < x $. Let
\[
	M(a, x) := \left\{(m_2, o_2, w) \in \Z_{\geq 1}^3 : m_2^3 < x, o_2^3 < x, a^4m_2o_2w_0 < x\right\}.
\]
We have
\[
	\#\left\{\alpha \in Q(a, S_{1,1}(a)) : \Nm(\alpha) \leq x\right\} \leq \#M(a, x).
\]

The number of triples in $ M(a, x) $ can be estimated as
\begin{align*}
	\#M(a, x)& \leq \sum_{1 \leq m_2 \leq x^{1/3}}\sum_{1 \leq o_2 \leq x^{1/3}}\sum_{0 \leq w_1 \leq a+2}\sum_{0 \leq w_2 \leq a}\frac{x}{a^4m_2o_2} \ll \frac{x}{a^2}\sum_{1 \leq m_2 \leq x^{1/3}}\sum_{1 \leq o_2 \leq x^{1/3}} \frac{1}{m_2o_2}\\
	& \ll \frac{x}{a^2}(\log 2x)^2 \ll_r \frac{x}{a^2}(\log a)^2.\qedhere
\end{align*}
\end{proof}

\begin{lemma}
\label{lem:S12}
Let $ a \in \Z $, $ a \geq 8 $ and $ r > 0 $. If $ 1 \leq x \leq a^r $, then
\[
	\#\left\{\alpha \in Q(a, S_{1,2}(a)) : \Nm(\alpha) \leq x\right\} \ll_r \frac{x}{a^2}(\log a)^2.
\]
The constant implied in the estimate depends only on $ r $.
\end{lemma}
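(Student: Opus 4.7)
The plan is to mirror the proof of \Cref{lem:S11}, with one essential modification. Tuples in $ S_{1,2}(a) $ are distinguished from those in $ S_{1,1}(a) $ by the conditions $ w_0 = 0 $ and $ w_1 \geq 1 $, so the lower bound $ t_3 \geq w_0 \geq 1 $ used there is no longer available; it is replaced by the weaker bound $ t_3 \geq \frac{(a+1)w_1}{a^2+3a+3} $, which scales like $ w_1/a $ rather than $ 1 $. I expect that this weakening will be compensated by the freedom in summing over $ w_1 $ (each unit of $ w_1 $ costs an extra factor of $ a $), producing the same final order of magnitude $ (\log a)^2 x/a^2 $.

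Concretely, I would take an arbitrary $ \alpha = t_1+t_2\rho^2+t_3(\rho')^{-2} \in Q(a, S_{1,2}(a)) $ with $ \Nm(\alpha) \leq x $, corresponding to a tuple $ (m_1, o_1, 0, w_1, w_2) \in S_{1,2}(a) $. Setting $ m_2 := m_1-1 $ and $ o_2 := o_1-1 $, both $ \geq 1 $, \Cref{lem:t1t2} gives $ t_1 \geq m_2 $ and $ t_2 \geq o_2 $, while the explicit formula for $ t_3 $ yields the bound above. Applying \Cref{lem:Estimates} then produces $ m_2^3 < x $, $ o_2^3 < x $, and the key inequality
\[
    a^4 m_2 o_2 \cdot \frac{(a+1)w_1}{a^2+3a+3} < x,
\]
which (for $ a \geq 8 $) simplifies to $ w_1 \leq \frac{2x}{a^3 m_2 o_2} $.

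Finally, I would sum the count over admissible five-tuples. For each $ (m_2, o_2) $ with $ m_2, o_2 \leq x^{1/3} $, the number of valid $ w_1 \geq 1 $ is at most $ \min\bigl(a+2, O(x/(a^3 m_2 o_2))\bigr) \leq O(x/(a^3 m_2 o_2)) $, and for each such $ w_1 $ there are at most $ a+1 $ choices of $ w_2 $. Putting this together,
\[
    \#\{\alpha \in Q(a, S_{1,2}(a)) : \Nm(\alpha) \leq x\} \ll \sum_{m_2 \leq x^{1/3}} \sum_{o_2 \leq x^{1/3}} (a+1) \cdot \frac{x}{a^3 m_2 o_2} \ll \frac{x}{a^2}(\log x)^2 \ll_r \frac{x}{a^2}(\log a)^2,
\]
where the last step uses $ x \leq a^r $. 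I do not anticipate any substantive obstacle: the argument is a direct variant of the one for $ S_{1,1}(a) $, and the factor $ a+1 $ saved from the $ w_2 $-sum precisely cancels the factor $ \frac{1}{a+1} $ absorbed into the $ w_1 $-bound, which is why the same final estimate emerges.
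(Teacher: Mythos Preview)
Your proposal is correct and follows essentially the same approach as the paper: you use the same lower bounds $t_1 \geq m_2$, $t_2 \geq o_2$, and $t_3 \geq \frac{(a+1)w_1}{a^2+3a+3} \geq \frac{w_1}{2a}$, feed them into \Cref{lem:Estimates} to obtain $a^3 m_2 o_2 w_1 < 2x$, and then sum over $m_2, o_2 \leq x^{1/3}$ with the free $w_2$-sum contributing a factor $\asymp a$. The paper's write-up is virtually identical.
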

\begin{proof}
Let $ \alpha \in Q(a, S_{1,2}(a)) $, let $ t_1, t_2, t_3 \in \R_{>0} $ be such that $ \alpha = t_1+t_2\rho^2+t_3(\rho')^{-2} $, and let $ \tau(\alpha) = (m, -w, o) $, where $ (m, w, o) \in \Phi(a, S_{1,2}(a)) $. Let $ m = m_0(a, w)+m_1 $, $ o = o_0(a, w)+o_1 $ and $ m_2 = m_1-1 $, $ o_2 = o_1-1 $. Since $ m_1 \geq 2 $ and $ o_1 \geq 2 $, we have $ m_2 \geq 1 $ and $ o_2 \geq 1 $. By \Cref{lem:t1t2}, $ t_1 \geq m_1-1 = m_2 $ and $ t_2 \geq o_1-1 = o_2 $. We also have
\[
	t_3 = \frac{w}{a^2+3a+3} = w_0+\frac{(a+1)w_1}{a^2+3a+3}+\frac{w_2}{a^2+3a+3} \geq \frac{(a+1)w_1}{a^2+3a+3} \geq \frac{w_1}{2a}.
\]

If $ \Nm(\alpha) \leq x $, then by \Cref{lem:Estimates}, $ t_1^3 < x $, $ t_2^3 < x $, and $ a^4t_1t_2t_3 < x $, hence $ m_2^3 < x $, $ o_2^3 < x $, and $ a^3m_2o_2w_1 < 2x $. Let
\[
	M(a, x) := \left\{(m_2, o_2, w) \in \Z_{\geq 1}^3 : m_2^3 < x, o_2^3 < x, w_0 = 0, a^3m_2o_2w_1 < 2x\right\}.
\]
We have
\[
	\#\left\{\alpha \in Q(a, S_{1,2}(a)) : \Nm(\alpha) \leq x\right\} \leq \#M(a, x).
\]

The number of triples in $ M(a, x) $ can be estimated as
\begin{align*}
	\#M(a, x)& \leq \sum_{1 \leq m_2 \leq x^{1/3}}\sum_{1 \leq o_2 \leq x^{1/3}}\sum_{0 \leq w_2 \leq a}\frac{2x}{a^3m_2o_2} \ll \frac{x}{a^2}\sum_{1 \leq m_2 \leq x^{1/3}}\sum_{1 \leq o_2 \leq x^{1/3}}\frac{1}{m_2o_2}\\
	& \ll \frac{x}{a^2}(\log 2x)^2 \ll_r \frac{x}{a^2}(\log a)^2.\qedhere
\end{align*}
\end{proof}

\begin{lemma}
\label{lem:S13}
Let $ a \in \Z $, $ a \geq 8 $ and $ r > 0 $. If $ 1 \leq x \leq a^r $, then
\[
	\#\left\{\alpha \in Q(a, S_{1,3}(a)) : \Nm(\alpha) \leq x\right\} \ll_r \frac{x}{a^2}(\log a)^2.
\]
The constant implied in the estimate depends only on $ r $.
\end{lemma}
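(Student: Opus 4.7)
The plan is to follow the template of \Cref{lem:S11} and \Cref{lem:S12}, adjusting only the combination of constraints from \Cref{lem:Estimates} that produces the controlling inequality. The key structural observation is that fixing $w_0 = 0$ and $w_1 = 0$ eliminates two would-be summation ranges (and with them the factors of $a$ those summations would contribute), but simultaneously forces $t_3$ to be as small as possible, namely $t_3 = w_2/(a^2+3a+3)$. The inequality $a^4 t_1 t_2 t_3 < x$ then becomes, up to an absolute constant, $a^2 m_2 o_2 w_2 \ll x$, and this extra factor of $a^2$ in the denominator precisely compensates for the missing $w_0, w_1$ summations.

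Concretely, I take $\alpha \in Q(a, S_{1,3}(a))$, write $\alpha = t_1 + t_2\rho^2 + t_3(\rho')^{-2}$ with $t_1, t_2, t_3 > 0$ and $\tau(\alpha) = (m,-w,o)$, and decompose $m = m_0(a,w)+m_1$, $o = o_0(a,w)+o_1$ as before. Setting $m_2 := m_1 - 1$ and $o_2 := o_1 - 1$, both are $\geq 1$ by the defining conditions of $S_{1,3}(a)$, and \Cref{lem:t1t2} gives $t_1 \geq m_2$, $t_2 \geq o_2$. Since $w_0 = w_1 = 0$ and $w_2 \geq 1$, we have $w = w_2$ with $1 \leq w_2 \leq a$, and hence $t_3 = w_2/(a^2+3a+3) \geq w_2/(2a^2)$ for $a \geq 8$. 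Feeding these lower bounds into the inequalities $t_1^3 < x$, $t_2^3 < x$, and $a^4 t_1 t_2 t_3 < x$ from \Cref{lem:Estimates}, I obtain $m_2^3 < x$, $o_2^3 < x$, and $a^2 m_2 o_2 w_2 \ll x$.

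The counting step is then a one-liner, since neither $w_0$ nor $w_1$ needs to be summed over:
\[
\#\{\alpha \in Q(a, S_{1,3}(a)) : \Nm(\alpha) \leq x\} \ll \sum_{1 \leq m_2 \leq x^{1/3}} \sum_{1 \leq o_2 \leq x^{1/3}} \frac{x}{a^2 m_2 o_2} \ll \frac{x}{a^2}(\log x)^2,
\]
which is $\ll_r (x/a^2)(\log a)^2$ in the range $x \leq a^r$. I do not expect any genuine obstacle: the argument is a routine variation of the previous two cases. The only item to verify carefully is that the binding constraint from \Cref{lem:Estimates} really is $a^4 t_1 t_2 t_3 < x$ (rather than, say, $a^2 t_1 t_2^2 < x$, which would only supply an inequality independent of $w_2$); this is exactly why the restricted regime $w_0 = w_1 = 0$ still yields the same final bound $(x/a^2)(\log a)^2$ as the other two subcases.
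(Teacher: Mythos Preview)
Your proof is correct and follows essentially the same approach as the paper: both use \Cref{lem:t1t2} to get $t_1\geq m_2$, $t_2\geq o_2$, observe that $t_3=w_2/(a^2+3a+3)\geq w_2/(2a^2)$ since $w_0=w_1=0$, apply the inequality $a^4t_1t_2t_3<x$ from \Cref{lem:Estimates} to obtain $a^2m_2o_2w_2\ll x$, and then sum over $m_2,o_2\leq x^{1/3}$. Your additional commentary on why $a^4t_1t_2t_3<x$ is the binding constraint is a nice bit of motivation not present in the paper's terse write-up.
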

\begin{proof}
Let $ \alpha \in Q(a, S_{1,3}(a)) $, let $ t_1, t_2, t_3 \in \R_{>0} $ be such that $ \alpha = t_1+t_2\rho^2+t_3(\rho')^{-2} $, and let $ \tau(\alpha) = (m, -w, o) $ where $ (m, w, o) \in \Phi(a, S_{1,3}(a)) $. Let $ m = m_0(a, w)+m_1 $, $ o = o_0(a, w)+o_1 $ and $ m_2 = m_1-1 $, $ o_2 = o_1-1 $. Since $ m_1 \geq 2 $ and $ o_1 \geq 2 $, we have $ m_2 \geq 1 $ and $ o_2 \geq 1 $. By \Cref{lem:t1t2}, $ t_1 \geq m_1-1 = m_2 $ and $ t_2 \geq o_1-1 = o_2 $. We also have
\[
	t_3 = \frac{w}{a^2+3a+3} = w_0+\frac{(a+1)w_1}{a^2+3a+3}+\frac{w_2}{a^2+3a+3} \geq \frac{w_2}{2a^2}.
\]

If $ \Nm(\alpha) \leq x $, then by \Cref{lem:Estimates}, $ t_1^3 < x $, $ t_2^3 < x $, and $ a^4t_1t_2t_3 < x $, hence $ m_2^3 < x $, $ o_2^3 < x $, and $ a^2m_2o_2w_2 < 2x $. Let
\[
	M(a, x) := \left\{(m_2, o_2, w) \in \Z_{\geq 1}^3 : m_2^3 < x, o_2^3 < x, w_0 = 0, w_1 = 0, a^2m_2o_2w_2 < 2x\right\}.
\]
We have
\[
	\#\left\{\alpha \in Q(a, S_{1,3}(a)) : \Nm(\alpha) \leq x\right\} \leq \#M(a, x).
\]

The number of triples in $ M(a, x) $ can be estimated as
\begin{align*}
	\#M(a, x)& \leq \sum_{1 \leq m_2 \leq x^{1/3}}\sum_{1 \leq o_2 \leq x^{1/3}}\frac{2x}{a^2m_2o_2} \ll \frac{x}{a^2}\sum_{1 \leq m_2 \leq x^{1/3}}\sum_{1 \leq o_2 \leq x^{1/3}}\frac{1}{m_2o_2}\\
	& \ll \frac{x}{a^2}(\log 2x)^2 \ll_r \frac{x}{a^2}(\log a)^2.\qedhere
\end{align*}
\end{proof}

\begin{prop}
\label{prop:S1}
Let $ a \in \Z $, $ a \geq 8 $ and $ r > 0 $. If $ 1 \leq x \leq a^r $, then
\[
	\#\left\{\alpha \in Q(a, S_1(a)) : \Nm(\alpha) \leq x\right\} \ll_r \frac{x}{a^2}(\log a)^2.
\]
The constant implied in the estimate depends only on $ r $.
\end{prop}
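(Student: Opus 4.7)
The proposition follows almost immediately from the three preceding lemmas by a trivial decomposition. The plan is as follows.

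First, I would observe that the partition $ S_1(a) = S_{1,1}(a) \sqcup S_{1,2}(a) \sqcup S_{1,3}(a) $ is indeed a disjoint union whose three parts exhaust $ S_1(a) $: every tuple in $ S_1(a) $ has $ (w_0,w_1,w_2) \in W_1(a) $, and we classify it by whether $ w_0 \geq 1 $, or $ w_0 = 0 $ with $ w_1 \geq 1 $, or $ w_0 = w_1 = 0 $ with $ w_2 \geq 1 $ (the case $ w_0 = w_1 = w_2 = 0 $ is excluded since this would correspond to $ w = 0 $, but $ w \in \Z_{\geq 1} $).

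Second, since the map $ \alpha \mapsto (m, -w, o, m_1, o_1, w_0, w_1, w_2) $ attached via $ \tau(\alpha) = (m,-w,o) $, $ m = m_0(a,w)+m_1 $, $ o = o_0(a,w)+o_1 $, $ \phi_a(w) = (w_0,w_1,w_2) $, is well-defined and the set $ Q(a, S) $ depends only on $ S $, the disjoint decomposition of $ S_1(a) $ induces a disjoint decomposition
\[
	Q(a, S_1(a)) = Q(a, S_{1,1}(a)) \sqcup Q(a, S_{1,2}(a)) \sqcup Q(a, S_{1,3}(a)).
\]

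Third, I would simply add the three upper bounds from \Cref{lem:S11}, \Cref{lem:S12}, and \Cref{lem:S13}, each of which contributes $ \ll_r (\log a)^2 x / a^2 $ uniformly for $ 1 \leq x \leq a^r $, to conclude
\[
	\#\{\alpha \in Q(a, S_1(a)) : \Nm(\alpha) \leq x\} \ll_r \frac{x}{a^2}(\log a)^2,
\]
with the implied constant depending only on $ r $. There is no genuine obstacle here: all the work has been done in establishing the three lemmas, where the cone-specific lower bounds on $ t_3 $ (namely $ t_3 \geq w_0 $, $ t_3 \geq w_1/(2a) $, and $ t_3 \geq w_2/(2a^2) $ respectively) were used to exploit \Cref{lem:Estimates}. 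Thus this proposition is merely the bookkeeping step that collates them.
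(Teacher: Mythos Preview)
Your proposal is correct and matches the paper's own proof essentially verbatim: the paper also notes $S_1(a) = S_{1,1}(a)\sqcup S_{1,2}(a)\sqcup S_{1,3}(a)$, deduces the corresponding disjoint decomposition of $Q(a,S_1(a))$, and then invokes \Cref{lem:S11,lem:S12,lem:S13}. Your only addition is the explicit remark that $w_0=w_1=w_2=0$ is excluded because $w\in\Z_{\geq 1}$, which is a fair clarification.
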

\begin{proof}
We have $ S_1(a) = S_{1,1}(a)\sqcup S_{1,2}(a)\sqcup S_{1,3}(a) $. From the definition of $ Q(a, S_1(a)) $, we get $ Q(a, S_1(a)) = Q(a, S_{1,1}(a))\sqcup Q(a, S_{1,2}(a))\sqcup Q(a, S_{1,3}(a)) $. The proposition now follows from \Cref{lem:S11,lem:S12,lem:S13}.
\end{proof}

\subsection{Estimate for \texorpdfstring{$ Q(a, S_2(a)) $}{Q(a,S2(a))}}
\label{sec:S2}

We recall that
\[
	S_2(a) = \left\{(m_1, o_1, w_0, w_1, w_2) \in \Z^2\times W_1(a) : m_1\geq 2, o_1 = 1\right\}.
\]
We write $ S_2(a) = S_{2,1}(a)\sqcup S_{2,2}(a)\sqcup \dots \sqcup S_{2,6}(a) $, where
\begin{align*}
	S_{2,1}(a)& := \left\{(m_1, o_1, w_0, w_1, w_2) \in S_2(a) : w_0 \geq 1, w_1 \geq 0, w_2 \geq 1\right\},\\
	S_{2,2}(a)& := \left\{(m_1, o_1, w_0, w_1, w_2) \in S_2(a) : w_0 = 0, w_1 \geq 1, w_2 \geq 1\right\},\\
	S_{2,3}(a)& := \left\{(m_1, o_1, w_0, w_1, w_2) \in S_2(a) : w_0 = 0, w_1 = 0, w_2 \geq 1\right\},\\
	S_{2,4}(a)& := \left\{(m_1, o_1, w_0, w_1, w_2) \in S_2(a) : w_0 \geq 1, w_1 \geq 1, w_2 = 0\right\},\\
	S_{2,5}(a)& := \left\{(m_1, o_1, w_0, w_1, w_2) \in S_2(a) : w_0 = 0, w_1 \geq 1, w_2 = 0\right\},\\
	S_{2,6}(a)& := \left\{(m_1, o_1, w_0, w_1, w_2) \in S_2(a) : w_0 \geq 1, w_1 = 0, w_2 = 0\right\}.
\end{align*}

\begin{lemma}
\label{lem:Estimatet2}
Let $ a \in \Z_{\geq 1} $. Let $ w \in \Z_{\geq 1} $, $ m \in \Z $, and $ o = o_0(a, w)+1 $. If $ \alpha = t_1+t_2\rho^2+t_3(\rho')^{-2} $ and $ \tau(\alpha) = (m, -w, o) $, then
\[
	t_2 = \begin{cases}
		1,&w_1 = w_2 = 0,\\
		1-\frac{(a+2)w_2-w_1}{a^2+3a+3},&w_2 \geq 1,\\
		\frac{w_1}{a^2+3a+3},&w_1 \geq 1, w_2 = 0.
	\end{cases}
\]
\end{lemma}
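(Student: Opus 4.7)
The plan is to substitute $o = o_0(a,w)+1$ into the formula $t_2 = o - \frac{a+2}{a^2+3a+3}w$ derived in \Cref{sec:Count} and then invoke \Cref{lem:W2} to handle the three cases.

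First, I would recall from the opening of \Cref{sec:Count} that $t_2 = o - \frac{a+2}{a^2+3a+3}w$. Substituting $o = o_0(a,w)+1 = \left\lfloor \frac{a+2}{a^2+3a+3}w \right\rfloor + 1$ gives
\[
	t_2 = 1 - \left(\frac{a+2}{a^2+3a+3}w - \left\lfloor \frac{a+2}{a^2+3a+3}w \right\rfloor\right),
\]
so the task reduces to computing the fractional part of $\frac{a+2}{a^2+3a+3}w$.

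The fractional part is exactly what \Cref{lem:W2} provides, split according to the three cases on $(w_1, w_2) = \phi_a(w)$. In the case $w_1 = w_2 = 0$, the fractional part is $0$, yielding $t_2 = 1$. In the case $w_2 \geq 1$, the fractional part equals $\frac{(a+2)w_2 - w_1}{a^2+3a+3}$, giving the middle formula for $t_2$. In the case $w_1 \geq 1$, $w_2 = 0$, the fractional part equals $1 - \frac{w_1}{a^2+3a+3}$, and subtracting this from $1$ leaves $\frac{w_1}{a^2+3a+3}$, which matches the third case.

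There is no real obstacle here; the statement is a direct unpacking of \Cref{lem:W2} composed with the definition of $t_2$ in terms of $(m,-w,o)$. The only thing to double-check is that the three subcases of \Cref{lem:W2} exhaust $W_1(a)$ and correspond one-to-one with the three cases in the claim, which they do.
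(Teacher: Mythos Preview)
Your proof is correct and follows exactly the same approach as the paper: substitute $o = o_0(a,w)+1$ into the formula $t_2 = o - \frac{a+2}{a^2+3a+3}w$ to express $t_2$ as $1$ minus the fractional part of $\frac{a+2}{a^2+3a+3}w$, and then read off the three cases from \Cref{lem:W2}.
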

\begin{proof}
We have
\[
	t_2 = o-\frac{a+2}{a^2+3a+3}w = o_0(a,w)+1-\frac{a+2}{a^2+3a+3}w = 1-\left(\frac{a+2}{a^2+3a+3}w-\left\lfloor\frac{a+2}{a^2+3a+3}w\right\rfloor\right).
\]
By \Cref{lem:W2},
\[
	\frac{a+2}{a^2+3a+3}w-\left\lfloor\frac{a+2}{a^2+3a+3}w\right\rfloor = \begin{cases}
		0,&w_1 = w_2 = 0,\\
		\frac{(a+2)w_2-w_1}{a^2+3a+3},&w_2 \geq 1,\\
		1-\frac{w_1}{a^2+3a+3},&w_1 \geq 1, w_2 = 0.
	\end{cases}
\]
The lemma follows.
\end{proof}

\begin{lemma}
\label{lem:S21}
Let $ a \in  \Z $, $ a \geq 8 $, and $ r > 0 $. If $ 1 \leq x \leq a^r $, then
\[
	\#\left\{\alpha \in Q(a, S_{2,1}(a)) : \Nm(\alpha) \leq x\right\} \ll_r \frac{x}{a^2}(\log a)^2.
\]
The constant implied in the estimate depends only on $ r $.
\end{lemma}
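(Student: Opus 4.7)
I plan to follow the template of \Cref{lem:S11,lem:S12,lem:S13}, but the key difficulty is that $o_1 = 1$ forces $o_2 = 0$, so the $1/o_2$-saving that underlay the previous sums disappears. The remedy is to use \Cref{lem:Estimatet2} to replace the role of $o_2$ by an explicit integer parameter arising from the formula for $t_2$.

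Concretely, let $\alpha \in Q(a, S_{2,1}(a))$ with $\Nm(\alpha) \leq x$, set $m_2 := m_1 - 1 \geq 1$, and let $t_1, t_2, t_3 \in \R_{>0}$ satisfy $\alpha = t_1 + t_2 \rho^2 + t_3 (\rho')^{-2}$. Then $t_1 \geq m_2$ by \Cref{lem:t1t2}, and $t_3 \geq w_0 \geq 1$ follows from $t_3 = w/(a^2+3a+3)$. Since $w_2 \geq 1$, the middle case of \Cref{lem:Estimatet2} gives the exact identity
\[
	t_2 = \frac{j}{a^2+3a+3}, \qquad j := a^2+3a+3 + w_1 - (a+2) w_2,
\]
with $j \in \{a+3, \ldots, a^2+3a+2\}$. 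The crucial observation is that for each fixed $j$ there is at most one admissible pair $(w_1, w_2) \in \{0,\ldots,a+1\} \times \{1,\ldots,a\}$ producing it: the relation $w_1 = j - (a^2+3a+3) + (a+2)w_2$ together with $w_1 \in \{0,\ldots,a+1\}$ confines $w_2$ to an interval of length $(a+1)/(a+2) < 1$. This change of variables replaces the sum over $(w_1, w_2) \in W_1(a)$ (of size $\asymp a^2$, too large for the target) by a sum over a one-dimensional parameter.

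Substituting $t_1 \geq m_2$, $t_3 \geq w_0$, and $t_2 = j/(a^2+3a+3)$ into the bound $a^4 t_1 t_2 t_3 < x$ from \Cref{lem:Estimates}, and using $a^2+3a+3 \leq 2a^2$ for $a \geq 8$, one obtains $a^2 m_2 j w_0 \leq 2x$. Hence
\[
	\#\{\alpha \in Q(a, S_{2,1}(a)) : \Nm(\alpha) \leq x\} \leq \#M(a, x),
\]
where $M(a, x) := \{(m_2, w_0, j) \in \Z_{\geq 1}^2 \times \{a+3, \ldots, a^2+3a+2\} : a^2 m_2 j w_0 \leq 2x\}$. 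Fixing $j$ and applying the standard divisor estimate $\#\{(m_2, w_0) \in \Z_{\geq 1}^2 : m_2 w_0 \leq y\} \ll y \log(y+2)$, then summing over $j$ and using that $x \leq a^r$ implies both $\log(x/a^2) \ll_r \log a$ and $\sum_{j=a+3}^{a^2+3a+2} 1/j \ll \log a$, would yield $\#M(a, x) \ll_r (x/a^2)(\log a)^2$.

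The main obstacle is the ``at most one pair per $j$'' combinatorial observation: a naive count that sums over all $(w_1, w_2) \in W_1(a)$ without exploiting the structure of $t_2$ picks up a spurious factor of $\asymp a^2$ and gives a useless bound. The remaining parts $S_{2,2}, \ldots, S_{2,6}$ (and the analogous $S_{3,\bullet}$ pieces) should run along the same template, invoking the appropriate case of \Cref{lem:W2} or \Cref{lem:W3} to obtain an explicit expression for $t_2$ that plays the role of $j$ here.
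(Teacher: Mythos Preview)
Your argument is correct, and it takes a genuinely different route from the paper's proof. The paper does not introduce the single parameter $j$; instead it splits into two cases according to whether $w_2 \leq a/2$ or $w_2 > a/2$. In the first case one has $t_2 > 1/2$, so $a^4 t_1 t_2 t_3 < x$ yields $a^4 m_2 w_0 < 2x$, and the free sum over $w_1, w_2$ costs only a factor $\asymp a^2$. In the second case one writes $w_2 = (a+1) - z_2$ with $1 \leq z_2 < a/2 + 1$, bounds $t_2 > z_2/(2a)$, and gets $a^3 m_2 z_2 w_0 < 2x$; the free sum over $w_1$ costs a factor $\asymp a$ and the sum over $z_2$ gives the second $\log a$. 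Your injectivity observation for the map $(w_1,w_2) \mapsto j$ replaces this case split by a single sum over $j \in \{a+3,\ldots,a^2+3a+2\}$, which is cleaner and yields the same bound. A minor cosmetic point: the expression $\log(x/a^2)$ you write can be negative; what you actually need (and what your divisor estimate gives) is $\log(y+2)$ with $y = 2x/(a^2 j) \leq 2x$, which is indeed $\ll_r \log a$ for $1 \leq x \leq a^r$.
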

\begin{proof}
Let $ \alpha \in Q(a, S_{2,1}(a)) $, let $ t_1, t_2, t_3 \in \R_{>0} $ be such that $ \alpha = t_1+t_2\rho^2+t_3(\rho')^{-2} $, and let $ \tau(\alpha) = (m,-w,o) $, where $ (m, w, o) \in \Phi(a, S_{2,1}(a)) $. Let $ m = m_0(a, w)+m_1 $, $ o = o_0(a, w)+1 $, and $ m_2 = m_1-1 $. Since $ m_1 \geq 2 $, we have $ m_2 \geq 1 $.

By \Cref{lem:t1t2}, $ t_1 \geq m_1-1 = m_2 $.

We have $ w_2 \geq 1 $, hence by \Cref{lem:Estimatet2},
\[
	t_2 = 1-\frac{(a+2)w_2-w_1}{a^2+3a+3}.
\]

We also have
\[
	t_3 = \frac{w}{a^2+3a+3} = w_0+\frac{(a+1)w_1}{a^2+3a+3}+\frac{w_2}{a^2+3a+3} \geq w_0.
\]

First, assume $ w_2 \leq \frac{a}{2} $. Then
\[
	t_2 > 1-\frac{(a+2)w_2}{a^2+3a+3} \geq \frac{\frac{a^2}{2}+2a+3}{a^2+3a+3} > \frac{1}{2}.
\]

If $ \Nm(\alpha) \leq x $, then by \Cref{lem:Estimates}, $ t_1^3 < x $ and $ a^4t_1t_2t_3 < x $, hence $ m_2^3 < x $ and $ a^4m_2w_0 < 2x $. Let
\[
	M_1(a, x) := \left\{(m_2, w) \in \Z_{\geq 1}^2 : m_2^3 < x, w_0 \geq 1, a^4m_2w_0 < 2x, 1 \leq w_2 \leq \frac{a}{2}\right\}.
\]

The number of tuples in $ M_1(a, x) $ can be estimated as
\begin{align*}
	\#M_1(a, x)& \leq \sum_{1 \leq m_2 \leq x^{1/3}}\sum_{0 \leq w_1 \leq a+1}\sum_{1 \leq w_2 \leq a/2}\frac{2x}{a^4m_2} \ll \frac{x}{a^2}\sum_{1 \leq m_2 \leq x^{1/3}}\frac{1}{m_2}\\
	& \ll \frac{x}{a^2}(\log 2x) \ll_r \frac{x}{a^2}(\log a).
\end{align*}

Secondly, assume $ w_2 > \frac{a}{2} $. We write $ w_2 = (a+1)-z_2 $, where $ 1 \leq z_2 < \frac{a}{2}+1 $. Now
\[
	t_2 \geq 1-\frac{a+2}{a^2+3a+3}w_2 > \frac{(a+2)z_2}{a^2+3a+3} > \frac{z_2}{2a}.
\]

If $ \Nm(\alpha) \leq x $, then by \Cref{lem:Estimates}, $ t_1^3 < x $ and $ a^4t_1t_2t_3 < x $, hence $ m_2^3 < x $ and $ a^3m_2z_2w_0 < 2x $. Let
\[
	M_2(a, x) := \left\{(m_2, w) \in \Z_{\geq 1}^2 : m_2^3 < x, w_0 \geq 1, w_2 = (a+1)-z_2, 1 \leq z_2 < \frac{a}{2}+1, a^3m_2z_2w_0 < 2x\right\}.
\]

The number of tuples in $ M_2(a, x) $ can be estimated as
\begin{align*}
	\#M_2(a, x)& \leq \sum_{1 \leq m_2 \leq x^{1/3}}\sum_{0 \leq w_1 \leq a+1}\sum_{1 \leq z_2 < \frac{a}{2}+1}\frac{2x}{a^3m_2z_2} \ll \frac{x}{a^2}(\log a)\sum_{1 \leq m_2 \leq x^{1/3}}\frac{1}{m_2}\\
	& \ll \frac{x}{a^2}(\log a)(\log 2x) \ll_r \frac{x}{a^2}(\log a)^2.
\end{align*}
We have
\[
	\#\left\{\alpha \in Q(a, S_{2,1}(a)) : \Nm(\alpha)\leq x\right\} \leq \#M_1(a, x)+\#M_2(a, x) \ll_r \frac{x}{a^2}(\log a)^2.\qedhere
\]
\end{proof}

\begin{lemma}
\label{lem:S22}
Let $ a \in \Z $, $ a \geq 8 $ and $ r > 0 $. If $ 1 \leq x \leq a^r $, then
\[
	\#\left\{\alpha \in Q(a, S_{2,2}(a)) : \Nm(\alpha)\leq x\right\} \ll_r \frac{x}{a^2}(\log a)^2.
\]
The constant implied in the estimate depends only on $ r $.
\end{lemma}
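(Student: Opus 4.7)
The plan is to mimic the case analysis used in \Cref{lem:S21} for the set $ S_{2,2}(a) $, exploiting the fact that now $ w_0 = 0 $, $ w_1 \geq 1 $, and $ w_2 \geq 1 $. First I fix $ \alpha \in Q(a, S_{2,2}(a)) $, write $ \alpha = t_1+t_2\rho^2+t_3(\rho')^{-2} $ with $ \tau(\alpha) = (m,-w,o) $, set $ m = m_0(a,w)+m_1 $, $ o = o_0(a,w)+1 $, $ m_2 = m_1-1 \geq 1 $, so that \Cref{lem:t1t2} gives $ t_1 \geq m_2 $. Since $ w_2 \geq 1 $, \Cref{lem:Estimatet2} yields
\[
	t_2 = 1-\frac{(a+2)w_2-w_1}{a^2+3a+3},
\]
and from $ w_0 = 0 $ together with $ w_1 \geq 1 $ I obtain
\[
	t_3 = \frac{(a+1)w_1}{a^2+3a+3}+\frac{w_2}{a^2+3a+3} \geq \frac{w_1}{2a}.
\]

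The second step is to split on the size of $ w_2 $ exactly as in \Cref{lem:S21}. When $ w_2 \leq a/2 $, the same computation there shows $ t_2 > 1/2 $, so from \Cref{lem:Estimates} and $ \Nm(\alpha) \leq x $ I get $ m_2^3 < x $ and $ a^3m_2w_1 \ll x $. Counting ordered tuples then gives
\[
	\#M_1(a, x) \leq \sum_{1 \leq m_2 \leq x^{1/3}}\sum_{1 \leq w_2 \leq a/2}\frac{4x}{a^3m_2} \ll \frac{x}{a^2}\log(2x) \ll_r \frac{x}{a^2}\log a,
\]
where the $ w_1 $-sum was absorbed by the divisibility constraint $ w_1 \leq 4x/(a^3m_2) $ and the $ w_2 $-sum contributed a factor $ a $.

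For the complementary case $ w_2 > a/2 $, I write $ w_2 = (a+1)-z_2 $ with $ 1 \leq z_2 < a/2+1 $; the computation from \Cref{lem:S21} gives $ t_2 \geq z_2/(2a) $. Combining the three lower bounds on $ t_1, t_2, t_3 $ with $ a^4t_1t_2t_3 < x $ from \Cref{lem:Estimates} yields the key constraint $ a^2m_2z_2w_1 \ll x $, which together with $ m_2^3 < x $ produces
\[
	\#M_2(a, x) \leq \sum_{1 \leq m_2 \leq x^{1/3}}\sum_{1 \leq z_2 < a/2+1}\frac{4x}{a^2m_2z_2} \ll \frac{x}{a^2}(\log a)(\log 2x) \ll_r \frac{x}{a^2}(\log a)^2,
\]
and adding the two case bounds completes the estimate.

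The argument contains no genuine obstacle beyond keeping track of the correct case split on $ w_2 $: the presence of $ w_1 \geq 1 $ (instead of the $ w_0 \geq 1 $ of $ S_{2,1} $) shifts one factor of $ 1/a $ from $ t_3 $ into the sum on $ w_1 $, but since $ w_1 $ is only summed subject to the multiplicative constraint, the final bound is the same. The extra logarithmic factor in the second case comes, as before, from the harmonic sum over $ z_2 $, which is unavoidable and is what dictates the $ (\log a)^2 $ in the statement.
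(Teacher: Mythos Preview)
Your proof is correct and follows essentially the same approach as the paper: the same setup via \Cref{lem:t1t2} and \Cref{lem:Estimatet2}, the same lower bound $t_3 \geq w_1/(2a)$ from $w_0 = 0$, and the identical case split on $w_2 \leq a/2$ versus $w_2 > a/2$ leading to the constraints $a^3 m_2 w_1 \ll x$ and $a^2 m_2 w_1 z_2 \ll x$ respectively. The resulting sums and the final bounds match the paper's proof line by line.
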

\begin{proof}
Let $ \alpha \in Q(a, S_{2,2}(a)) $, let $ t_1, t_2, t_3 \in \R_{>0} $ be such that $ \alpha = t_1+t_2\rho^2+t_3(\rho')^{-2} $, and let $ \tau(\alpha) = (m, -w, o) $, where $ (m, w, o) \in \Phi(a, S_{2,2}(a)) $. Let $ m = m_0(a, w)+m_1 $, $ o = o_0(a, w)+1 $, and $ m_2 = m_1-1 $. Since $ m_1 \geq 2 $, we have $ m_2 \geq 1 $.

By \Cref{lem:t1t2}, $ t_1 \geq m_1-1 = m_2 $.

We have $ w_2 \geq 1 $, hence by \Cref{lem:Estimatet2},
\[
	t_2 = 1-\frac{(a+2)w_2-w_1}{a^2+3a+3}.
\]

We also have
\[
	t_3 = \frac{w}{a^2+3a+3} = w_0+\frac{(a+1)w_1}{a^2+3a+3}+\frac{w_2}{a^2+3a+3} \geq \frac{(a+1)w_1}{a^2+3a+3} \geq \frac{w_1}{2a}.
\]

First, assume $ w_2 \leq \frac{a}{2} $. Then
\[
	t_2 > 1-\frac{(a+2)w_2}{a^2+3a+3} \geq \frac{\frac{a^2}{2}+2a+3}{a^2+3a+3} > \frac{1}{2}.
\]
If $ \Nm(\alpha) \leq x $, then by \Cref{lem:Estimates}, $ t_1^3 < x $ and $ a^4t_1t_2t_3 < x $, hence $ m_2^3 < x $ and $ a^3m_2w_1 < 4x $. Let
\[
	M_1(a, x) := \left\{(m_2, w) \in \Z_{\geq 1}^2 : m_2^3 < x, w_0 = 0, w_1 \geq 1, a^3m_2w_1 < 4x, 1 \leq w_2 \leq \frac{a}{2}\right\}.
\]
The number of tuples in $ M_1(a, x) $ can be estimated as
\begin{align*}
	\#M_1(a, x)& \leq \sum_{1 \leq m_2 \leq x^{1/3}}\sum_{1 \leq w_2 \leq \frac{a}{2}}\frac{4x}{a^3m_2} \ll \frac{x}{a^2}\sum_{1 \leq m_2 \leq x^{1/3}}\frac{1}{m_2}\\
	& \ll \frac{x}{a^2}(\log 2x) \ll_r \frac{x}{a^2}(\log a).
\end{align*}

Secondly, assume $ w_2 > \frac{a}{2} $. We write $ w_2 = (a+1)-z_2 $, where $ 1 \leq z_2 < \frac{a}{2}+1 $. Now
\[
	t_2 \geq 1-\frac{a+2}{a^2+3a+3}w_2 > \frac{(a+2)z_2}{a^2+3a+3} > \frac{z_2}{2a}.
\]
If $ \Nm(\alpha) \leq x $, then by \Cref{lem:Estimates}, $ t_1^3 < x $ and $ a^4t_1t_2t_3 < x $, hence $ m_2^3 < x $ and $ a^2m_2w_1z_2 < 4x $. Let
\[
	M_2(a, x) := \left\{(m_2, w) \in \Z_{\geq 1}^2 : m_2^3 < x, w_0 = 0, w_1 \geq 1, w_2 = (a+1)-z_2, 1 \leq z_2 < \frac{a}{2}+1, a^2m_2w_1z_2 < 4x\right\}.
\]
The number of tuples in $ M_2(a, x) $ can be estimated as
\begin{align*}
	\#M_2(a, x)& \leq \sum_{1 \leq m_2 \leq x^{1/3}}\sum_{1 \leq z_2 < \frac{a}{2}+1}\frac{4x}{a^2m_2z_2} \ll \frac{x}{a^2}(\log a)\sum_{1 \leq m_2 \leq x^{1/3}}\frac{1}{m_2}\\ & \ll \frac{x}{a^2}(\log a)(\log 2x) \ll_r \frac{x}{a^2}(\log a)^2.
\end{align*}

We have
\[
	\#\left\{\alpha \in Q(a, S_{2,2}(a)) : \Nm(\alpha)\leq x\right\} \leq \#M_1(a, x)+\#M_2(a, x) \ll_r \frac{x}{a^2}(\log a)^2.\qedhere
\]
\end{proof}

\begin{lemma}
\label{lem:S23}
Let $ a \in \Z $, $ a \geq 8 $ and $ r > 0 $. If $ 1 \leq x \leq a^r $, then
\[
	\#\left\{\alpha \in Q(a, S_{2,3}(a)) : \Nm(\alpha) \leq x\right\} \ll_r \frac{x}{a^2}(\log a).
\]
The constant implied in the estimate depends only on $ r $.
\end{lemma}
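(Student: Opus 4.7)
The plan is to follow the template of \Cref{lem:S21,lem:S22}, adapting the estimates to the rigidity imposed by $w_0 = w_1 = 0$ and $w_2 \geq 1$. Let $\alpha \in Q(a, S_{2,3}(a))$, write $\alpha = t_1 + t_2\rho^2 + t_3(\rho')^{-2}$ with $\tau(\alpha) = (m, -w, o)$, $m = m_0(a,w)+m_1$, $o = o_0(a,w)+1$, and set $m_2 := m_1 - 1 \geq 1$. Then $t_1 \geq m_2$ by \Cref{lem:t1t2}. Since $w_0 = w_1 = 0$ forces $w = w_2$, one has $t_3 = w_2/(a^2+3a+3) \geq w_2/(2a^2)$ for $a \geq 8$, and \Cref{lem:Estimatet2} gives $t_2 = 1 - (a+2)w_2/(a^2+3a+3)$.

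As in \Cref{lem:S21,lem:S22}, I would split according to the size of $w_2$. If $w_2 \leq a/2$, a direct calculation shows $t_2 > 1/2$, so the inequality $a^4 t_1 t_2 t_3 < x$ from \Cref{lem:Estimates}, combined with the lower bound on $t_3$, yields $a^2 m_2 w_2 \ll x$. If $w_2 > a/2$, substitute $w_2 = (a+1) - z_2$ with $1 \leq z_2 < a/2 + 1$; the same algebra as in \Cref{lem:S22} gives $t_2 > z_2/(2a)$, and now $t_3 \gg 1/a$, so \Cref{lem:Estimates} produces $a^2 m_2 z_2 \ll x$. In both cases the universal bound $m_2^3 < x$ also holds (from $t_1^3 < x$).

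The counting is then routine. Each case reduces to summing over pairs $(m_2, \xi)$ with $\xi \in \{w_2, z_2\}$ satisfying $a^2 m_2 \xi \ll x$ and $m_2 \leq x^{1/3}$, which gives
\[
	\sum_{1 \leq m_2 \leq x^{1/3}} \frac{x}{a^2 m_2} \ll \frac{x}{a^2}\log x \ll_r \frac{x}{a^2}\log a,
\]
using the hypothesis $x \leq a^r$ in the last step. Note that only a single logarithmic factor appears, in contrast with the $(\log a)^2$ bounds of \Cref{lem:S21,lem:S22}; this saving is precisely due to $w_0$ and $w_1$ being forced to vanish, so the parameter summation that previously contributed an extra $\log a$ (or a cancelling factor of $a$ between a summation range and a denominator) is simply absent here. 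I do not expect any serious obstacle; the main thing to check is that the weaker lower bound $t_3 \gg w_2/a^2$ relative to the preceding lemmas is still strong enough when inserted into \Cref{lem:Estimates}, which indeed it is thanks to the reduced number of summation variables.
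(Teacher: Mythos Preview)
Your proposal is correct and matches the paper's proof essentially line for line: the same lower bounds $t_1 \geq m_2$, $t_3 \geq w_2/(2a^2)$, the same expression for $t_2$ from \Cref{lem:Estimatet2}, the same split at $w_2 \leq a/2$ versus $w_2 > a/2$ with the substitution $w_2 = (a+1)-z_2$, and the same application of $a^4 t_1 t_2 t_3 < x$ from \Cref{lem:Estimates} leading to a single harmonic sum over $m_2 \leq x^{1/3}$. Your observation about why only one logarithm appears is exactly the point.
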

\begin{proof}
Let $ \alpha \in Q(a, S_{2,3}(a)) $, let $ t_1, t_2, t_3 \in \R_{>0} $ be such that $ \alpha = t_1+t_2\rho^2+t_3(\rho')^{-2} $, and let $ \tau(\alpha) = (m, -w, o) $, where $ (m, w, o) \in \Phi(a, S_{2,3}(a)) $. Let $ m = m_0(a, w)+m_1 $, $ o = o_0(a, w)+1 $, and $ m_2 = m_1-1 $. Since $ m_1 \geq 2 $, we have $ m_2 \geq 1 $.

By \Cref{lem:t1t2}, $ t_1 \geq m_1-1 = m_2 $.

We have $ w_2 \geq 1 $, hence by \Cref{lem:Estimatet2},
\[
	t_2 = 1-\frac{(a+2)w_2-w_1}{a^2+3a+3} = 1-\frac{(a+2)w_2}{a^2+3a+3}.
\]

We also have
\[
	t_3 = \frac{w}{a^2+3a+3} = w_0+\frac{(a+1)w_1}{a^2+3a+3}+\frac{w_2}{a^2+3a+3} = \frac{w_2}{a^2+3a+3} \geq \frac{w_2}{2a^2}.
\]

First, assume $ w_2 \leq \frac{a}{2} $. Then
\[
	t_2 \geq 1-\frac{(a+2)w_2}{a^2+3a+3} \geq \frac{\frac{a^2}{2}+2a+3}{a^2+3a+3} > \frac{1}{2}.
\]

If $ \Nm(\alpha) \leq x $, then by \Cref{lem:Estimates}, $ t_1^3 < x $ and $ a^4t_1t_2t_3 < x $, hence $ m_2^3 < x $ and $ a^2m_2w_2 < 4x $. Let
\[
	M_1(a, x) := \left\{(m_2, w_2) \in \Z_{\geq 1}^2 : m_2^3 < x, a^2m_2w_2 < 4x\right\}.
\]
The number of tuples in $ M_1(a, x) $ can be estimated as
\[
	\#M_1(a, x) \leq \sum_{1 \leq m_2 \leq x^{1/3}}\frac{4x}{a^2m_2} \ll \frac{x}{a^2}(\log 2x) \ll_r \frac{x}{a^2}(\log a).
\]

Secondly, assume $ w_2 > \frac{a}{2} $. We write $ w_2 = (a+1)-z_2 $, where $ 1 \leq z_2 < \frac{a}{2}+1 $. Now
\[
	t_2 > 1-\frac{a+2}{a^2+3a+3}w_2 > \frac{(a+2)z_2}{a^2+3a+3} > \frac{z_2}{2a}.
\]
If $ \Nm(\alpha) \leq x $, then by \Cref{lem:Estimates}, $ t_1^3 < x $ and $ a^4t_1t_2t_3 < x $, hence $ m_2^3 < x $ and using $ t_3 \geq \frac{w_2}{2a^2} > \frac{1}{4a} $, we get $ a^2m_2z_2 < 8x $. Let
\[
	M_2(a, x) := \left\{(m_2, z_2) \in \Z_{\geq 1}^2 : m_2^3 < x, z_2 < \frac{a}{2}+1, a^2m_2z_2 < 8x\right\}.
\]
The number of tuples in $ M_2(a, x) $ can be estimated as
\[
	\#M_2(a, x) \ll_r \frac{x}{a^2}(\log a).
\]

We have
\[
	\#\left\{\alpha \in Q(a, S_{2,3}(a)) : \Nm(\alpha)\leq x\right\} \leq \#M_1(a, x)+\#M_2(a, x) \ll_r \frac{x}{a^2}(\log a).\qedhere
\]
\end{proof}

\begin{lemma}
\label{lem:S24}
Let $ a \in \Z $, $ a \geq 8 $ and $ r > 0 $. If $ 1 \leq x \leq a^r $, then
\[
	\#\left\{\alpha \in Q(a, S_{2,4}(a)) : \Nm(\alpha) \leq x\right\} \ll_r \frac{x}{a^2}(\log a)^2.
\]
The constant implied in the estimate depends only on $ r $.
\end{lemma}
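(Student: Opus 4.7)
The plan is to follow the same template used in \Cref{lem:S21,lem:S22,lem:S23}, with the simplification that in $S_{2,4}(a)$ we have $w_2=0$ (so no case split on the size of $w_2$) but the complication that both $w_0\geq 1$ and $w_1\geq 1$, which will produce a divisor-sum of length two. Concretely, let $\alpha\in Q(a, S_{2,4}(a))$ with $\Nm(\alpha)\leq x$, write $\alpha = t_1+t_2\rho^2+t_3(\rho')^{-2}$ and $\tau(\alpha)=(m,-w,o)$ with $m=m_0(a,w)+m_1$, $o=o_0(a,w)+1$, and set $m_2:=m_1-1\geq 1$.

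The three required lower bounds for $t_1,t_2,t_3$ come essentially for free. From \Cref{lem:t1t2} one gets $t_1\geq m_2$. Since $w_2=0$ and $w_1\geq 1$, \Cref{lem:Estimatet2} yields the exact formula
\[
 t_2 \;=\; \frac{w_1}{a^2+3a+3}\;\geq\;\frac{w_1}{2a^2},
\]
using $a^2+3a+3\leq 2a^2$ for $a\geq 8$. Finally,
\[
 t_3 \;=\; w_0+\frac{(a+1)w_1}{a^2+3a+3}\;\geq\; w_0,
\]
since $w_0\geq 1$.

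Feeding these into \Cref{lem:Estimates}, the cube inequality $t_1^3<x$ gives $m_2\leq x^{1/3}$, and the key inequality $a^4 t_1 t_2 t_3<x$ becomes
\[
 a^2\, m_2\, w_0\, w_1 \;<\; 2x .
\]
Thus $\#\{\alpha\in Q(a,S_{2,4}(a)):\Nm(\alpha)\leq x\}\leq \#M(a,x)$, where
\[
 M(a,x):=\bigl\{(m_2,w_0,w_1)\in\Z_{\geq 1}^3:\ m_2^3<x,\ w_1\leq a+2,\ a^2 m_2 w_0 w_1<2x\bigr\}.
\]

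The only non-routine step is handling the joint constraint on $(w_0,w_1)$, and this is where a divisor-style sum appears. For fixed $m_2$, setting $N=2x/(a^2 m_2)$, the number of pairs $(w_0,w_1)\in\Z_{\geq 1}^2$ with $w_0 w_1\leq N$ is $\sum_{k\leq N}d(k)\ll N\log N$. Summing on $m_2$,
\[
 \#M(a,x)\ \ll\ \sum_{1\leq m_2\leq x^{1/3}}\frac{2x}{a^2 m_2}\log\!\frac{2x}{a^2 m_2}\ \ll\ \frac{x(\log 2x)^2}{a^2}\ \ll_{r}\ \frac{x(\log a)^2}{a^2},
\]
using $x\leq a^r$ in the last step. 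The potential pitfall, beyond routine bookkeeping, is being a bit careful that the divisor sum is $\asymp N\log N$ rather than $N$: this is what contributes the second factor of $\log a$ and makes the bound match (but not beat) the target $\frac{x(\log a)^2}{a^2}$, consistent with the bounds proved for the sibling cases.
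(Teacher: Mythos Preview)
Your proof is correct and follows the same template as the paper: identical lower bounds $t_1\geq m_2$, $t_2\geq w_1/(2a^2)$, $t_3\geq w_0$, leading to $a^2 m_2 w_0 w_1<2x$. The only (cosmetic) difference is in the final summation: the paper exploits the built-in range $1\leq w_1\leq a+2$ and sums $\sum_{m_2\leq x^{1/3}}\sum_{w_1\leq a+2}\frac{2x}{a^2 m_2 w_1}\ll \frac{x}{a^2}(\log 2x)(\log a)$ directly, which is marginally simpler than your divisor-sum estimate for the pairs $(w_0,w_1)$, but both arrive at the same bound.
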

\begin{proof}
Let $ \alpha \in Q(a, S_{2,4}(a)) $, let $ t_1, t_2, t_3 \in \R_{>0} $ be such that $ \alpha = t_1+t_2\rho^2+t_3(\rho')^{-2} $, and let $ \tau(\alpha) = (m, -w, o) $ where $ (m, w, o) \in \Phi(a, S_{2,4}(a)) $. Let $ m = m_0(a, w)+m_1 $, $ o = o_0(a, w)+1 $, and $ m_2 = m_1-1 $. Since $ m_1 \geq 2 $, we have $ m_2 \geq 1 $.

By \Cref{lem:t1t2}, $ t_1 \geq m_1-1 = m_2 $.

We have $ w_1 \geq 1 $ and $ w_2 = 0 $, hence by \Cref{lem:Estimatet2},
\[
	t_2 = \frac{w_1}{a^2+3a+3} \geq \frac{w_1}{2a^2}.
\]

We also have
\[
	t_3 = \frac{w}{a^2+3a+3} = w_0+\frac{(a+1)w_1}{a^2+3a+3}+\frac{w_2}{a^2+3a+3} \geq w_0.
\]

If $ \Nm(\alpha) \leq x $, then by \Cref{lem:Estimates}, $ t_1^3 < x $ and $ a^4t_1t_2t_3 < x $, hence $ m_2^3 < x $ and $ a^2m_2w_0w_1 < 2x $. Let
\[
	M(a, x) := \left\{(m_2, w) \in \Z_{\geq 1}^2 : m_2^3 < x, w_0 \geq 1, w_1 \geq 1, w_2 = 0, a^2m_2w_0w_1 < 2x\right\}.
\]
The number of tuples in $ M(a, x) $ can be estimated as
\begin{align*}
	\#M(a, x)& \leq \sum_{1 \leq m_2 \leq x^{1/3}}\sum_{1 \leq w_1 \leq a+2}\frac{2x}{a^2m_2w_1} \ll \frac{x}{a^2}\sum_{1 \leq m_2 \leq x^{1/3}}\sum_{1 \leq w_1 \leq a+2}\frac{1}{m_2w_1}\\
	& \ll \frac{x}{a^2}(\log 2x)(\log a) \ll_r \frac{x}{a^2}(\log a)^2.
\end{align*}

We have
\[
	\#\left\{\alpha \in Q(a, S_{2,4}(a)) : \Nm(\alpha) \leq x\right\} \leq \#M(a, x) \ll_r \frac{x}{a^2}(\log a)^2.\qedhere
\]
\end{proof}

\begin{lemma}
\label{lem:S25}
Let $ a \in \Z $, $ a \geq 8 $. If $ 1 \leq x $, then
\[
	\#\left\{\alpha \in Q(a, S_{2,5}(a)) : \Nm(\alpha) \leq x\right\} \ll \left(\frac{x}{a}\right)^{2/3}.
\]
\end{lemma}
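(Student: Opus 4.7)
The plan is to follow the same pattern as Lemmas \ref{lem:S21}–\ref{lem:S24}: parametrize an arbitrary element of $Q(a,S_{2,5}(a))$, derive lower bounds on $t_1,t_2,t_3$ from the arithmetic data, apply Lemma \ref{lem:Estimates} to extract two polynomial inequalities in the integers $m_2:=m_1-1$ and $w_1$, and finally count lattice points. The new feature is that the bound must come out independent of $\log a$, so I cannot afford to sum anything like $\sum 1/m_2$ up to $x^{1/3}$; I have to use both derived inequalities simultaneously in a hyperbola-type argument.

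Let $\alpha\in Q(a,S_{2,5}(a))$ with $\Nm(\alpha)\le x$ and write $\alpha = t_1+t_2\rho^2+t_3(\rho')^{-2}$, $\tau(\alpha)=(m,-w,o)$. Because $(m_1,o_1,w_0,w_1,w_2)\in S_{2,5}(a)$ satisfies $m_1\ge 2$, $o_1=1$, $w_0=0$, $w_1\ge 1$, $w_2=0$, we have $w=(a+1)w_1$ with $1\le w_1\le a+2$. By \Cref{lem:t1t2}, $t_1\ge m_2:=m_1-1\ge 1$; by \Cref{lem:Estimatet2}, $t_2=w_1/(a^2+3a+3)\ge w_1/(2a^2)$; and $t_3=(a+1)w_1/(a^2+3a+3)\ge w_1/(2a)$ for $a\ge 8$. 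Feeding these into the inequalities $a^2 t_1^2 t_3<x$ and $a^4 t_1 t_2 t_3<x$ of \Cref{lem:Estimates} gives
\[
a\,m_2^2 w_1 < 2x,\qquad a\,m_2 w_1^2 < 4x,
\]
so the count is bounded by $\#M(a,x)$ where $M(a,x):=\{(m_2,w_1)\in\Z_{\ge 1}^2:a m_2^2 w_1\le 2x,\ a m_2 w_1^2\le 4x\}$.

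The two boundary curves meet at $(m_2,w_1)\asymp((x/a)^{1/3},(x/a)^{1/3})$, which is exactly the split point I would use. For $m_2\le (x/a)^{1/3}$ the binding constraint is $w_1\le 2\sqrt{x/(am_2)}$, so
\[
\sum_{1\le m_2\le(x/a)^{1/3}}2\sqrt{\frac{x}{am_2}}\ \ll\ \sqrt{x/a}\cdot(x/a)^{1/6}=(x/a)^{2/3}.
\]
For $m_2>(x/a)^{1/3}$ the binding constraint becomes $w_1\le 2x/(am_2^2)$, valid only up to $m_2\le(2x/a)^{1/2}$, and
\[
\sum_{(x/a)^{1/3}<m_2\le(2x/a)^{1/2}}\frac{2x}{am_2^2}\ \ll\ \frac{x}{a}\cdot(x/a)^{-1/3}=(x/a)^{2/3}.
\]
Summing the two ranges yields $\#M(a,x)\ll(x/a)^{2/3}$, as required.

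The only subtlety worth flagging is the side constraint $w_1\le a+2$: for very small $m_2$ the bound $2\sqrt{x/(am_2)}$ may exceed $a+2$, in which case I would replace it by $a+2$; since this can only shrink the relevant interval, the final estimate $(x/a)^{2/3}$ is unaffected. I do not expect any genuine obstacle here — the non-trivial step is simply recognizing that both inequalities have to be used together, which is what avoids the spurious $\log$ factor that a one-dimensional argument (as in the previous subsections) would produce.
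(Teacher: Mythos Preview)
Your proof is correct and follows essentially the same approach as the paper: you derive the identical lower bounds on $t_1,t_2,t_3$, apply the same two inequalities $a^2t_1^2t_3<x$ and $a^4t_1t_2t_3<x$ from \Cref{lem:Estimates} to obtain $am_2^2w_1<2x$ and $am_2w_1^2<4x$, and then count lattice points in the resulting region. The only cosmetic difference is in the splitting of $M(a,x)$: the paper splits according to $m_2\le w_1$ versus $m_2>w_1$ (using one inequality to bound the summation range and the other to bound $w_1$), whereas you split at $m_2\le(x/a)^{1/3}$ versus $m_2>(x/a)^{1/3}$; both organize the same hyperbola-type count and yield the same $(x/a)^{2/3}$ bound.
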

\begin{proof}
Let $ \alpha \in Q(a, S_{2,5}(a)) $, let $ t_1, t_2, t_3 \in \R_{>0} $ be such that $ \alpha = t_1+t_2\rho^2+t_3(\rho')^{-2} $, and let $ \tau(\alpha) = (m, -w, o) $, where $ (m, w, o) \in \Phi(a, S_{2,5}(a)) $. Let $ m = m_0(a, w)+m_1 $, $ o = o_0(a, w)+1 $, and $ m_2 = m_1-1 $. Since $ m_1 \geq 2 $, we have $ m_2 \geq 1 $.

By \Cref{lem:t1t2}, $ t_1 \geq m_1-1 = m_2 $.

We have $ w_1 \geq 1 $ and $ w_2 = 0 $, hence by \Cref{lem:Estimatet2},
\[
	t_2 = \frac{w_1}{a^2+3a+3} \geq \frac{w_1}{2a^2}.
\]

We also have
\[
	t_3 = \frac{w}{a^2+3a+3} = w_0+\frac{(a+1)w_1}{a^2+3a+3}+\frac{w_2}{a^2+3a+3} = \frac{(a+1)w_1}{a^2+3a+3} \geq \frac{w_1}{2a}.
\]

By \Cref{lem:Estimates}, $ a^2t_1^2t_3 < x $ and $ a^4t_1t_2t_3 < x $, hence $ am_2^2w_1 < 2x $ and $ am_2w_1^2 < 4x $. Let
\[
	M(a, x) := \#\left\{(m_2, w_1) \in \Z_{\geq 1}^2 : am_2^2w_1 < 2x, am_2w_1^2 < 4x\right\}.
\]

We further split $ M(a, x) $ into two sets
\begin{align*}
	M_1(a, x) := \#\left\{(m_2, w_1) \in M(a, x) : m_2 \leq w_1\right\},\\
	M_2(a, x) := \#\left\{(m_2, w_1) \in M(a, x) : m_2 > w_1\right\}.
\end{align*}
If $ (m_2, w_1) \in M_1(a, x) $, then $ am_2^3 < 2x $, hence $ m_2 < (2x/a)^{1/3} $ and
\[
	\#M_1(a, x) \leq \sum_{1 \leq m_2 < (2x/a)^{1/3}}\left(\frac{4x}{am_2}\right)^{1/2} \ll \left(\frac{x}{a}\right)^{1/2}\sum_{1 \leq m_2 < (2x/a)^{1/3}}m_2^{-1/2} \ll \left(\frac{x}{a}\right)^{1/2}\left(\frac{2x}{a}\right)^{1/6} \ll \left(\frac{x}{a}\right)^{2/3}.
\]

If $ (m_2, w_1) \in M_2(a, x) $, then $ aw_1^3 < 4x $, hence $ w_1 < (4x/a)^{1/3} $ and similarly as in the case of $ M_1(a, x) $ we get
\[
	\#M_2(a, x) \leq \left(\frac{x}{a}\right)^{2/3}.
\]

Finally,
\[
	\#\left\{\alpha \in Q(a, S_{2,5}(a)) : \Nm(\alpha) \leq x\right\} \leq \#M(a, x) = \#M_1(a, x)+\#M_2(a, x) \ll \left(\frac{x}{a}\right)^{2/3}.\qedhere
\]
\end{proof}

\begin{lemma}
\label{lem:S26}
Let $ a \in \Z $, $ a \geq 8 $ and $ r > 0 $. If $ 1 \leq x \leq a^r $, then
\[
	\#\left\{\alpha \in Q(a, S_{2,6}(a)) : \Nm(\alpha) \leq x\right\} \ll_r \frac{x}{a^4}(\log a).
\]
The constant implied in the estimate depends only on $ r $.
\end{lemma}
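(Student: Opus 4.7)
The approach mirrors the pattern established in \Cref{lem:S21,lem:S22,lem:S23,lem:S24,lem:S25}: extract lower bounds for $t_1$, $t_2$, $t_3$ from the structure of $S_{2,6}(a)$, feed these into the inequalities of \Cref{lem:Estimates}, and count the resulting integer tuples. The defining feature of $S_{2,6}(a)$ is that $w_1 = w_2 = 0$ and $w_0 \geq 1$, which forces $w = (a^2+3a+3)w_0$, so $w$ is a positive multiple of $a^2+3a+3$. This is the simplest possible configuration of $w$ and accounts for the stronger bound $x/a^4$ compared with the $x/a^2$ bounds in the previous subsections.

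Concretely, let $\alpha \in Q(a, S_{2,6}(a))$ with $\alpha = t_1 + t_2\rho^2 + t_3(\rho')^{-2}$ and $\tau(\alpha) = (m,-w,o)$, where $m = m_0(a,w) + m_1$, $o = o_0(a,w) + 1$, and set $m_2 = m_1 - 1 \geq 1$. From \Cref{lem:t1t2} I get $t_1 \geq m_2$. From \Cref{lem:Estimatet2} applied to the case $w_1 = w_2 = 0$ I obtain $t_2 = 1$ exactly. Finally, $t_3 = w/(a^2+3a+3) = w_0 \geq 1$. Plugging these into \Cref{lem:Estimates}, the inequality $t_1^3 < x$ gives $m_2^3 < x$ and the inequality $a^4 t_1 t_2 t_3 < x$ gives $a^4 m_2 w_0 < x$.

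Therefore the count is bounded by
\[
\#\left\{(m_2, w_0) \in \Z_{\geq 1}^2 : m_2^3 < x,\ a^4 m_2 w_0 < x\right\},
\]
which I estimate by first summing over $w_0$ for each fixed $m_2$:
\[
\sum_{1 \leq m_2 \leq x^{1/3}} \frac{x}{a^4 m_2} \ll \frac{x}{a^4}\log(2x) \ll_r \frac{x}{a^4}\log a,
\]
using the hypothesis $x \leq a^r$ in the last step to turn $\log(2x)$ into a constant multiple of $\log a$. This completes the proof.

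There is no real obstacle here; this lemma is essentially the cleanest case of the six, since $w_1 = w_2 = 0$ eliminates the need for the subdivision $w_2 \leq a/2$ versus $w_2 > a/2$ used in \Cref{lem:S21,lem:S22,lem:S23}. The only thing to double-check is that $t_2 = 1$ really follows from \Cref{lem:Estimatet2} (it does, from the first branch of the case distinction) and that the bound $a^4 t_1 t_2 t_3 < x$ from \Cref{lem:Estimates} gives the sharper $a^4 m_2 w_0$ constraint that drives the improvement from $x/a^2$ to $x/a^4$.
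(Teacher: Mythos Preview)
Your proof is correct and follows essentially the same approach as the paper: extract $t_1 \geq m_2$, $t_2 = 1$, and $t_3 = w_0$ from \Cref{lem:t1t2} and \Cref{lem:Estimatet2}, apply \Cref{lem:Estimates} to get $m_2^3 < x$ and $a^4 m_2 w_0 < x$, and sum $\frac{x}{a^4 m_2}$ over $m_2 \leq x^{1/3}$. Your observation that $t_3 = w_0$ exactly (rather than merely $t_3 \geq w_0$) is slightly sharper than the paper but changes nothing in the estimate.
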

\begin{proof}
Let $ \alpha \in Q(a, S_{2,6}(a)) $, let $ t_1, t_2, t_3 \in \R_{>0} $ be such that $ \alpha = t_1+t_2\rho^2+t_3(\rho')^{-2} $, and let $ \tau(\alpha) = (m, -w, o) $, where $ (m, w, o) \in \Phi(a, S_{2,6}(a)) $. Let $ m = m_0(a, w)+m_1 $, $ o = o_0(a, w)+1 $, and $ m_2 = m_1-1 $. Since $ m_1 \geq 2 $, we have $ m_2 \geq 1 $.

By \Cref{lem:t1t2}, $ t_1 \geq m_1-1 = m_2 $.

We have $ w_1 = 0 $ and $ w_2 = 0 $, hence by \Cref{lem:Estimatet2}, $ t_2 = 1 $. We also have
\[
	t_3 = \frac{w}{a^2+3a+3} = w_0+\frac{(a+1)w_1}{a^2+3a+3}+\frac{w_2}{a^2+3a+3} \geq w_0.
\]

By \Cref{lem:Estimates}, $ t_1^3 < x $ and $ a^4t_1t_2t_3 < x $, hence $ m_2^3 < x $ and $ a^4m_2w_0 < x $. Let
\[
	M(a, x) := \left\{(m_2, w_0) \in \Z_{\geq 1}^2 : m_2^3 < x, a^4m_2w_0 < x\right\}.
\]

The number of pairs in $ M(a, x) $ can be estimated as
\[
	\#M(a, x) \leq \sum_{1 \leq m_2 < x^{1/3}}\frac{x}{a^4m_2} \ll \frac{x}{a^4}(\log 2x) \ll_r \frac{x}{a^4}(\log a).
\]

We have
\[
	\#\left\{\alpha \in Q(a, S_{2,6}(a)) : \Nm(\alpha) \leq x\right\} \leq \#M(a,x) \ll_r \frac{x}{a^4}(\log a).\qedhere
\]
\end{proof}

\begin{prop}
\label{prop:S2}
Let $ a \in \Z $, $ a \geq 8 $ and $ r > 0 $. If $ 1 \leq x \leq a^r $, then
\[
	\#\left\{\alpha \in Q(a, S_2(a)) : \Nm(\alpha) \leq x\right\} \ll_r \frac{x}{a^2}(\log a)^2+\left(\frac{x}{a}\right)^{2/3}.
\]
The constant implied in the estimate depends only on $ r $.
\end{prop}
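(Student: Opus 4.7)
The plan is to reproduce the decomposition-and-sum strategy that established \Cref{prop:S1}. I first observe that the six subsets $S_{2,1}(a), \ldots, S_{2,6}(a)$ partition $S_2(a)$, because they exhaust every possibility for which of the coordinates $w_0, w_1, w_2$ of $\phi_a(w) \in W_1(a)$ are zero versus strictly positive, subject to the constraint $w \geq 1$ (which forces at least one of the three to be positive). Since membership in $Q(a, S)$ is determined only through the condition $(m_1, o_1, \phi_a(w)) \in S$, this partition is inherited, so
\[
Q(a, S_2(a)) = \bigsqcup_{i=1}^{6} Q(a, S_{2,i}(a)),
\]
and the counting function of interest is the sum of the six corresponding counts.

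Next, I invoke \Cref{lem:S21,lem:S22,lem:S23,lem:S24,lem:S25,lem:S26} in turn. The pieces indexed by $i = 1, 2, 4$ each contribute $\ll_r (x/a^2)(\log a)^2$; the pieces $i = 3, 6$ give the smaller bounds $\ll_r (x/a^2)\log a$ and $\ll_r (x/a^4)\log a$, both absorbed by $(x/a^2)(\log a)^2$; and the piece $i = 5$ gives $\ll (x/a)^{2/3}$, which is precisely the second term in the claimed estimate. Summing the six bounds yields the proposition.

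Since every technical inequality has already been discharged inside the six preceding lemmas, no further analytic work is required. The only genuine check is that the six cases truly cover $S_2(a)$ with no overlap, and this is immediate from the disjoint case split on whether each of $w_0, w_1, w_2$ is zero, together with the observation that the triple $(0,0,0)$ is excluded because $w \geq 1$. Thus the step I expect to be most delicate is not an analytic one but a purely combinatorial verification of the partition; the rest is bookkeeping.
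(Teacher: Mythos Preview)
Your proposal is correct and follows essentially the same approach as the paper: decompose $S_2(a)$ into the six subsets $S_{2,1}(a),\ldots,S_{2,6}(a)$, infer the corresponding disjoint decomposition of $Q(a,S_2(a))$, and sum the bounds from \Cref{lem:S21,lem:S22,lem:S23,lem:S24,lem:S25,lem:S26}. Your additional remarks on which terms are absorbed and the combinatorial verification of the partition are accurate and make explicit what the paper leaves implicit.
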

\begin{proof}
We have $ S_2(a) = S_{2,1}(a)\sqcup S_{2,2}(a)\sqcup \dots \sqcup S_{2,6}(a) $, hence $ Q(a, S_2(a)) = Q(a, S_{2,1}(a))\sqcup Q(a, S_{2,2}(a))\sqcup \dots\sqcup Q(a, S_{2,6}(a)) $. The proposition now follows from \Crefrange{lem:S21}{lem:S26}.
\end{proof}

\subsection{Estimate for \texorpdfstring{$ R(a, S_3(a)) $}{R(a,S3(a))}}
\label{sec:S3}

We recall that
\[
	S_3(a) = \left\{(m_1, o_1, w_0, w_1, w_2) \in \Z^2 \times W_2(a) : m_1 = 1, o_1 \geq 2\right\}.
\]

We write $ S_3(a) = S_{3,1}(a)\sqcup S_{3,2}(a)\sqcup \dots\sqcup S_{3,5}(a) $, where
\begin{align*}
	S_{3,1}(a)& := \left\{(m_1, o_1, w_0, w_1, w_2) \in S_3(a) : w_0 \geq 1, w_1 \geq 0, w_2 \geq 1\right\},\\
	S_{3,2}(a) & := \left\{(m_1, o_1, w_0, w_1, w_2) \in S_3(a) : w_0 = 0, w_1 \geq 1, w_2 \geq 1\right\},\\
	S_{3,3}(a)& := \left\{(m_1, o_1, w_0, w_1, w_2) \in S_3(a) : w_0 = 0, w_1 = 0, w_2 \geq 1\right\},\\
	S_{3,4}(a)& := \left\{(m_1, o_1, w_0, w_1, w_2) \in S_3(a) : w_0 \geq 1, w_1 \geq 0, w_2 = 0\right\},\\
	S_{3,5}(a)& := \left\{(m_1, o_1, w_0, w_1, w_2) \in S_3(a) : w_0 = 0, w_1 \geq 1, w_2 = 0\right\}.
\end{align*}

\begin{lemma}
\label{lem:S3t1t2}
Let $ a \in \Z_{\geq 1} $, $ w \in \Z_{\geq 1} $, $ m = m_0(a,w)+1 $, $ o \in \Z $, and $ (w_0, w_1, w_2) = \psi_a(w) $. If $ \alpha = t_1+t_2\rho^2+t_3(\rho')^{-2} $ is such that $ \tau(\alpha) = (m, -w, o) $, then
\[
	t_1 = \begin{cases}
		1-\frac{w_1}{a^2+3a+3},&\text{if }w_2 = 0,\\
		\frac{(a+1)w_2-w_1}{a^2+3a+3},&\text{if }w_2 \geq 1.
	\end{cases}
\]
\end{lemma}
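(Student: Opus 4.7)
The plan is to compute $t_1$ directly from the defining relation $t_1 = m + \frac{a+1}{a^2+3a+3}w$ by plugging in $m = m_0(a,w)+1$ and identifying the resulting expression with the fractional part computed in Lemma \ref{lem:W3}. The proof is essentially parallel to that of Lemma \ref{lem:Estimatet2}, with the roles of $\phi_a$ and $\psi_a$ swapped because the hypothesis now reads $(w_0,w_1,w_2) = \psi_a(w)$ (consistent with the decomposition $W_2(a)$ used in the definition of $S_3(a)$).

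Setting $y := -\frac{a+1}{a^2+3a+3}w$, so that $m_0(a,w) = \lfloor y \rfloor$, the substitution $m = \lfloor y \rfloor + 1$ gives
\[
	t_1 = \lfloor y \rfloor + 1 - y = 1 - \bigl(y - \lfloor y \rfloor\bigr).
\]
Now apply Lemma \ref{lem:W3}, which expresses $y - \lfloor y \rfloor$ in the two cases $w_2=0$ and $w_2 \geq 1$. In the first case the fractional part is $\frac{w_1}{a^2+3a+3}$, yielding $t_1 = 1 - \frac{w_1}{a^2+3a+3}$ at once. In the second case the fractional part is $1 + \frac{w_1 - (a+1)w_2}{a^2+3a+3}$, and the leading $1$ cancels against the outer $1$, leaving $t_1 = \frac{(a+1)w_2 - w_1}{a^2+3a+3}$.

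There is no real obstacle: the content is entirely bookkeeping, and the only thing worth double-checking is that the correct case of Lemma \ref{lem:W3} is invoked (which is determined by $w_2$), and that the sign bookkeeping when rewriting $\lfloor y\rfloor + 1 - y$ as $1$ minus a fractional part is done correctly. The hypothesis $m_1 = 1$ implicit in $S_3(a)$ is exactly what licenses the substitution $m = m_0(a,w) + 1$.
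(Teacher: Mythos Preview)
Your proof is correct and essentially identical to the paper's: both compute $t_1 = 1 - \bigl(-\tfrac{a+1}{a^2+3a+3}w - \lfloor -\tfrac{a+1}{a^2+3a+3}w\rfloor\bigr)$ from $m = m_0(a,w)+1$ and then invoke \Cref{lem:W3} for the two cases $w_2=0$ and $w_2\geq 1$. The only cosmetic difference is your introduction of the shorthand $y$, which does not change the argument.
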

\begin{proof}
We have
\[
	t_1 = m+\frac{a+1}{a^2+3a+3}w = m_0+1+\frac{a+1}{a^2+3a+3}w = 1-\left(-\frac{a+1}{a^2+3a+3}w-\left\lfloor-\frac{a+1}{a^2+3a+3}w\right\rfloor\right).
\]

By \Cref{lem:W3},
\[
	-\frac{a+1}{a^2+3a+3}w-\left\lfloor-\frac{a+1}{a^2+3a+3}w\right\rfloor = \begin{cases}
		\frac{w_1}{a^2+3a+3},&w_2 = 0,\\
		1+\frac{w_1-(a+1)w_2}{a^2+3a+3},&w_2 \geq 1
	\end{cases}
\]
and the expression for $ t_1 $ follows.
\end{proof}

\begin{lemma}
\label{lem:S31}
Let $ a \in \Z $, $ a \geq 8 $ and $ r > 0 $. If $ 1 \leq x \leq a^r $, then
\[
	\#\left\{\alpha \in R(a, S_{3,1}(a)) : \Nm(\alpha) \leq x\right\} \ll_r \frac{x}{a^2}(\log a)^2.
\]
The constant implied in the estimate depends only on $ r $.
\end{lemma}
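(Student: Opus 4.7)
The plan is to mirror the argument used for \Cref{lem:S21} and its analogues. Let $ \alpha \in R(a, S_{3,1}(a)) $ with $ \tau(\alpha) = (m, -w, o) $ and write $ \alpha = t_1+t_2\rho^2+t_3(\rho')^{-2} $ with $ t_1, t_2, t_3 > 0 $. The hypothesis $ (m_1, o_1, \psi_a(w)) \in S_{3,1}(a) $ forces $ m_1 = 1 $, $ o_1 \geq 2 $, $ w_0 \geq 1 $, $ w_2 \geq 1 $, and hence $ w_1 \leq a $ (since $ w_2 = 0 $ is required when $ w_1 = a+1 $). Set $ o_2 := o_1-1 \geq 1 $ and $ b := (a+1)w_2-w_1 $; then $ 1 \leq b \leq (a+1)^2 $.

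First I would collect lower bounds on the $ t_i $. \Cref{lem:t1t2} yields $ t_2 \geq o_2 $; \Cref{lem:S3t1t2}, applicable because $ w_2 \geq 1 $, yields $ t_1 = b/(a^2+3a+3) $; and directly $ t_3 = w/(a^2+3a+3) \geq w_0 $. Assuming $ \Nm(\alpha) \leq x $, \Cref{lem:Estimates} gives $ o_2^3 \leq t_2^3 < x $ together with $ a^4t_1t_2t_3 < x $. The latter, combined with $ a^4/(a^2+3a+3) \geq a^2/2 $ (valid since $ a \geq 8 $), yields
\[
	a^2\,b\,o_2\,w_0 < 2x.
\]

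The next step is a counting observation: for each integer $ b \in \{1, \ldots, (a+1)^2\} $ there is at most one pair $ (w_1, w_2) $ with $ 0 \leq w_1 \leq a $, $ 1 \leq w_2 \leq a+1 $, and $ (a+1)w_2-w_1 = b $. Indeed, any such $ w_2 $ must lie in the interval $ [b/(a+1), (b+a)/(a+1)] $ of length $ a/(a+1) < 1 $, and $ w_1 $ is then determined. Consequently,
\[
	\#\left\{\alpha \in R(a, S_{3,1}(a)) : \Nm(\alpha) \leq x\right\} \leq \sum_{o_2=1}^{\lfloor x^{1/3}\rfloor}\sum_{b=1}^{(a+1)^2}\left\lfloor\frac{2x}{a^2bo_2}\right\rfloor \ll \frac{x}{a^2}(\log x)(\log a) \ll_r \frac{x}{a^2}(\log a)^2,
\]
where the last inequality uses $ \log x \leq r\log a $.

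I expect the main obstacle to be choosing the right parametrization. The naive route of summing over $ (w_1, w_2) $ independently risks losing an extra factor of $ a $; introducing the single variable $ b $ and using that its fibers have size at most one collapses the double sum to a harmonic sum in $ b $, which is precisely what supplies the second $ \log a $ factor and matches the target bound.
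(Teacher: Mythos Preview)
Your argument is correct and in fact a bit cleaner than the paper's. The paper proves \Cref{lem:S31} by splitting into the two subcases $w_2 \geq 2$ and $w_2 = 1$: in the first it sets $w_2' = w_2-1$, bounds $t_1 \geq w_2'/(2a)$, and obtains $a^3o_2w_0w_2' < 2x$, then sums freely over $w_1$ (contributing a factor of $a$) and harmonically over $w_2'$; in the second it writes $w_1 = (a+1)-z_1$, bounds $t_1 \geq z_1/(2a^2)$, and obtains $a^2o_2w_0z_1 < 2x$. Your approach avoids the case split entirely by working directly with the exact value $t_1 = b/(a^2+3a+3)$ and exploiting the injectivity of $(w_1,w_2)\mapsto b = (a+1)w_2-w_1$ on the box $0\leq w_1\leq a$, $1\leq w_2\leq a+1$. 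This collapses the $(w_1,w_2)$-sum to a single harmonic sum over $b\leq (a+1)^2$, giving the required $(\log a)$ factor in one stroke. Both routes yield the same bound with the same lemmas (\Cref{lem:t1t2}, \Cref{lem:S3t1t2}, \Cref{lem:Estimates}); yours simply packages the two cases of the paper into one change of variable.
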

\begin{proof}
Let $ \alpha \in R(a, S_{3,1}(a)) $, let $ t_1, t_2, t_3 \in \R_{>0} $ be such that $ \alpha = t_1+t_2\rho^2+t_3(\rho')^{-2} $, and let $ \tau(\alpha) = (m, -w, o) $, where $ (m, w, o) \in \Psi(a, S_{3,1}(a)) $. Let $ m = m_0(a, w)+1 $, $ o = o_0(a, w)+o_1 $, and $ o_2 = o_1-1 $. Since $ o_1 \geq 2 $, we have $ o_2 \geq 1 $. Since $ w_2 \geq 1 $, by \Cref{lem:S3t1t2},
\[
	t_1 = \frac{(a+1)w_2-w_1}{a^2+3a+3}.
\]
By \Cref{lem:t1t2}, $ t_2 \geq o_1-1 = o_2 $. We also have
\[
	t_3 = \frac{w}{a^2+3a+3} = w_0+\frac{(a+2)w_1}{a^2+3a+3}+\frac{w_2}{a^2+3a+3} \geq w_0.
\]

We note that $ 0 \leq w_1 \leq a $ because $ w_2 \geq 1 $. Let us distinguish two cases. First, if $ w_2 \geq 2 $, then we let $ w_2' = w_2-1 $. We have
\[
	t_1 = \frac{(a+1)(1+w_2')-w_1}{a^2+3a+3} = \frac{a+1-w_1}{a^2+3a+3}+\frac{(a+1)w_2'}{a^2+3a+3} \geq \frac{w_2'}{2a}.
\]
If $ \Nm(\alpha) \leq x $, then by \Cref{lem:Estimates}, $ t_2^3 < x $ and $ a^4t_1t_2t_3 < x $, hence $ o_2^3 < x $ and $ a^3o_2w_0w_2' < 2x $. Let
\[
	M_1(a, x) := \left\{(o_2, w) \in \Z_{\geq 1}^2 : o_2^3 < x, w_0 \geq 1, w_2 = w_2'+1, 1 \leq w_2' \leq a, a^3o_2w_0w_2' < 2x\right\}.
\]
The number of pairs in $ M_1(a, x) $ can be estimated as
\begin{align*}
	\#M_1(a, x)& \leq \sum_{1 \leq o_2 < x^{1/3}}\sum_{0 \leq w_1 \leq a}\sum_{1 \leq w_2' \leq a}\frac{2x}{a^3o_2w_2'} \ll \frac{x}{a^2}\sum_{1 \leq o_2 < x^{1/3}}\sum_{1 \leq w_2' \leq a}\frac{1}{o_2w_2'}\\
	& \ll \frac{x}{a^2}(\log 2x)(\log a) \ll_r \frac{x}{a^2}(\log a)^2.
\end{align*}

Secondly, if $ w_2 = 1 $, then we let $ w_1 = (a+1)-z_1 $, where $ 1 \leq z_1 \leq a+1 $. We have
\[
	t_1 = \frac{(a+1)-w_1}{a^2+3a+3} = \frac{z_1}{a^2+3a+3} \geq \frac{z_1}{2a^2}.
\]
If $ \Nm(\alpha) \leq x $, then by \Cref{lem:Estimates}, $ t_2^3 < x $ and $ a^4t_1t_2t_3 < x $, hence $ o_2^3 < x $ and $ a^2o_2w_0z_1 < 2x $. Let
\[
	M_2(a, x) := \left\{(o_2, w) \in \Z_{\geq 1}^2 : o_2^3 < x, w_0 \geq 1, w_1 = (a+1)-z_1, 1 \leq z_1 \leq a+1, w_2 = 1, a^2o_2w_0z_1 < 2x\right\}.
\]
The number of pairs in $ M_2(a, x) $ can be estimated as
\begin{align*}
	\#M_2(a, x)& \leq \sum_{1 \leq o_2 < x^{1/3}}\sum_{1 \leq z_1 \leq a+1}\frac{2x}{a^2o_2z_1} \ll \frac{x}{a^2}\sum_{1 \leq o_2 < x^{1/3}}\sum_{1 \leq z_1 \leq a+1}\frac{1}{o_2z_1}\\
	& \ll \frac{x}{a^2}(\log 2x)(\log a) \ll_r \frac{x}{a^2}(\log a)^2.
\end{align*}

Finally,
\[
	\#\left\{\alpha \in R(a, S_{3,1}(a)) : \Nm(\alpha) \leq x\right\} \leq \#M_1(a, x)+\#M_2(a, x) \ll_r \frac{x}{a^2}(\log a)^2.\qedhere
\]
\end{proof}

\begin{lemma}
\label{lem:S32}
Let $ a \in \Z $, $ a \geq 8 $ and $ r > 0 $. If $ 1 \leq x \leq a^r $, then
\[
	\#\left\{\alpha \in R(a, S_{3,2}(a)) : \Nm(\alpha) \leq x\right\} \ll_r \frac{x}{a^2}(\log a)^2.
\]
The constant implied in the estimate depends only on $ r $.
\end{lemma}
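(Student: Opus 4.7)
The plan is to follow the template of \Cref{lem:S31} almost verbatim, noting that $ S_{3,2}(a) $ differs from $ S_{3,1}(a) $ only in having $ w_0 = 0 $ and $ w_1 \geq 1 $. I would let $ \alpha \in R(a,S_{3,2}(a)) $, write $ \alpha = t_1+t_2\rho^2+t_3(\rho')^{-2} $ with $ \tau(\alpha) = (m,-w,o) \in \Psi(a,S_{3,2}(a)) $, and decompose $ m = m_0(a,w)+1 $, $ o = o_0(a,w)+o_1 $, $ o_2 = o_1-1 \geq 1 $. Since $ w_2 \geq 1 $, \Cref{lem:S3t1t2} gives $ t_1 = ((a+1)w_2-w_1)/(a^2+3a+3) $, and \Cref{lem:t1t2} gives $ t_2 \geq o_2 $. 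The essential change from \Cref{lem:S31} is in $ t_3 $: since $ w_0=0 $ but $ w_1\geq 1 $,
\[
t_3 = \frac{(a+2)w_1+w_2}{a^2+3a+3} \geq \frac{w_1}{2a}.
\]

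I would then split into two cases based on $ w_2 $. For $ w_2 \geq 2 $, setting $ w_2' = w_2-1 \geq 1 $ and using the constraint $ w_1 \leq a $ (forced by $ w_2 \geq 1 $), the identity $ (a+1)w_2-w_1 = (a+1-w_1)+(a+1)w_2' $ yields $ t_1 \geq w_2'/(2a) $. Then the inequalities $ a^4 t_1 t_2 t_3 < x $ and $ t_2^3 < x $ from \Cref{lem:Estimates} produce $ o_2^3 < x $ and $ a^2 o_2 w_1 w_2' \ll x $. Summing first over $ w_2' \in [1,a] $ against the latter constraint, then over $ w_1 \in [1,a] $ and $ o_2 < x^{1/3} $, yields a double harmonic sum bounded by $ \ll_r (x/a^2)(\log a)^2 $.

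For $ w_2 = 1 $, I would substitute $ z_1 = (a+1)-w_1 \in [1,a] $, giving $ t_1 = z_1/(a^2+3a+3) $ and $ t_3 \geq (a+1-z_1)/(2a) $, so that $ a^4 t_1 t_2 t_3 < x $ becomes $ a z_1(a+1-z_1) o_2 \ll x $. This is the main technical obstacle: since $ t_3 $ now depends on $ w_1 $ rather than on a free parameter like $ w_0 $ in \Cref{lem:S31}, the variables $ z_1 $ and $ w_1 $ are coupled through their product. I would resolve this using the partial fraction identity
\[
\frac{1}{z_1(a+1-z_1)} = \frac{1}{a+1}\left(\frac{1}{z_1}+\frac{1}{a+1-z_1}\right),
\]
which splits the sum into two single harmonic sums and gives a bound $ \ll (x/a^2)\log a $, absorbed by the first case. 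Adding the two case bounds would complete the proof.
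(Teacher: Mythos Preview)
Your proposal is correct. The treatment of the case $w_2 \geq 2$ is identical to the paper's, including the substitution $w_2' = w_2 - 1$ and the resulting double harmonic sum.

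For the case $w_2 = 1$, your route differs from the paper's. The paper makes a further dichotomy on $w_1$: when $w_1 \leq a/2$ it uses $t_1 \geq 1/(4a)$ to deduce $a^2 o_2 w_1 \ll x$, and when $w_1 > a/2$ it uses $t_3 > 1/3$ together with $t_1 \geq z_1/(2a^2)$ to deduce $a^2 o_2 z_1 \ll x$; each subcase gives a single harmonic sum bounded by $\ll_r (x/a^2)\log a$. You instead keep the full product constraint $a z_1(a+1-z_1) o_2 \ll x$ and handle it in one stroke via the partial fraction identity. This is a legitimate and slightly slicker alternative: it avoids the extra case split at the cost of one algebraic identity, and yields the same bound $\ll (x/a^2)\log a$ for this subcase.
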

\begin{proof}
Let $ \alpha \in R(a, S_{3,2}(a)) $, let $ t_1, t_2, t_3 \in \R_{>0} $ be such that $ \alpha = t_1+t_2\rho^2+t_3(\rho')^{-2} $, and let $ \tau(\alpha) = (m, -w, o) $, where $ (m, w, o) \in \Psi(a, S_{3,2}(a)) $. Let $ m = m_0(a, w)+1 $, $ o = o_0(a, w)+o_1 $, and $ o_2 = o_1-1 $. Since $ o_1 \geq 2 $, we have $ o_2 \geq 1 $. Since $ w_2 \geq 1 $, by \Cref{lem:S3t1t2},
\[
	t_1 = \frac{(a+1)w_2-w_1}{a^2+3a+3}.
\]
By \Cref{lem:t1t2}, $ t_2 \geq o_1-1 = o_2 $. We also have
\[
	t_3 = \frac{w}{a^2+3a+3} = w_0+\frac{(a+2)w_1}{a^2+3a+3}+\frac{w_2}{a^2+3a+3} \geq \frac{(a+2)w_1}{a^2+3a+3} \geq \frac{w_1}{2a}.
\]

Let us distinguish two cases. First, if $ w_2 \geq 2 $, then we let $ w_2' = w_2-1 $. We have
\[
	t_1 = \frac{(a+1)(1+w_2')-w_1}{a^2+3a+3} = \frac{a+1-w_1}{a^2+3a+3}+\frac{(a+1)w_2'}{a^2+3a+3} \geq \frac{w_2'}{2a}.
\]
If $ \Nm(\alpha) \leq x $, then by \Cref{lem:Estimates}, $ t_2^3 < x $ and $ a^4t_1t_2t_3 < x $, hence $ o_2^3 < x $ and $ a^2o_2w_1w_2' < 4x $. Let
\[
	M_1(a, x) := \left\{(o_2, w) \in \Z_{\geq 1}^2 : o_2^3 < x, w_0 = 0, w_1 \geq 1, w_2 = w_2'+1, 1 \leq w_2' \leq a, a^2o_2w_1w_2' < 4x\right\}.
\]
The number of pairs in $ M_1(a, x) $ can be estimated as
\begin{align*}
	\#M_1(a, x)& \leq \sum_{1 \leq o_2 < x^{1/3}}\sum_{1 \leq w_2' \leq a}\frac{4x}{a^2o_2w_2'} \ll \frac{x}{a^2}\sum_{1 \leq o_2 < x^{1/3}}\sum_{1 \leq w_2' \leq a}\frac{1}{o_2w_2'}\\
	& \ll \frac{x}{a^2}(\log 2x)(\log a) \ll_r \frac{x}{a^2}(\log a)^2.
\end{align*}

Secondly, if $ w_2 = 1 $, then we further distinguish two cases. In the case $ w_1 \leq \frac{a}{2} $, we get
\[
	t_1 = \frac{(a+1)-w_1}{a^2+3a+3} > \frac{a+1}{2(a^2+3a+3)} \geq \frac{1}{4a}.
\]

If $ \Nm(\alpha) \leq x $, then by \Cref{lem:Estimates}, $ t_2^3 < x $ and $ a^4t_1t_2t_3 < x $, hence $ o_2^3 < x $ and $ a^2o_2w_1 < 8x $. Let
\[
	M_2(a, x) := \left\{(o_2, w) \in \Z_{\geq 1}^2 : o_2^3 < x, w_0 = 0, 1 \leq w_1 \leq \frac{a}{2}, w_2 = 1, a^2o_2w_1 < 8x\right\}.
\]
The number of pairs in $ M_2(a, x) $ can be estimated as
\[
	\#M_2(a, x) \leq \sum_{1 \leq o_2 < x^{1/3}}\frac{8x}{a^2o_2} \ll \frac{x}{a^2}\sum_{1 \leq o_2 < x^{1/3}}\frac{1}{o_2} \ll \frac{x}{a^2}(\log 2x) \ll_r \frac{x}{a^2}(\log a).
\]

In the case $ w_1 > \frac{a}{2} $, let $ w_1 = (a+1)-z_1 $, where $ 1 \leq z_1 < \frac{a}{2}+1 $. We have
\[
	t_1 = \frac{(a+1)-w_1}{a^2+3a+3} > \frac{z_1}{a^2+3a+3} \geq \frac{z_1}{2a^2}
\]
and
\[
	t_3 \geq \frac{(a+2)w_1}{a^2+3a+3} > \frac{(a+2)a}{2(a^2+3a+3)} > \frac{1}{3}.
\]

If $ \Nm(\alpha) \leq x $, then by \Cref{lem:Estimates}, $ t_2^3 < x $ and $ a^4t_1t_2t_3 < x $, hence $ o_2^3 < x $ and $ a^2o_2z_1 < 6x $. Let
\[
	M_3(a, x) := \left\{(o_2, w) \in \Z_{\geq 1}^2 : o_2^3 < x, w_0 = 0, w_1 = (a+1)-z_1, 1 \leq z_1 < \frac{a}{2}+1, w_2 = 1, a^2o_2z_1 < 6x\right\}.
\]
The number of pairs can be estimated as
\[
	\#M_3(a, x) \leq \sum_{1 \leq o_2 < x^{1/3}}\frac{6x}{a^2o_2} \ll \frac{x}{a^2}\sum_{1 \leq o_2 < x^{1/3}}\frac{1}{o_2} \ll \frac{x}{a^2}(\log 2x) \ll_r \frac{x}{a^2}(\log a). 
\]

Finally,
\[
	\#\left\{\alpha \in R(a, S_{3,2}(a)) : \Nm(\alpha) \leq x\right\} \leq \#M_1(a, x)+\#M_2(a, x)+\#M_3(a, x) \ll_r \frac{x}{a^2}(\log a)^2.\qedhere
\]
\end{proof}

\begin{lemma}
\label{lem:S33}
Let $ a \in \Z $, $ a \geq 8 $. If $ 1 \leq x $, then
\[
	\#\left\{\alpha \in R(a, S_{3,3}(a)) : \Nm(\alpha)\leq x\right\} \ll \left(\frac{x}{a}\right)^{2/3}.
\]
\end{lemma}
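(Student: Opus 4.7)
The plan is to mirror the argument used for $ \calC(1,\rho^2, (\rho')^{-2}) $ in the analogous case \Cref{lem:S25}, with the roles of $ t_1 $ and $ t_2 $ essentially swapped (since now $ m_1 = 1 $ and $ o_1 \geq 2 $, rather than $ m_1 \geq 2 $ and $ o_1 = 1 $). The set $ S_{3,3}(a) $ forces $ w_0 = w_1 = 0 $ and $ w_2 \geq 1 $, so the decomposition via $ \psi_a $ gives simply $ w = w_2 $ with $ 1 \leq w_2 \leq a+1 $, which keeps $ t_1 $ and $ t_3 $ small (comparable to $ w_2/a $ and $ w_2/a^2 $).

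First, I take $ \alpha \in R(a, S_{3,3}(a)) $ with $ \alpha = t_1+t_2\rho^2+t_3(\rho')^{-2} $ and $ \tau(\alpha) = (m,-w,o) $ where $ m = m_0(a,w)+1 $, $ o = o_0(a,w)+o_1 $, and I set $ o_2 := o_1-1 \geq 1 $. Since $ w_1 = 0 $ and $ w_2 \geq 1 $, \Cref{lem:S3t1t2} yields
\[
	t_1 = \frac{(a+1)w_2}{a^2+3a+3} \geq \frac{w_2}{2a},
\]
where the inequality uses $ 2a(a+1) \geq a^2+3a+3 $ for $ a \geq 8 $. By \Cref{lem:t1t2}, $ t_2 \geq o_2 $, and directly from $ w = w_2 $,
\[
	t_3 = \frac{w_2}{a^2+3a+3} \geq \frac{w_2}{2a^2}.
\]

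Next, I invoke \Cref{lem:Estimates}. Using $ a^2 t_1 t_2^2 < x $ gives
\[
	\frac{a\, w_2\, o_2^2}{2} \leq a^2 \cdot \frac{w_2}{2a}\cdot o_2^2 \leq a^2 t_1 t_2^2 < x,
\]
so $ a w_2 o_2^2 < 2x $. Similarly, $ a^4 t_1 t_2 t_3 < x $ yields $ a o_2 w_2^2 < 4x $. Therefore
\[
	\#\left\{\alpha \in R(a, S_{3,3}(a)) : \Nm(\alpha) \leq x\right\} \leq \#M(a,x),
\]
where
\[
	M(a,x) := \left\{(o_2,w_2) \in \Z_{\geq 1}^2 : a w_2 o_2^2 < 2x,\ a o_2 w_2^2 < 4x\right\}.
\]

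Finally, I split $ M(a,x) $ according to whether $ o_2 \leq w_2 $ or $ o_2 > w_2 $, exactly as in the proof of \Cref{lem:S25}. In the first case, $ a o_2^3 < 2x $ forces $ o_2 < (2x/a)^{1/3} $, and then $ w_2 \leq (4x/(ao_2))^{1/2} $, so
\[
	\sum_{1 \leq o_2 < (2x/a)^{1/3}} \left(\frac{4x}{a o_2}\right)^{1/2} \ll \left(\frac{x}{a}\right)^{1/2} \left(\frac{x}{a}\right)^{1/6} = \left(\frac{x}{a}\right)^{2/3}.
\]
The case $ o_2 > w_2 $ is symmetric and yields the same bound. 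I do not expect any real obstacle here since the inequalities $ a w_2 o_2^2 < 2x $ and $ a o_2 w_2^2 < 4x $ are structurally identical to those in \Cref{lem:S25}; the only care needed is in verifying the constants in the lower bounds for $ t_1 $ and $ t_3 $ (which use $ a \geq 8 $).
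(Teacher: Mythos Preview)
Your proof is correct and follows essentially the same route as the paper: you derive the same lower bounds $t_1 \geq w_2/(2a)$, $t_2 \geq o_2$, $t_3 \geq w_2/(2a^2)$, use the same two inequalities from \Cref{lem:Estimates} to obtain $aw_2o_2^2 < 2x$ and $ao_2w_2^2 < 4x$, and then reduce to the count handled in \Cref{lem:S25}. The paper simply cites \Cref{lem:S25} for the final estimate on $\#M(a,x)$, whereas you spell out the splitting into $o_2 \leq w_2$ and $o_2 > w_2$, but the argument is the same.
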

\begin{proof}
Let $ \alpha \in R(a, S_{3,3}(a)) $, let $ t_1, t_2, t_3 \in \R_{>0} $ be such that $ \alpha = t_1+t_2\rho^2+t_3(\rho')^{-2} $, and let $ \tau(\alpha) = (m, -w, o) $, where $ (m, w, o) \in \Psi(a, S_{3,3}(a)) $. Let $ m = m_0(a, w)+1 $, $ o = o_0(a, w)+o_1 $, and $ o_2 = o_1-1 $. Since $ o_1 \geq 2 $, we have $ o_2 \geq 1 $. Since $ w_2 \geq 1 $, by \Cref{lem:S3t1t2},
\[
	t_1 = \frac{(a+1)w_2-w_1}{a^2+3a+3} = \frac{(a+1)w_2}{a^2+3a+3} \geq \frac{w_2}{2a}.
\]
By \Cref{lem:t1t2}, $ t_2 \geq o_1-1 = o_2 $. We also have
\[
	t_3 = \frac{w}{a^2+3a+3} = w_0+\frac{(a+2)w_1}{a^2+3a+3}+\frac{w_2}{a^2+3a+3} = \frac{w_2}{a^2+3a+3} \geq \frac{w_2}{2a^2}.
\]

If $ \Nm(\alpha) \leq x $, then by \Cref{lem:Estimates}, $ a^2t_1t_2^2 < x $ and $ a^4t_1t_2t_3 < x $, hence $ aw_2o_2^2 < 2x $ and $ ao_2w_2^2 < 4x $. Let
\[
	M(a, x) := \left\{(o_2, w_2) \in \Z_{\geq 1}^2 : aw_2o_2^2 < 2x, ao_2w_2^2 < 4x\right\}.
\]
The number of pairs in $ M(a, x) $ can be estimated as in \Cref{lem:S25}, and we get
\[
	\#M(a, x) \ll \left(\frac{x}{a}\right)^{2/3}.
\]
Thus,
\[
	\#\left\{\alpha \in R(a, S_{3,3}(a)) : \Nm(\alpha)\leq x\right\} \ll \left(\frac{x}{a}\right)^{2/3}.\qedhere
\]
\end{proof}

\begin{lemma}
\label{lemmaS34}
Let $ a \in \Z $, $ a \geq 8 $ and $ r > 0 $. If $ 1 \leq x \leq a^r $, then
\[
	\#\left\{\alpha \in Q(a, S_{3,4}(a)) : \Nm(\alpha)\leq x\right\} \ll_r \frac{x}{a^3}(\log a).
\]
The constant implied in the estimate depends only on $ r $.
\end{lemma}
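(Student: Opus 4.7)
The plan is to follow the template of \Cref{lem:S31,lem:S32,lem:S33}. I would pick $ \alpha \in R(a, S_{3,4}(a)) $, decompose $ \alpha = t_1+t_2\rho^2+t_3(\rho')^{-2} $, and write $ \tau(\alpha) = (m,-w,o) $ with $ (m,w,o) \in \Psi(a, S_{3,4}(a)) $, so that $ m = m_0(a,w)+1 $ and $ o = o_0(a,w)+o_1 $ with $ o_1 \geq 2 $. Setting $ o_2 := o_1-1 $, we have $ o_2 \geq 1 $.

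The key feature of this sub-case is that $ w_2 = 0 $, so \Cref{lem:S3t1t2} gives the clean formula
\[
	t_1 = 1-\frac{w_1}{a^2+3a+3} \geq 1-\frac{a+1}{a^2+3a+3} > \frac{1}{2},
\]
valid uniformly for $ a \geq 8 $. In particular, $ t_1 $ is bounded below by an absolute constant, so no case split on the size of $ w_1 $ is required, unlike in the $ w_2 \geq 1 $ cases handled in \Cref{lem:S31,lem:S32,lem:S33}. Meanwhile, \Cref{lem:t1t2} provides $ t_2 \geq o_2 $, and the assumption $ w_0 \geq 1 $ together with $ w_2 = 0 $ gives
\[
	t_3 = \frac{w}{a^2+3a+3} = w_0+\frac{(a+2)w_1}{a^2+3a+3} \geq w_0.
\]

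From \Cref{lem:Estimates}, the bound $ t_2^3 < x $ yields $ o_2 < x^{1/3} $, and $ a^4t_1t_2t_3 < x $ yields $ a^4 o_2 w_0 < 2x $. Since $ w_2 = 0 $, the parameter $ w $ is determined by the pair $ (w_0, w_1) $ with $ w_1 $ ranging over $ \{0, 1, \dots, a+1\} $, so I would bound the count by
\[
	\sum_{1 \leq o_2 < x^{1/3}}\sum_{0 \leq w_1 \leq a+1}\frac{2x}{a^4 o_2} \ll \frac{x}{a^3}\sum_{1 \leq o_2 < x^{1/3}}\frac{1}{o_2} \ll \frac{x}{a^3}\log(2x) \ll_r \frac{x}{a^3}(\log a),
\]
which is the desired estimate. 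There is no real obstacle: this sub-case is in fact the simplest among \Cref{lem:S31,lem:S32,lem:S33}, because the absolute lower bound on $ t_1 $ eliminates the $ w_1 \lessgtr a/2 $ or $ w_2 \lessgtr a/2 $ dichotomy that was needed in the earlier lemmas, and the weaker denominator $ 1/a^3 $ (instead of $ 1/a^2 $) results from the fact that here one gains a factor of $ 1/a $ from $ t_3 \geq w_0 \geq 1 $ but only a factor of $ a $ (rather than $ \log a $) from the free sum over $ w_1 $.
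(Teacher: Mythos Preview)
Your proposal is correct and matches the paper's own proof essentially line for line: both use \Cref{lem:S3t1t2} with $w_2=0$ to get $t_1\geq\tfrac12$, \Cref{lem:t1t2} for $t_2\geq o_2$, the trivial bound $t_3\geq w_0$, and then \Cref{lem:Estimates} to obtain $o_2^3<x$ and $a^4o_2w_0<2x$, finishing with the same double sum over $o_2$ and $w_1$. Your observation that no case split on $w_1$ is needed here, and your explanation for the $a^{-3}$ rather than $a^{-2}$, are accurate and add useful perspective not spelled out in the paper.
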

\begin{proof}
Let $ \alpha \in R(a, S_{3,4}(a)) $, let $ t_1, t_2, t_3 \in \R_{>0} $ be such that $ \alpha = t_1+t_2\rho^2+t_3(\rho')^{-2} $, and let $ \tau(\alpha) = (m, -w, o) $, where $ (m, w, o) \in \Psi(a, S_{3,4}(a)) $. Let $ m = m_0(a, w)+1 $, $ o = o_0(a, w)+o_1 $, and $ o_2 = o_1-1 $. Since $ o_1 \geq 2 $, we have $ o_2 \geq 1 $. Since $ w_2 = 0 $, by \Cref{lem:S3t1t2},
\[
	t_1 = 1-\frac{w_1}{a^2+3a+3} \geq 1-\frac{a+1}{a^2+3a+3} \geq \frac{1}{2}.
\]
By \Cref{lem:t1t2}, $ t_2 \geq o_1-1 = o_2 $. We also have
\[
	t_3 = \frac{w}{a^2+3a+3} = w_0+\frac{(a+2)w_1}{a^2+3a+3}+\frac{w_2}{a^2+3a+3} \geq w_0.
\]

If $ \Nm(\alpha) \leq x $, then by \Cref{lem:Estimates}, $ t_2^3 < x $ and $ a^4t_1t_2t_3 < x $, hence $ o_2^3 < x $ and $ a^4o_2w_0 < 2x $. Let
\[
	M(a, x) := \left\{(o_2, w) \in \Z_{\geq 1}^2 : o_2^3 < x, w_0 \geq 1, w_1 \geq 0, w_2 = 0, a^4o_2w_0 < 2x\right\}.
\]

The number of pairs in $ M(a, x) $ can be estimated as
\[
	\#M_2(a, x) \leq \sum_{1 \leq o_2 < x^{1/3}}\sum_{1 \leq w_1 \leq a+1}\frac{2x}{a^4o_2} \ll \frac{x}{a^3}\sum_{1 \leq o_2 < x^{1/3}} \frac{1}{o_2} \ll \frac{x}{a^3}(\log 2x) \ll_r \frac{x}{a^3}(\log a).
\]

Thus,
\[
	\#\left\{\alpha \in R(a, S_{3,4}(a)) : \Nm(\alpha) \leq x\right\} \leq \#M(a, x) \ll_r \frac{x}{a^3}(\log a).\qedhere
\]
\end{proof}

\begin{lemma}
\label{lem:S35}
Let $ a \in \Z $, $ a \geq 8 $ and $ r > 0 $. If $ 1 \leq x \leq a^r $, then
\[
	\#\left\{\alpha \in R(a, S_{3,5}(a)) : \Nm(\alpha) \leq x\right\} \ll_r \frac{x}{a^3}(\log a).
\]
The constant implied in the estimate depends only on $ r $.
\end{lemma}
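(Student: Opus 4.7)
The plan is to follow the same template as the preceding lemmas in this subsection, in particular \Cref{lemmaS34}, adapted to the regime where the size of $ t_3 $ is controlled by $ w_1 $ rather than by $ w_0 $ or $ w_2 $. I would fix $ \alpha \in R(a, S_{3,5}(a)) $, write $ \alpha = t_1+t_2\rho^2+t_3(\rho')^{-2} $ with $ \tau(\alpha) = (m, -w, o) $ and $ \psi_a(w) = (w_0, w_1, w_2) $, and record the constraints $ w_0 = 0 $, $ w_1 \geq 1 $, $ w_2 = 0 $, $ m_1 = 1 $, together with $ o_2 := o_1-1 \geq 1 $.

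Next I would read off lower bounds for the $ t_i $ from the structural lemmas of \Cref{sec:Count}. By \Cref{lem:S3t1t2} with $ w_2 = 0 $, we get $ t_1 = 1-w_1/(a^2+3a+3) \geq 1/2 $, since $ w_1 \leq a+1 $. By \Cref{lem:t1t2}, $ t_2 \geq o_2 $. The identity $ t_3 = w/(a^2+3a+3) $ with $ w_0 = w_2 = 0 $ reduces to $ t_3 = (a+2)w_1/(a^2+3a+3) \geq w_1/(2a) $.

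Then I would apply \Cref{lem:Estimates}: from $ t_2^3 < x $ I obtain $ o_2^3 < x $, while $ a^4t_1t_2t_3 < x $ combined with the lower bounds above yields $ a^3o_2w_1 < 4x $. The count is therefore bounded by the cardinality of $ M(a,x) := \{(o_2, w_1) \in \Z_{\geq 1}^2 : o_2^3 < x,\ 1 \leq w_1 \leq a+1,\ a^3o_2w_1 < 4x\} $. Summing the constraint $ w_1 < 4x/(a^3o_2) $ over $ 1 \leq o_2 < x^{1/3} $ gives $ \#M(a,x) \ll (x/a^3)\log(2x) \ll_r (x/a^3)\log a $, which is exactly the claimed bound.

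I do not anticipate any genuine obstacle: the argument parallels \Cref{lemmaS34} essentially verbatim, the only structural twist being that the improved lower bound $ t_3 \geq w_1/(2a) $ (in place of $ t_3 \geq w_0 $) is what both saves one factor of $ a $ in the count (giving $ a^{-3} $ instead of $ a^{-4} $) and absorbs the $ w_1 $-sum into the $ 4x/(a^3o_2) $ constraint. The mild sanity check is that $ t_1 \geq 1/2 $ holds uniformly in the relevant range of $ w_1 $, which follows immediately from $ (a+1)/(a^2+3a+3) \leq 1/2 $.
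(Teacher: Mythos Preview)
Your proposal is correct and matches the paper's proof essentially line for line: the same lower bounds $t_1\geq 1/2$, $t_2\geq o_2$, $t_3\geq w_1/(2a)$ via \Cref{lem:S3t1t2} and \Cref{lem:t1t2}, the same consequences $o_2^3<x$ and $a^3o_2w_1<4x$ from \Cref{lem:Estimates}, and the same final sum $\sum_{o_2<x^{1/3}}4x/(a^3o_2)\ll_r (x/a^3)\log a$. The only cosmetic difference is that the paper indexes $M(a,x)$ by $(o_2,w)$ rather than $(o_2,w_1)$, which is equivalent since $w_0=w_2=0$ forces $w=(a+2)w_1$.
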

\begin{proof}
Let $ \alpha \in R(a, S_{3,5}(a)) $, let $ t_1, t_2, t_3 \in \R_{>0} $ be such that $ \alpha = t_1+t_2\rho^2+t_3(\rho')^{-2} $, and let $ \tau(\alpha) = (m, -w, o) $, where $ (m, w, o) \in \Psi(a, S_{3,5}(a)) $. Let $ m = m_0(a, w)+1 $, $ o = o_0(a, w)+o_1 $, and $ o_2 = o_1-1 $. Since $ o_1 \geq 2 $, we have $ o_2 \geq 1 $. Since $ w_2 = 0 $, by \Cref{lem:S3t1t2},
\[
	t_1 = 1-\frac{w_1}{a^2+3a+3} \geq 1-\frac{a+1}{a^2+3a+3} \geq \frac{1}{2}.
\]
By \Cref{lem:t1t2}, $ t_2 \geq o_1-1 = o_2 $. We also have
\[
	t_3 = \frac{w}{a^2+3a+3} = w_0+\frac{(a+2)w_1}{a^2+3a+3}+\frac{w_2}{a^2+3a+3} = \frac{(a+2)w_1}{a^2+3a+3} \geq \frac{w_1}{2a}.
\]

If $ \Nm(\alpha) \leq x $, then by \Cref{lem:Estimates}, $ t_2^3 < x $ and $ a^4t_1t_2t_3 < x $, hence $ o_2^3 < x $ and $ a^3o_2w_1 < 4x $. Let
\[
	M(a, x) := \left\{(o_2, w) \in \Z_{\geq 1}^2 : o_2^3 \leq x, w_0 = 0, w_1 \geq 1, w_2 = 0, a^3o_2w_1 < 4x\right\}.
\]

The number of pairs in $ M(a, x) $ can be estimated as
\[
	\#M(a, x) \leq \sum_{1 \leq o_2 < x^{1/3}}\frac{4x}{a^3o_2} \ll \frac{x}{a^3}(\log 2x) \ll_r \frac{x}{a^3}(\log a).
\]
Thus,
\[
	\#\left\{\alpha \in R(a, S_{3,5}(a)) : \Nm(\alpha) \leq x\right\} \leq \#M(a, x) \ll_r \frac{x}{a^3}(\log a).\qedhere
\]
\end{proof}

\begin{prop}
\label{prop:S3}
Let $ a \in \Z $, $ a \geq 8 $ and $ r > 0 $. If $ 1 \leq x \leq a^r $, then
\[
	\#\left\{\alpha \in R(a, S_3(a)) : \Nm(\alpha) \leq x\right\} \ll_r \frac{x}{a^2}(\log a)^2+\left(\frac{x}{a}\right)^{2/3}.
\]
The constant implied in the estimate depends only on $ r $.
\end{prop}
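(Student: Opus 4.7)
The plan is to mirror the proof of Proposition \ref{prop:S2} exactly: decompose $R(a, S_3(a))$ according to the disjoint partition $S_3(a) = S_{3,1}(a)\sqcup S_{3,2}(a)\sqcup S_{3,3}(a)\sqcup S_{3,4}(a)\sqcup S_{3,5}(a)$, which was set up at the start of this subsection, and invoke the five preceding lemmas.

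First I would observe that the definition of $R(a, S)$ is linear in $S$ in the sense that a disjoint decomposition of $S$ induces a disjoint decomposition of $R(a, S)$: indeed, $\alpha \in R(a, S)$ is determined by $(m,-n,o) = \tau(\alpha)$ together with the unique tuple $(m_1, o_1, \psi_a(w)) \in \Z^2 \times W_2(a)$ obtained from the decompositions $m = m_0(a,w) + m_1$, $o = o_0(a,w) + o_1$, $w = -n$. Hence
\[
	R(a, S_3(a)) = R(a, S_{3,1}(a)) \sqcup R(a, S_{3,2}(a)) \sqcup R(a, S_{3,3}(a)) \sqcup R(a, S_{3,4}(a)) \sqcup R(a, S_{3,5}(a)).
\]

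Then I would simply add the bounds from Lemmas \ref{lem:S31}--\ref{lem:S35}. The pieces $R(a, S_{3,1}(a))$ and $R(a, S_{3,2}(a))$ each contribute $\ll_r \frac{x}{a^2}(\log a)^2$; the piece $R(a, S_{3,3}(a))$ contributes $\ll (x/a)^{2/3}$; and the two remaining pieces $R(a, S_{3,4}(a))$ and $R(a, S_{3,5}(a))$ each contribute $\ll_r \frac{x}{a^3}(\log a)$, which is absorbed into the $\frac{x}{a^2}(\log a)^2$ term for $a \geq 8$. Summing yields the claimed bound.

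There is no real obstacle here: all the work has been done in the individual lemmas, and the proposition is a one-line bookkeeping statement. The only minor point to verify is the disjointness of the decomposition of $R(a, S_3(a))$, which follows from the fact that $\psi_a$ is a bijection onto $W_2(a)$ and the five subsets of $S_3(a)$ are defined by mutually exclusive conditions on $(w_0, w_1, w_2)$ together with $o_1 \geq 2$, $m_1 = 1$ fixed.
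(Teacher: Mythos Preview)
Your proposal is correct and matches the paper's own proof essentially verbatim: decompose $R(a,S_3(a))$ along the disjoint partition $S_3(a)=S_{3,1}(a)\sqcup\cdots\sqcup S_{3,5}(a)$ and sum the bounds from Lemmas \ref{lem:S31}--\ref{lem:S35}. The paper states this in a single sentence; your added remarks on why the decomposition of $R(a,\cdot)$ is disjoint and how the $\frac{x}{a^3}(\log a)$ terms are absorbed are accurate and harmless elaborations.
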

\begin{proof}
We have $ S_3(a) = S_{3,1}(a)\sqcup S_{3,2}(a)\sqcup \dots \sqcup S_{3,5}(a) $, hence $ R(a, S_3(a)) = R(a, S_{3,1}(a))\sqcup R(a, S_{3,2}(a))\sqcup \dots\sqcup R(a, S_{3,5}(a)) $. The proposition now follows from \Crefrange{lem:S31}{lem:S35}.
\end{proof}

\subsection{Estimate for \texorpdfstring{$ Q(a, S_4(a)) $}{Q(a,S4(a))}}
\label{sec:S4}

We recall that
\[
	S_4(a) = \left\{(m_1, o_1, w_0, w_1, w_2) \in \Z^2 \times W_1(a) : m_1 = 1, o_1 = 1\right\}.
\]

We write $ S_4(a) = S_{4,1}(a)\sqcup S_{4,2}(a)\sqcup \dots \sqcup S_{4,12}(a) $, where
\begin{align*}
	S_{4,1}(a)& := \left\{(m_1, o_1, w_0, w_1, w_2) \in S_4(a) : w_0 \geq 1, w_1 = 0, w_2 \geq 1\right\},\\
	S_{4,2}(a)& := \left\{(m_1, o_1, w_0, w_1, w_2) \in S_4(a) : w_0 = 0, w_1 = 0, w_2 \geq 1\right\},\\
	S_{4,3}(a)& := \left\{(m_1, o_1, w_0, w_1, w_2) \in S_4(a) : w_0 \geq 1, 1 \leq w_1 \leq a+1, w_2 = w_1+1\right\},\\
	S_{4,4}(a)& := \left\{(m_1, o_1, w_0, w_1, w_2) \in S_4(a) : w_0 = 0, 1 \leq w_1 \leq a+1, w_2 = w_1+1\right\},\\
	S_{4,5}(a)& := \left\{(m_1, o_1, w_0, w_1, w_2) \in S_4(a) : w_0 \geq 1, 1 \leq w_1 \leq a+1, w_2 \geq w_1+2\right\},\\
	S_{4,6}(a)& := \left\{(m_1, o_1, w_0, w_1, w_2) \in S_4(a) : w_0 = 0, 1 \leq w_1 \leq a+1, w_2 \geq w_1+2\right\},\\
	S_{4,7}(a)& := \left\{(m_1, o_1, w_0, w_1, w_2) \in S_4(a) : w_0 \geq 1, w_1 \geq w_2, w_2 \geq 1\right\},\\
	S_{4,8}(a)& := \left\{(m_1, o_1, w_0, w_1, w_2) \in S_4(a) : w_0 = 0, w_1 \geq w_2, w_2 \geq 1\right\},\\
	S_{4,9}(a)& := \left\{(m_1, o_1, w_0, w_1, w_2) \in S_4(a) : w_0 \geq 0, w_1 = a+2, w_2 = 0\right\},\\
	S_{4,10}(a)& := \left\{(m_1, o_1, w_0, w_1, w_2) \in S_4(a) : w_0 \geq 1, 1 \leq w_1 \leq a+1, w_2 = 0\right\},\\
	S_{4,11}(a)& := \left\{(m_1, o_1, w_0, w_1, w_2) \in S_4(a) : w_0 = 0, 1 \leq w_1 \leq a+1, w_2 = 0\right\},\\
	S_{4,12}(a)& := \left\{(m_1, o_1, w_0, w_1, w_2) \in S_4(a) : w_0 \geq 1, w_1 = 0, w_2 = 0\right\}.
\end{align*}

Now we again parametrize $ w $ as $ w = (a^2+3a+3)w_0+(a+1)w_1+w_2 $, where $ w_1 \leq a+2 $, $ w_2 \leq a $, and $ w_2 = 0 $ if $ w_1 = a+2 $, so that $ \phi_a(w) = (w_0,w_1,w_2) $. As before, we write $ m = m_0(a,w)+m_1 $ and $ o = o_0(a,w)+o_1 $, where
\begin{align*}
	m_0(a,w)& = \left\lfloor-\frac{a+1}{a^2+3a+3}w\right\rfloor,\\
	o_0(a,w)& = \left\lfloor\frac{a+2}{a^2+3a+3}w\right\rfloor.
\end{align*}

This time, we assume $ m_1 = 1 $ and $ o_1 = 1 $, hence
\begin{align*}
	t_1& = m+\frac{a+1}{a^2+3a+3}w = \left\lfloor-\frac{a+1}{a^2+3a+3}w\right\rfloor+1+\frac{a+1}{a^2+3a+3}w.\\
	t_2& = o-\frac{a+2}{a^2+3a+3}w = \left\lfloor\frac{a+2}{a^2+3a+3}w\right\rfloor+1-\frac{a+2}{a^2+3a+3}w.
\end{align*}

\begin{lemma}
\label{lem:S4t1t2}
Let $ a \in \Z_{\geq 1} $, $ w \in \Z_{\geq 1} $, $ (w_0,w_1,w_2) = \phi_a(w) $, $ m = m_0(a,w)+1 $, $ o = o_0(a,w)+1 $. If $ \alpha = t_1+t_2\rho^2+t_3(\rho')^{-2} $ is such that $ \tau(\alpha) = (m,-w,o) $, then
\begin{align*}
	t_1& = \frac{(a+1)w_2-(a+2)w_1}{a^2+3a+3}+\begin{cases}
		2,&\text{if }w_1 = a+2,\\
		1,&\text{if }w_1 \leq a+1\text{ and }w_1 \geq w_2,\\
		0,&\text{if }w_1 \leq a+1\text{ and }w_1 < w_2,
	\end{cases}\\
	t_2& = \begin{cases}
		1,&\text{if }w_1 = w_2 = 0,\\
		1+\frac{w_1-(a+2)w_2}{a^2+3a+3},&\text{if }w_2 \geq 1,\\
		\frac{w_1}{a^2+3a+3},&\text{if }w_1 \geq 1, w_2 = 0.
	\end{cases}
\end{align*}
\end{lemma}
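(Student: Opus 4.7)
The plan is to obtain each formula by substituting $m_1 = o_1 = 1$ into the expressions for $t_1$ and $t_2$ derived in \Cref{sec:Count}, and then applying the floor-function identities already established in \Cref{lem:W1,lem:W2}.

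For $t_2$, the statement is essentially a restatement of \Cref{lem:Estimatet2}: under the hypotheses of the lemma, $o = o_0(a,w)+1$, which is exactly the case considered there. One just rewrites $1-\frac{(a+2)w_2-w_1}{a^2+3a+3}$ as $1+\frac{w_1-(a+2)w_2}{a^2+3a+3}$ in the case $w_2 \geq 1$, and the other two cases are identical. So the $t_2$ part follows immediately by citation.

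For $t_1$, I start from
\[
    t_1 = m+\frac{a+1}{a^2+3a+3}w = m_0(a,w)+1+\frac{a+1}{a^2+3a+3}w = 1-\left(-\frac{a+1}{a^2+3a+3}w-\left\lfloor-\frac{a+1}{a^2+3a+3}w\right\rfloor\right),
\]
since $m_1 = 1$ and $m_0(a,w) = \lfloor-\frac{a+1}{a^2+3a+3}w\rfloor$. The fractional part in the parentheses is precisely the quantity computed in the second half of \Cref{lem:W1}, namely
\[
    -\frac{a+1}{a^2+3a+3}w-\left\lfloor-\frac{a+1}{a^2+3a+3}w\right\rfloor = \frac{(a+2)w_1-(a+1)w_2}{a^2+3a+3}+\begin{cases} -1, & w_1 = a+2,\\ 0, & w_1 \leq a+1,\ w_1 \geq w_2,\\ 1, & w_1 \leq a+1,\ w_1 < w_2. \end{cases}
\]
Substituting this back and collecting the constant $1$ with the case-wise constants ($1-(-1)=2$, $1-0=1$, $1-1=0$) together with the sign flip on the fraction gives exactly the claimed expression for $t_1$.

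There is no real obstacle here; the lemma is an algebraic consolidation of the formulas of \Cref{sec:Count} specialized to $(m_1,o_1)=(1,1)$, recorded so that \Cref{sec:S4} can invoke a single statement instead of unpacking the floor computations each time. The only thing to be careful about is keeping the three sign/case conventions of \Cref{lem:W1} consistent when subtracting, and verifying in the $w_2 = 0$, $w_1 \geq 1$ branch that $t_2 = w_1/(a^2+3a+3)$ as stated (this is the third case of \Cref{lem:Estimatet2}, where one uses that the fractional part of $\frac{a+2}{a^2+3a+3}w$ equals $1-\frac{w_1}{a^2+3a+3}$).
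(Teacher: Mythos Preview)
Your proof is correct and follows the same approach as the paper: both compute $t_1$ by writing it as $1$ minus the fractional part appearing in \Cref{lem:W1} and then reading off the three cases, and both obtain $t_2$ from \Cref{lem:W2} (the paper rederives the $t_2$ formula directly from \Cref{lem:W2} rather than citing \Cref{lem:Estimatet2}, but that lemma is itself an immediate consequence of \Cref{lem:W2}, so the arguments are identical in substance).
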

\begin{proof}
We have
\[
	t_1 = 1-\left(-\frac{a+1}{a^2+3a+3}w-\left\lfloor-\frac{a+1}{a^2+3a+3}w\right\rfloor\right).
\]
By \Cref{lem:W1},
\[
	\frac{a+1}{a^2+3a+3}w-\left\lfloor-\frac{a+1}{a^2+3a+3}w\right\rfloor = \frac{(a+2)w_1-(a+1)w_2}{a^2+3a+3}+\begin{cases}
		-1,&\text{if }w_1 = a+2,\\
		0,&\text{if }w_1 \leq a+1\text{ and }w_1 \geq w_2,\\
		1,&\text{if }w_1 \leq a+1\text{ and }w_1 < w_2.
	\end{cases}
\]
The formula for $ t_1 $ follows. We also have
\[
	t_2 = 1-\left(\frac{a+2}{a^2+3a+3}w-\left\lfloor\frac{a+2}{a^2+3a+3}w\right\rfloor\right)
\]
and by \Cref{lem:W2},
\[
	\frac{a+2}{a^2+3a+3}w-\left\lfloor\frac{a+2}{a^2+3a+3}w\right\rfloor = \begin{cases}
		0,&\text{if }w_1 = w_2 = 0,\\
		\frac{(a+2)w_2-w_1}{a^2+3a+3},&\text{if }w_2 \geq 1,\\
		1-\frac{w_1}{a^2+3a+3},&\text{if }w_1 \geq 1, w_2 = 0.
	\end{cases}
\]
The formula for $ t_2 $ follows.
\end{proof}

\begin{lemma}
\label{lem:S41}
Let $ a \in \Z $, $ a \geq 8 $. If $ x \geq 1 $, then
\[
	\#\left\{\alpha \in Q(a, S_{4,1}(a)) : \Nm(\alpha)\leq x\right\} \ll \frac{x}{a^3}(\log a).
\]
\end{lemma}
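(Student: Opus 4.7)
The plan is to follow the template used throughout \Cref{sec:S2,sec:S3}, specializing to the constraints defining $ S_{4,1}(a) $: namely $ m_1 = o_1 = 1 $, $ w_0 \geq 1 $, $ w_1 = 0 $, and $ w_2 \geq 1 $. Fix $ \alpha \in Q(a, S_{4,1}(a)) $ with $ \alpha = t_1+t_2\rho^2+t_3(\rho')^{-2} $ and $ \tau(\alpha) = (m,-w,o) $, where $ (m, w, o) \in \Phi(a, S_{4,1}(a)) $. Since $ w_1 = 0 \leq a+1 $ and $ w_2 \geq 1 > w_1 $, \Cref{lem:S4t1t2} specializes cleanly to
\[
	t_1 = \frac{(a+1)w_2}{a^2+3a+3} \geq \frac{w_2}{2a}, \qquad t_2 = 1-\frac{(a+2)w_2}{a^2+3a+3},
\]
while directly from the parametrization we read off $ t_3 = w_0+\frac{w_2}{a^2+3a+3} \geq w_0 $.

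The next step is to bound $ t_2 $ from below. Since $ t_2 $ is close to $ 1 $ for small $ w_2 $ but small when $ w_2 $ is near $ a $, I would split the argument at $ w_2 = a/2 $, exactly as in \Cref{lem:S21,lem:S22,lem:S23}. For $ w_2 \leq a/2 $, the familiar calculation yields $ t_2 > 1/2 $. For $ w_2 > a/2 $, writing $ w_2 = (a+1)-z_2 $ with $ 1 \leq z_2 < a/2+1 $ gives $ t_2 \geq \frac{z_2}{2a} $, while at the same time $ t_1 \geq \frac{w_2}{2a} > \frac{1}{4} $.

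In each case, the inequality $ a^4 t_1 t_2 t_3 < x $ from \Cref{lem:Estimates} reduces, after absorbing absolute constants, to $ a^3 w_0 w_2 \ll x $ in the first case and $ a^3 w_0 z_2 \ll x $ in the second. Crucially, because $ m_1 $ and $ o_1 $ are both fixed to $ 1 $, the data $ (m,o) $ are determined by $ w $, so the count reduces to counting pairs $ (w_0, w_2) $ (respectively $ (w_0, z_2) $) obeying the appropriate bound. Summing
\[
	\sum_{1 \leq w_2 \leq a}\frac{x}{a^3 w_2} \ll \frac{x}{a^3}(\log a)
\]
in each case produces the claimed estimate.

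I do not expect any real obstacle: the argument parallels \Cref{lem:S23} almost verbatim, with the roles of $ m_2 $ and $ w_0 $ interchanged. The absence of a $ \log 2x $ factor (and hence the appearance of a single $ \log a $ instead of $ (\log a)^2 $) is precisely due to $ m_1, o_1 $ being fixed rather than free summation variables, which is the defining feature of the $ S_4 $ decomposition and is why no hypothesis of the form $ x \leq a^r $ is needed.
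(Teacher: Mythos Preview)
Your proposal is correct and follows essentially the same approach as the paper's proof: the same application of \Cref{lem:S4t1t2}, the same split at $w_2 = a/2$ with the substitution $w_2 = (a+1)-z_2$ in the large range, and the same reduction via $a^4t_1t_2t_3 < x$ to counting pairs $(w_0,w_2)$ (resp.\ $(w_0,z_2)$) with $a^3w_0w_2 \ll x$. Your observation that fixing $m_1=o_1=1$ removes the $\log 2x$ factor (and hence the need for the hypothesis $x \leq a^r$) is exactly the point.
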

\begin{proof}
Let $ \alpha \in Q(a, S_{4,1}(a)) $, let $ t_1, t_2, t_3 \in \R_{>0} $ be such that $ \alpha = t_1+t_2\rho^2+t_3(\rho')^{-2} $, and let $ \tau(\alpha) = (m, -w, o) $, where $ (m, w, o) \in \Phi(a, S_{4,1}(a)) $. Since $ w_1 = 0 $ and $ w_2 \geq 1 $, by \Cref{lem:S4t1t2},
\begin{align*}
	t_1& = \frac{(a+1)w_2-(a+2)w_1}{a^2+3a+3} = \frac{(a+1)w_2}{a^2+3a+3} \geq \frac{w_2}{2a},\\
	t_2& = 1+\frac{w_1-(a+2)w_2}{a^2+3a+3} = 1-\frac{(a+2)w_2}{a^2+3a+3}.
\end{align*}
We also have
\[
	t_3 = \frac{w}{a^2+3a+3} = w_0+\frac{(a+1)w_1}{a^2+3a+3}+\frac{w_2}{a^2+3a+3} \geq w_0.
\]

We consider two cases. First, assume $ w_2 \leq \frac{a}{2} $. Then
\[
	t_2 = 1-\frac{(a+2)w_2}{a^2+3a+3} \geq 1-\frac{(a+2)a}{2(a^2+3a+3)} > \frac{1}{2}.
\]

If $ \Nm(\alpha) \leq x $, then by \Cref{lem:Estimates}, $ a^4t_1t_2t_3 < x $, hence $ a^3w_0w_2 < 4x $. Let
\[
	M_1(a, x) := \left\{(w_0, w_1, w_2) \in \Z_{\geq 0}^3 : w_0 \geq 1, w_1 = 0, 1 \leq w_2 \leq \frac{a}{2}, a^3w_0w_2 < 4x\right\}.
\]
The number of triples in $ M_1(a, x) $ can be estimated as
\[
	\#M_1(a, x) \leq \sum_{1 \leq w_2 \leq \frac{a}{2}}\frac{4x}{a^3w_2} \ll \frac{x}{a^3}\sum_{1 \leq w_2 \leq \frac{a}{2}}\frac{1}{w_2} \ll \frac{x}{a^3}(\log a).
\]

Secondly, assume $ w_2 > \frac{a}{2} $. Let $ w_2 = (a+1)-z_2 $, where $ 1 \leq z_2 < \frac{a}{2}+1 $. We have
\begin{align*}
	t_1& \geq \frac{w_2}{2a} > \frac{1}{4},\\
	t_2& = 1-\frac{(a+2)((a+1)-z_2)}{a^2+3a+3} = 1-\frac{(a+2)(a+1)}{a^2+3a+3}+\frac{(a+2)z_2}{a^2+3a+3} \geq \frac{z_2}{2a}.
\end{align*}

If $ \Nm(\alpha) \leq x $, then by \Cref{lem:Estimates}, $ a^4t_1t_2t_3 < x $, hence $ a^3w_0z_2 < 8x $. Let
\[
	M_2(a, x) := \left\{(w_0, w_1, w_2) \in \Z_{\geq 0}^3 : w_0 \geq 1, w_1 = 0, w_2 = (a+1)-z_2, 1 \leq z_2 < \frac{a}{2}+1, a^3w_0z_2 \leq 8x\right\}.
\]
The number of triples in $ M_2(a, x) $ can be estimated as
\[
	\#M_2(a, x) \leq \sum_{1 \leq z_2 < \frac{a}{2}+1}\frac{8x}{a^3z_2} \ll \frac{x}{a^3}\sum_{1 \leq z_2 < \frac{a}{2}+1}\frac{1}{z_2} \ll \frac{x}{a^3}(\log a).
\]

Finally,
\[
	\#\left\{\alpha \in Q(a, S_{4,1}(a)) : \Nm(\alpha) \leq x\right\} \leq \#M_1(a, x)+\#M_2(a, x) \ll \frac{x}{a^3}(\log a).\qedhere
\]
\end{proof}

\begin{lemma}
\label{lem:S42}
Let $ a \in \Z $, $ a \geq 8 $. If $ x \geq 1 $, then
\[
	\#\left\{\alpha \in Q(a, S_{4,2}(a)) : \Nm(\alpha)\leq x\right\} \ll \left(\frac{x}{a}\right)^{1/2}+\frac{x}{a^2}.
\]
\end{lemma}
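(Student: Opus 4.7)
The plan is to imitate the two-case analysis used in \Cref{lem:S41} (where $w_0 \geq 1$ contributed a factor of $w_0$ to $t_3$), but here $w_0 = 0$ forces $t_3$ to be genuinely small, which changes the final exponents. Let $\alpha \in Q(a, S_{4,2}(a))$ and pick $t_1, t_2, t_3 \in \R_{>0}$ with $\alpha = t_1 + t_2\rho^2 + t_3(\rho')^{-2}$ and $\tau(\alpha) = (m, -w, o)$. From $m_1 = o_1 = 1$, $w_0 = w_1 = 0$, $w_2 \geq 1$, \Cref{lem:S4t1t2} gives explicit formulas
\[
	t_1 = \frac{(a+1)w_2}{a^2+3a+3},\qquad t_2 = 1 - \frac{(a+2)w_2}{a^2+3a+3},\qquad t_3 = \frac{w_2}{a^2+3a+3},
\]
with $1 \leq w_2 \leq a$. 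Each valid $w_2$ determines a unique triple $(m, w, o)$ and hence at most one $\alpha$, so it suffices to bound the number of admissible $w_2$.

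First I would split according to whether $w_2 \leq a/2$ or $w_2 > a/2$. In the former case one checks as in \Cref{lem:S41} that $t_1 \geq w_2/(2a)$, $t_2 \geq 1/2$, and $t_3 \geq w_2/(2a^2)$. Plugging these bounds into the inequality $a^4 t_1 t_2 t_3 < x$ from \Cref{lem:Estimates} yields $a w_2^2 \ll x$, so $w_2 \ll (x/a)^{1/2}$, contributing $\ll (x/a)^{1/2}$ to the count.

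In the second case I would write $w_2 = (a+1) - z_2$ with $1 \leq z_2 < a/2+1$, exactly as in \Cref{lem:S41}. A direct algebraic computation using the identity $(a+2)(a+1) = a^2+3a+2$ shows
\[
	t_2 = \frac{1 + (a+2)z_2}{a^2+3a+3} \geq \frac{z_2}{2a},
\]
while $t_1 = (a+1)w_2/(a^2+3a+3)$ is bounded below by an absolute positive constant (say $1/3$, which one verifies for $a \geq 8$), and $t_3 = w_2/(a^2+3a+3) \gg 1/a$. Substituting into $a^4 t_1 t_2 t_3 < x$ now gives $a^2 z_2 \ll x$, so $z_2 \ll x/a^2$, contributing $\ll x/a^2$ to the count.

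Adding the two case bounds yields the claimed estimate $\#\{\alpha \in Q(a, S_{4,2}(a)) : \Nm(\alpha) \leq x\} \ll (x/a)^{1/2} + x/a^2$. The only delicate step is the algebra in the second case: one has to verify that after the substitution $w_2 = (a+1)-z_2$ the three factors $t_1, t_2, t_3$ really behave like $1$, $z_2/a$, $1/a$ respectively (in particular that the cancellation in $t_2$ leaves the right main term and no spurious small contribution is lost). Once that is in place the argument is straightforward and no further inequalities from \Cref{lem:Estimates} are needed.
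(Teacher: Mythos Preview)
Your proposal is correct and follows essentially the same route as the paper: the same two-case split on $w_2 \le a/2$ versus $w_2 > a/2$, the same formulas from \Cref{lem:S4t1t2}, and in both cases the same single inequality $a^4 t_1 t_2 t_3 < x$ from \Cref{lem:Estimates}. Your explicit constants differ slightly (the paper uses $t_1 > 1/4$ and $t_3 > 1/(4a)$ in the second case, you use $t_1 \ge 1/3$ and $t_3 \gg 1/a$), but the structure and the resulting bounds $(x/a)^{1/2}$ and $x/a^2$ are identical.
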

\begin{proof}
Let $ \alpha \in Q(a, S_{4,2}(a)) $, let $ t_1, t_2, t_3 \in \R_{>0} $ be such that $ \alpha = t_1+t_2\rho^2+t_3(\rho')^{-2} $, and let $ \tau(\alpha) = (m, -w, o) $, where $ (m, w, o) \in \Phi(a, S_{4,2}(a)) $. Since $ w_1 = 0 $ and $ w_2 \geq 1 $, by \Cref{lem:S4t1t2},
\begin{align*}
	t_1& = \frac{(a+1)w_2-(a+2)w_1}{a^2+3a+3} = \frac{(a+1)w_2}{a^2+3a+3} \geq \frac{w_2}{2a},\\
	t_2& = 1+\frac{w_1-(a+2)w_2}{a^2+3a+3} = 1-\frac{(a+2)w_2}{a^2+3a+3}.
\end{align*}
We also have
\[
	t_3 = \frac{w}{a^2+3a+3} = w_0+\frac{(a+1)w_1}{a^2+3a+3}+\frac{w_2}{a^2+3a+3} = \frac{w_2}{a^2+3a+3} \geq \frac{w_2}{2a^2}.
\]

We consider two cases. First, assume $ w_2 \leq \frac{a}{2} $. Then
\[
	t_2 = 1-\frac{(a+2)w_2}{a^2+3a+3} \geq 1-\frac{(a+2)a}{2(a^2+3a+3)} > \frac{1}{2}.
\]

If $ \Nm(\alpha) \leq x $, then by \Cref{lem:Estimates}, $ a^4t_1t_2t_3 < x $, hence $ aw_2^2 < 8x $. Let
\[
	M_1(a, x) := \left\{(w_0, w_1, w_2) \in \Z_{\geq 0}^3 : w_0 = 0, w_1 = 0, 1 \leq w_2 \leq \frac{a}{2}, aw_2^2 < 8x\right\}.
\]
We have $ \#M_1(a, x) \ll \left(\frac{x}{a}\right)^{1/2} $.

Secondly, assume $ w_2 > \frac{a}{2} $ and let $ w_2 = (a+1)-z_2 $, where $ 1 \leq z_2 < \frac{a}{2}+1 $. We have
\begin{align*}
	t_1& \geq \frac{w_2}{2a} > \frac{1}{4},\\
	t_2& = 1-\frac{(a+2)((a+1)-z_2)}{a^2+3a+3} = 1-\frac{(a+2)(a+1)}{a^2+3a+3}+\frac{(a+2)z_2}{a^2+3a+3} \geq \frac{z_2}{2a},\\
	t_3& \geq \frac{w_2}{2a^2} > \frac{1}{4a}.
\end{align*}

If $ \Nm(\alpha) \leq x $, then by \Cref{lem:Estimates}, $ a^4t_1t_2t_3 < x $, hence $ a^2z_2 < 32 x $. Let
\[
	M_2(a, x) := \left\{(w_0, w_1, w_2) \in \Z_{\geq 0}^3 : w_0 = 0, w_1 = 0, w_2 = (a+1)-z_2, 1 \leq z_2 < \frac{a}{2}+1, a^2z_2 < 32x\right\}.
\]
We have $ \#M_2(a, x) \ll \frac{x}{a^2} $.

Finally,
\[
	\#\left\{\alpha \in Q(a, S_{4,2}(a)) : \Nm(\alpha) \leq x\right\} \leq \#M_1(a, x)+\#M_2(a, x) \ll \left(\frac{x}{a}\right)^{1/2}+\frac{x}{a^2}.\qedhere
\]
\end{proof}

\begin{lemma}
\label{lem:S43}
Let $ a \in \Z $, $ a \geq 8 $. If $ x \geq 1 $, then
\[
	\#\left\{\alpha \in Q(a, S_{4,3}(a)) : \Nm(\alpha) \leq x\right\} \ll \frac{x}{a^2}+\left(\frac{x}{a}\right)^{2/3}.
\]
\end{lemma}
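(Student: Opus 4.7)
The plan is to follow the same template as the earlier $ S_4 $-lemmas in this section, now exploiting the exact identity $ w_2 = w_1+1 $ to pin down $ t_1 $ and $ t_2 $. First I would apply \Cref{lem:S4t1t2}: since $ w_2 \geq 1 $ and $ w_1 < w_2 $, both $ t_1 $ and $ t_2 $ simplify to
\[
	t_1 = \frac{(a+1)-w_1}{a^2+3a+3},\qquad t_2 = \frac{(a+1)((a+1)-w_1)}{a^2+3a+3} = (a+1)\,t_1,
\]
while $ t_3 = w/(a^2+3a+3) \geq w_0 $. Introducing $ z_1 := (a+1)-w_1 $, the constraints $ 1 \leq w_1 \leq a+1 $ together with $ w_2 = w_1+1 \leq a $ force $ 1 \leq z_1 \leq a $, and \Cref{lem:Units} then gives $ t_1 \geq z_1/(2a^2) $ and $ t_2 \geq z_1/(2a) $.

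Next I would feed these lower bounds into \Cref{lem:Estimates}. The two inequalities best suited here are $ a^4 t_1 t_2 t_3 < x $ and $ a^2 t_2 t_3^2 < 2x $, which translate into
\[
	a z_1^2 w_0 \ll x \qquad\text{and}\qquad a z_1 w_0^2 \ll x.
\]
Since each admissible quintuple in $ S_{4,3}(a) $ is determined by the pair $ (z_1, w_0) $, it suffices to count pairs $ (z_1, w_0) \in \Z_{\geq 1}^2 $ with $ z_1 \leq a $ satisfying both of the inequalities above.

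For the final count, the natural split is at the balance point $ z_1 \asymp (x/a)^{1/3} $, where the two constraints coincide. For $ z_1 \leq (x/a)^{1/3} $ the binding constraint is $ w_0 \ll \sqrt{x/(a z_1)} $, and summing $ \sqrt{x/a}\cdot \sum z_1^{-1/2} $ up to this threshold yields $ O((x/a)^{2/3}) $. For $ (x/a)^{1/3} < z_1 \leq a $ the binding constraint is $ w_0 \ll x/(a z_1^2) $, and $ (x/a)\cdot \sum z_1^{-2} $ from this threshold onwards again yields $ O((x/a)^{2/3}) $. When $ x > a^4 $ the second range is empty because $ (x/a)^{1/3} > a $, and the single sum $ \sum_{z_1=1}^{a} \sqrt{x/(a z_1)} $ gives only $ O(x^{1/2}) $; but $ x^{1/2} \ll x/a^2 $ precisely in this regime, which is where the second term on the right-hand side of the lemma enters.

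The main (mild) obstacle is the algebraic bookkeeping: verifying that the two potentially awkward branches of \Cref{lem:S4t1t2} really do collapse to the clean identity $ t_2 = (a+1) t_1 $ once $ w_2 = w_1+1 $ is substituted, and that the extra constraint $ z_1 \leq a $ kicks in exactly at $ x \sim a^4 $ to produce the $ x/a^2 $ term. Past this, the counting is a routine two-dimensional lattice-point estimate of the same shape as in \Cref{lem:S25,lem:S33}.
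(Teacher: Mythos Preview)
Your proposal is correct and close in spirit to the paper's argument, but organized differently enough to be worth a comparison.

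Both proofs rest on the same two inputs: the formulas from \Cref{lem:S4t1t2} specialized to $w_2=w_1+1$, and the inequalities $a^4t_1t_2t_3<x$, $a^2t_2t_3^2<2x$ from \Cref{lem:Estimates}. Your observation that the specialization collapses to the clean identity $t_2=(a+1)t_1$ is correct and not made explicit in the paper; with $z_1=(a+1)-w_1$ (so in fact $2\le z_1\le a$, since $w_2\le a$ forces $w_1\le a-1$) it gives $t_1\ge z_1/(2a^2)$ and $t_2\ge z_1/(2a)$ uniformly. The paper instead splits at $w_1=a/2$: for $w_1\le a/2$ it uses only the crude bounds $t_1\ge 1/(4a)$, $t_2\ge 1/3$, obtaining $a^3w_0\ll x$ and hence the contribution $\ll x/a^2$ directly; for $w_1>a/2$ it introduces $z_1=a-w_1\in[1,a/2)$ and reduces to the same two constraints $az_1^2w_0\ll x$, $az_1w_0^2\ll x$ you derive, then quotes the \Cref{lem:S25} count for $(x/a)^{2/3}$.

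The trade-off is this: the paper's split at $w_1=a/2$ separates the two terms in the final bound at the level of the parametrization, so no further case analysis on $x$ is needed. Your unified approach is tidier algebraically but pushes the case distinction to the counting stage, where you must notice that for $x>a^4$ the threshold $(x/a)^{1/3}$ exceeds the hard cap $z_1\le a$, and that the resulting $O(x^{1/2})$ bound is then absorbed by $x/a^2$. You handled this correctly. One small slip: the inequalities $t_1\ge z_1/(2a^2)$ and $t_2\ge z_1/(2a)$ follow from $a^2+3a+3\le 2a^2$, not from \Cref{lem:Units}.
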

\begin{proof}
Let $ \alpha \in Q(a, S_{4,3}(a)) $, let $ t_1, t_2, t_3 \in \R_{>0} $ be such that $ \alpha = t_1+t_2\rho^2+t_3(\rho')^{-2} $, and let $ \tau(\alpha) = (m,-w,o) $, where $ (m,w,o) \in \Phi(a,S_{4,3}(a)) $. Since $ w_2 \leq a $ and $ w_2 = w_1+1 $, we have $ w_1 \leq a-1 $.

By \Cref{lem:S4t1t2},
\begin{align*}
	t_1& = \frac{(a+1)w_2-(a+2)w_1}{a^2+3a+3} = \frac{(a+1)-w_1}{a^2+3a+3},\\
	t_2& = 1+\frac{w_1-(a+2)w_2}{a^2+3a+3} = 1+\frac{w_1-(a+2)(w_1+1)}{a^2+3a+3} = 1-\frac{a+2}{a^2+3a+3}-\frac{(a+1)w_1}{a^3+3a+3}.
\end{align*}
We also have
\[
	t_3 = \frac{w}{a^2+3a+3} = w_0+\frac{(a+1)w_1}{a^2+3a+3}+\frac{w_2}{a^2+3a+3} \geq w_0.
\]

We consider two cases. First, assume $ w_1 \leq \frac{a}{2} $. We get
\begin{align*}
	t_1& \geq \frac{(a+1)-\frac{a}{2}}{a^2+3a+3} = \frac{a+2}{2(a^2+3a+3)} \geq \frac{1}{4a},\\
	t_2& \geq 1-\frac{a+2}{a^2+3a+3}-\frac{(a+1)a}{2(a^2+3a+3)} \geq \frac{1}{3}.
\end{align*}
By \Cref{lem:Estimates}, $ a^4t_1t_2t_3 < x $, hence $ a^3w_0 < 12x $. If we let
\[
	M_1(a,x) := \left\{(w_0, w_1, w_2) \in \Z_{\geq 0}^3 : w_0 \geq 1, 1 \leq w_1 \leq \frac{a}{2}, w_2 = w_1+1, a^3w_0 < 12x\right\},
\]
then
\[
	\#M_1(a,x) \ll \frac{x}{a^2}.
\]

Secondly, assume $ w_1 > \frac{a}{2} $ and let $ w_1 = a-z_1 $, where $ 1 \leq z_1 < \frac{a}{2} $. We get
\begin{align*}
	t_1& = \frac{1+z_1}{a^2+3a+3} \geq \frac{z_1}{2a^2},\\
	t_2& = 1-\frac{a+2}{a^2+3a+3}-\frac{(a+1)a}{a^2+3a+3}+\frac{(a+1)z_1}{a^2+3a+3} \geq \frac{z_1}{2a}.
\end{align*}
By \Cref{lem:Estimates}, $ a^2t_2t_3^2 < 2x $ and $ a^4t_1t_2t_3 < x $, hence $ az_1w_0^2 < 4x $ and $ az_1^2w_0 < 4x $. If we let
\[
	M_2(a,x) := \left\{(w_0, w_1, w_2) \in \Z_{\geq 0}^3 : w_0 \geq 1, w_1 = a-z_1, 1 \leq z_1 < \frac{a}{2}, w_2 = w_1+1, az_1w_0^2 < 4x, az_1^2w_0<4x\right\},
\]
then we can estimate as in the proof of \Cref{lem:S25}
\[
	\#M_2(a,x) \ll \left(\frac{x}{a}\right)^{2/3}.
\]

Finally,
\[
	\#\left\{\alpha \in Q(a, S_{4,3}(a)) : \Nm(\alpha) \leq x\right\} \leq \#M_1(a,x)+\#M_2(a,x) \ll \frac{x}{a^2}+\left(\frac{x}{a}\right)^{2/3}.\qedhere
\]
\end{proof}

\begin{lemma}
\label{lem:S44}
Let $ a \in \Z $, $ a \geq 8 $. If $ x \geq 1 $, then
\[
	\#\left\{\alpha \in Q(a, S_{4,4}(a)) : \Nm(\alpha) \leq x\right\} \ll \frac{x}{a^2}+\left(\frac{x}{a}\right)^{1/2}.
\]
\end{lemma}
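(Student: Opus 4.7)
The plan is to follow the structure of \Cref{lem:S43} almost verbatim, the only change being that $ w_0 = 0 $ forces $ t_3 $ to be controlled by $ w_1 $ rather than by $ w_0 $. Given $ \alpha = t_1+t_2\rho^2+t_3(\rho')^{-2} \in Q(a,S_{4,4}(a)) $ with $ \tau(\alpha) = (m,-w,o) $, \Cref{lem:S4t1t2} together with $ w_2 = w_1+1 \geq 1 $ gives
\[
	t_1 = \frac{(a+1)-w_1}{a^2+3a+3}, \qquad t_2 = 1-\frac{a+2}{a^2+3a+3}-\frac{(a+1)w_1}{a^2+3a+3},
\]
and $ w_0 = 0 $ together with $ w_2 = w_1+1 $ reduces $ t_3 $ to $ \frac{(a+2)w_1+1}{a^2+3a+3} \geq \frac{w_1}{2a} $. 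Note that the constraint $ w_2 \leq a $ from $ W_1(a) $ restricts $ w_1 $ to $ [1, a-1] $, so the looser upper bound $ w_1 \leq a+1 $ in the definition of $ S_{4,4}(a) $ is inactive here.

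I split the count on whether $ w_1 \leq a/2 $ or $ w_1 > a/2 $. In the first regime, the same calculation as in \Cref{lem:S43} yields $ t_1 \geq \frac{1}{4a} $ and $ t_2 \geq \frac{1}{3} $, and combined with $ t_3 \geq \frac{w_1}{2a} $, \Cref{lem:Estimates}'s inequality $ a^4 t_1 t_2 t_3 < x $ becomes $ a^2 w_1 \ll x $, producing $ \ll x/a^2 $ admissible values of $ w_1 $.

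In the second regime, I parametrize $ w_1 = a-z_1 $ with $ 1 \leq z_1 < a/2 $. The proof of \Cref{lem:S43} gives $ t_1 \geq \frac{z_1}{2a^2} $ and $ t_2 \geq \frac{z_1}{2a} $; the key point distinguishing this lemma from the previous one is that $ w_1 > a/2 $ forces $ t_3 > 1/3 $ (because $ (a+2)w_1 > a^2/2 $), which takes the role previously played by $ w_0 $. Then $ a^4 t_1 t_2 t_3 < x $ simplifies to $ a z_1^2 \ll x $, so $ z_1 \ll (x/a)^{1/2} $, contributing $ \ll (x/a)^{1/2} $ admissible values. Adding the two contributions yields the claimed bound. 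The only delicate step is verifying that in the second regime $ t_3 $ is genuinely of order unity; after that, the single-variable estimate drops out immediately and no splitting of the form used in \Cref{lem:S25} is needed.
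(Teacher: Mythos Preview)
Your argument is correct and follows the paper's proof essentially line by line: the same split at $w_1 \le a/2$, the same lower bounds $t_1 \ge \tfrac{1}{4a}$, $t_2 \ge \tfrac{1}{3}$ in the first regime, the same substitution $w_1 = a - z_1$ with $t_1 \ge \tfrac{z_1}{2a^2}$, $t_2 \ge \tfrac{z_1}{2a}$ in the second. The only cosmetic difference is that you compute $t_3 = \frac{(a+2)w_1+1}{a^2+3a+3}$ exactly and obtain $t_3 > \tfrac{1}{3}$ in the second regime, whereas the paper uses the slightly looser $t_3 \ge \frac{(a+1)w_1}{a^2+3a+3}$ and gets $t_3 > \tfrac{1}{4}$; this has no effect on the outcome.
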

\begin{proof}
Let $ \alpha \in Q(a,S_{4,4}(a)) $, let $ t_1, t_2, t_3 \in \R_{>0} $ be such that $ \alpha = t_1+t_2\rho^2+t_3(\rho')^{-2} $, and let $ \tau(\alpha) = (m,-w,o) $, where $ (m, w, o) \in \Phi(a, S_{4,4}(a)) $. We have
\[
	t_3 = \frac{w}{a^2+3a+3} = w_0+\frac{(a+1)w_1}{a^2+3a+3}+\frac{w_2}{a^2+3a+3} \geq \frac{(a+1)w_1}{a^2+3a+3} > \frac{w_1}{2a}.
\]

We consider two cases. First, assume $ w_1 \leq \frac{a}{2} $. As in the proof of \Cref{lem:S43}, $ t_1 \geq \frac{1}{4a} $ and $ t_2 \geq \frac{1}{3} $. If $ \Nm(\alpha) \leq x $, then $ a^4t_1t_2t_3 < x $ by \Cref{lem:Estimates}, hence $ a^2w_1 \leq 24x $. If we let
\[
	M_1(a,x) := \left\{(w_0,w_1,w_2) \in \Z_{\geq 0}^3 : w_0 = 0, 1 \leq w_1 \leq \frac{a}{2}, w_2 = w_1+1, a^2w_1 \leq 24x\right\},
\]
then
\[
	\#M_1(a,x) \ll \frac{x}{a^2}.
\]

Secondly, assume $ w_1 > \frac{a}{2} $ and let $ w_1 = a-z_1 $, where $ 1 \leq z_1 < \frac{a}{2} $. As in the proof of \Cref{lem:S43}, $ t_1 \geq \frac{z_1}{2a^2} $ and $ t_2 \geq \frac{z_1}{2a} $. Moreover, $ t_3 > \frac{1}{4} $. If $ \Nm(\alpha) \leq x $, then $ a^4t_1t_2t_3 < x $, hence $ az_1^2 < 16x $. Letting
\[
	M_2(a,x) := \left\{(w_0,w_1,w_2) \in \Z_{\geq 0}^3 : w_0 = 0, w_1 = a-z_1, 1 \leq z_1 < \frac{a}{2}, w_2 = w_1+1, az_1^2 < 16x\right\},
\]
we have
\[
	\#M_2(a,x) \ll \left(\frac{x}{a}\right)^{1/2}.
\]

Finally,
\[
	\#\left\{\alpha \in Q(a, S_{4,4}(a)) : \Nm(\alpha) \leq x\right\} \leq \#M_1(a,x)+\#M_2(a,x) \ll \frac{x}{a^2}+\left(\frac{x}{a}\right)^{1/2}.\qedhere
\]
\end{proof}

\begin{lemma}
\label{lem:S45}
Let $ a \in \Z $, $ a \geq 8 $. If $ x \geq 1 $, then
\[
	\#\left\{\alpha \in Q(a, S_{4,5}(a)) : \Nm(\alpha) \leq x\right\} \ll \frac{x}{a^2}(\log a)^2.
\]
\end{lemma}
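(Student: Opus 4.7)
The plan is to parametrize $w_1$ and $w_2$ so that $t_1$ and $t_2$ become visibly positive linear forms in the new variables, and then to sum the resulting constraint. Let $\alpha \in Q(a, S_{4,5}(a))$, write $\alpha = t_1 + t_2 \rho^2 + t_3 (\rho')^{-2}$ with $\tau(\alpha) = (m, -w, o)$, so that by the definition of $S_{4,5}(a)$ we have $m = m_0(a,w)+1$, $o = o_0(a,w)+1$, $w_0 \geq 1$, $1 \leq w_1 \leq a+1$, and $w_2 \geq w_1+2$. I will introduce the change of variables $z_1 := a+1-w_1$, $z_2 := w_2-w_1-1$, and $u := z_1-z_2$. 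The constraints translate to $z_2 \geq 1$, $u \geq 2$, and $z_2 + u \leq a$ (the last coming from $w_2 \leq a$ together with $w_1 \geq 1$).

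Next, substituting $w_1 = a+1-z_1$ and $w_2 = a+2-z_1+z_2$ into the relevant branches of \Cref{lem:S4t1t2} (namely $w_1 \leq a+1$ with $w_1 < w_2$ for $t_1$, and $w_2 \geq 1$ for $t_2$) and simplifying, I expect to obtain the clean identities
\[
	t_1 = \frac{(a+2)z_2 + u}{a^2+3a+3}, \qquad t_2 = \frac{(a+1)u - z_2}{a^2+3a+3},
\]
together with the straightforward $t_3 \geq w_0$. Applying \Cref{lem:Estimates} to $\Nm(\alpha) \leq x$ gives $a^4 t_1 t_2 t_3 < x$, and after absorbing the harmless factor $a^4/(a^2+3a+3)^2 \asymp 1$, this becomes the master inequality
\[
	\bigl((a+2)z_2 + u\bigr)\bigl((a+1)u - z_2\bigr) w_0 \ll x.
\]

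It then remains to count triples $(z_2, u, w_0) \in \Z_{\geq 1} \times \Z_{\geq 2} \times \Z_{\geq 1}$ with $z_2 + u \leq a$ satisfying this master inequality. Summing first over $w_0$ gives
\[
	\sum_{z_2=1}^{a-2} \sum_{u=2}^{a-z_2} \frac{x}{\bigl((a+2)z_2 + u\bigr)\bigl((a+1)u - z_2\bigr)}.
\]
I will estimate $(a+2)z_2 + u \geq (a+2)z_2 \gg a z_2$ crudely, and for the inner factor I will use the observation that for $u \geq 2$ and $z_2 \leq a-2$ one has $z_2 < (a+1)u/2$, so $(a+1)u - z_2 \geq (a+1)u/2 \gg au$. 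These bounds combined yield
\[
	\ll \frac{x}{a^2} \sum_{z_2=1}^{a} \frac{1}{z_2} \sum_{u=2}^{a} \frac{1}{u} \ll \frac{x(\log a)^2}{a^2},
\]
as required. The only somewhat delicate point is securing the lower bound $(a+1)u - z_2 \gg au$, and this is exactly the place where the hypothesis $w_2 \geq w_1 + 2$ defining $S_{4,5}(a)$ (which forces $u \geq 2$) is essential; the boundary cases $w_2 \in \{w_1, w_1 + 1\}$ are handled separately in the sublemmas for $S_{4,3}(a)$ and $S_{4,4}(a)$.
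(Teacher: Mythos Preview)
Your proof is correct and follows essentially the same approach as the paper. The paper's change of variables is $z_1 = w_2 - w_1 - 1$ and $z_2 = (a+1) - w_2$, which coincide (up to a shift by~$1$) with your $z_2$ and $u$; the paper then bounds $t_1 \geq z_1/(2a)$, $t_2 \geq z_2/(2a)$, $t_3 \geq w_0$ directly and arrives at $a^2 z_1 z_2 w_0 < 4x$, while you compute the exact identities for $t_1,t_2$ before extracting the same lower bounds---the resulting double harmonic sum and the final estimate are identical.
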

\begin{proof}
Let $ \alpha \in Q(a, S_{4,5}(a)) $, let $ t_1, t_2, t_3 \in \R_{>0} $ be such that $ \alpha = t_1+t_2\rho^2+t_3(\rho')^{-2} $, and let $ \tau(\alpha) = (m,-w,o) $, where $ (m,w,o) \in \Phi(a,S_{4,5}(a)) $. Let $ z_1 = w_2-w_1-1 $ and $ z_2 = (a+1)-w_2 $. Since $ w_2 \geq w_1+2 $, $ w_1 \geq 1 $, and $ w_2 \leq a $, we have $ 1 \leq z_1 \leq a-2 $ and $ 1 \leq z_2 \leq a-2 $. By \Cref{lem:S4t1t2},
\begin{align*}
	t_1& = \frac{(a+1)w_2-(a+2)w_1}{a^2+3a+3} = \frac{(a+1)w_2-(a+2)(w_2-1-z_1)}{a^2+3a+3} \geq \frac{(a+2)-w_2+(a+2)z_1}{a^2+3a+3} \geq \frac{z_1}{2a},\\
	t_2& = 1+\frac{w_1-(a+2)w_2}{a^2+3a+3} = 1+\frac{w_1-(a+2)(a+1)}{a^2+3a+3}+\frac{(a+2)z_2}{a^2+3a+3} \geq \frac{z_2}{2a}.
\end{align*}
We also have
\[
	t_3 = \frac{w}{a^2+3a+3} = w_0+\frac{(a+1)w_1}{a^2+3a+3}+\frac{w_2}{a^2+3a+3} \geq w_0.
\]

By \Cref{lem:Estimates}, $ a^4t_1t_2t_3 < x $, hence $ a^2z_1z_2w_0 < 4x $. If we let
\[
	M(a,x) := \left\{(w_0,z_1,z_2) \in \Z_{\geq 0}^3 : w_0 \geq 1, 1 \leq z_1 \leq a-2, 1 \leq z_2 \leq a-2,\\
	 a^2z_1z_2w_0 < 4x\right\},
\]
then
\[
	\#M(a,x) \leq \sum_{1 \leq z_1 \leq a-2}\sum_{1 \leq z_2 \leq a-2}\frac{4x}{a^2z_1z_2} \ll \frac{x}{a^2}\sum_{1 \leq z_1 \leq a-2}\sum_{1 \leq z_2 \leq a-2}\frac{1}{z_1z_2} \ll \frac{x}{a^2}(\log a)^2.
\]

Finally,
\[
	\#\left\{\alpha \in Q(a,S_{4,5}(a)) : \Nm(\alpha)\leq x\right\} \leq \#M(a,x) \ll \frac{x}{a^2}(\log a)^2.\qedhere
\]
\end{proof}

\begin{lemma}
\label{lem:S46}
Let $ a \in \Z $, $ a \geq 8 $. If $ x \geq 1 $, then
\[
	\#\left\{\alpha \in Q(a,S_{4,6}(a)) : \Nm(\alpha)\leq x\right\} \ll \frac{x}{a^2}(\log a).
\]
\end{lemma}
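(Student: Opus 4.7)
The plan is to follow the pattern of \Cref{lem:S45}, while exploiting an additional linear identity in the $w_i$ that is forced by $w_0=0$. Given $\alpha \in Q(a, S_{4,6}(a))$, I would write $\alpha = t_1+t_2\rho^2+t_3(\rho')^{-2}$ and $\tau(\alpha) = (m,-w,o)$ with $(m,w,o) \in \Phi(a,S_{4,6}(a))$, and then introduce the substitution $z_1 := w_2 - w_1 - 1$ and $z_2 := (a+1) - w_2$. The defining conditions of $S_{4,6}$ ($w_0=0$, $1 \leq w_1 \leq a+1$, $w_2 \geq w_1+2$, $w_2 \leq a$) translate into $w_1, z_1, z_2 \geq 1$ together with the key identity
\[
	w_1 + z_1 + z_2 = a.
\]
Using \Cref{lem:S4t1t2} and $w_0 = 0$, a direct simplification yields the lower bounds
\[
	t_1 \geq \frac{z_1}{2a}, \qquad t_2 \geq \frac{z_2}{2a}, \qquad t_3 \geq \frac{w_1}{2a}.
\]

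Combining these with $a^4 t_1 t_2 t_3 < x$ from \Cref{lem:Estimates} reduces the problem to counting triples $(w_1, z_1, z_2) \in \Z_{\geq 1}^3$ with $w_1 + z_1 + z_2 = a$ and $a\, w_1 z_1 z_2 < 8x$. Since the three quantities sum to $a$, at least one of them is $\geq a/3$, so I would split the count into three symmetric cases. In the case $w_1 \geq a/3$ one obtains $z_1 z_2 < 24x/a^2$; because $w_1$ is then determined by $(z_1,z_2)$ via the identity, the contribution is at most
\[
	\#\{(z_1,z_2) \in \Z_{\geq 1}^2 : z_1,z_2 \leq a,\ z_1 z_2 < 24x/a^2\} \leq \sum_{z_1=1}^{a}\min\!\left(a,\frac{24x}{a^2 z_1}\right) \ll \frac{x}{a^2}\log a,
\]
where the final estimate follows by splitting the sum at $z_1 \asymp x/a^3$ (and observing that if $24x/a^3 \geq a$, then $a^2 \ll (x/a^2)\log a$ already). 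The cases $z_1 \geq a/3$ and $z_2 \geq a/3$ are handled identically.

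The main obstacle is spotting the identity $w_1 + z_1 + z_2 = a$, which is the mechanism that yields the gain of one factor of $a$ in the denominator. Without it, the naive approach that mirrors \Cref{lem:S45} (using only $t_3 \geq w_1/(2a)$ and summing freely over $w_1, z_1, z_2$ under the single constraint $a\, w_1 z_1 z_2 < 8x$) produces only $(x/a)(\log a)^2$, which is off by a factor of $a/\log a$ from the target.
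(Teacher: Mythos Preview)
Your proposal is correct and follows essentially the same approach as the paper. Both arguments introduce $z_1 = w_2-w_1-1$, $z_2=(a+1)-w_2$, use the identity $w_1+z_1+z_2=a$, apply the bounds $t_1\ge z_1/(2a)$, $t_2\ge z_2/(2a)$, $t_3>w_1/(2a)$, and then split according to which variable is $\ge a/3$; the paper organizes this as four cases on the pair $(z_1,z_2)$ while you use three symmetric cases on $(w_1,z_1,z_2)$, but the content is the same. (Incidentally, your final sum needs no splitting: $\sum_{z_1=1}^{a}\min(a,24x/(a^2z_1))\le \sum_{z_1=1}^{a}24x/(a^2z_1)\ll (x/a^2)\log a$ directly.)
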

\begin{proof}
Let $ \alpha \in Q(a, S_{4,6}(a)) $, let $ t_1, t_2, t_3 \in \R_{>0} $ be such that $ \alpha = t_1+t_2\rho^2+t_3(\rho')^{-2} $, and let $ \tau(\alpha) = (m,-w,o) $, where $ (m,w,o) \in \Phi(a,S_{4,6}(a)) $. We again let $ z_1 = w_2-w_1-1 $ and $ z_2 = (a+1)-w_2 $. As in \Cref{lem:S45}, $ 1 \leq z_1 \leq a-2 $, $ 1 \leq z_2 \leq a-2 $, and $ t_1 \geq \frac{z_1}{2a} $, $ t_2 \geq \frac{z_2}{2a} $. We also have
\[
	t_3 = \frac{w}{a^2+3a+3} = w_0+\frac{(a+1)w_1}{a^2+3a+3}+\frac{w_2}{a^2+3a+3} \geq \frac{(a+1)w_1}{a^2+3a+3} > \frac{w_1}{2a}.
\]

By \Cref{lem:Estimates}, $ a^4t_1t_2t_3 < x $. Now we distinguish four cases. If $ z_1 \leq \frac{a}{3} $ and $ z_2 \leq \frac{a}{3} $, then $ w_1 = a-z_1-z_2 \geq \frac{a}{3} $, hence $ t_3 > \frac{1}{6} $, and it follows that $ a^2z_1z_2 < 24x $. If we let
\[
	M_1(a,x) := \left\{(w_0, z_1,z_2) \in \Z_{\geq 0}^3 : w_0 = 0, 1 \leq z_1 \leq \frac{a}{3}, 1 \leq z_2 \leq \frac{a}{3}, a^2z_1z_2 < 24x\right\},
\]
then
\[
	\#M_1(a,x) \leq \sum_{1 \leq z_1 \leq \frac{a}{3}}\frac{24x}{a^2z_1} \ll \frac{x}{a^2}\sum_{1 \leq z_1 \leq \frac{a}{3}}\frac{1}{z_1} \ll \frac{x}{a^2}(\log a).
\]
If $ z_1 \leq \frac{a}{3} $ and $ z_2 > \frac{a}{3} $, then $ t_2 \geq \frac{z_2}{2a} > \frac{1}{6} $, and it follows that $ a^2z_1w_1 < 24x $. We note that $ z_2 $ is determined in terms of $ z_1 $ and $ w_1 $ because $ z_2 = (a+1)-w_2 = a-z_1-w_1 $. If we let
\[
	M_2(a,x) := \left\{(w_0, z_1, z_2) \in \Z_{\geq 0}^3 : w_0 = 0, 1 \leq z_1 \leq \frac{a}{3}, z_2 = a-z_1-w_1, a^2z_1w_1 < 24x\right\},
\]
then
\[
	\#M_2(a,x) \ll \frac{x}{a^2}(\log a).
\]
If $ z_1 > \frac{a}{3} $ and $ z_2 \leq \frac{a}{3} $, then $ t_1 \geq \frac{z_1}{2a} > \frac{1}{6} $ and $ a^2z_2w_1 < 24x $. We note that $ z_1 $ is determined in terms of $ z_2 $ and $ w_1 $ because $ z_1 = w_2-w_1-1 = a-z_2-w_1 $. If we let
\[
	M_3(a,x) := \left\{(w_0, z_1, z_2) \in \Z_{\geq 0}^3 : w_0 = 0, 1 \leq z_2 \leq \frac{a}{3}, z_1 = a-z_2-w_1, a^2z_2w_1 < 24x\right\},
\]
then
\[
	\#M_3(a,x) \ll \frac{x}{a^2}(\log a)
\]
as in the previous case. If $ z_1 > \frac{a}{3} $ and $ z_2 > \frac{a}{3} $, then $ t_1 > \frac{1}{6} $ and $ t_2 > \frac{1}{6} $, hence $ a^3w_1 < 72x $. If we let
\[
	M_4(a,x) := \left\{(w_0,z_1,z_2) \in \Z_{\geq 0}^3 : w_0 = 0, \frac{a}{3} < z_1 \leq a-2, z_2 = a-z_1-w_1, a^3w_1 < 72x\right\},
\]
then
\[
	\#M_4(a,x) \ll \frac{x}{a^2}.
\]
Finally,
\[
	\#\left\{\alpha \in Q(a,S_{4,6}(a)) : \Nm(\alpha)\leq x\right\} \leq \#M_1(a,x)+\#M_2(a,x)+\#M_3(a,x)+\#M_4(a,x) \ll \frac{x}{a^2}(\log a).\qedhere
\]
\end{proof}

\begin{lemma}
\label{lem:S47}
Let $ a \in \Z $, $ a \geq 8 $. If $ x \geq 1 $, then
\[
	\#\left\{\alpha \in Q(a, S_{4,7}(a)) : \Nm(\alpha) \leq x\right\} \ll \frac{x}{a^2}(\log a).
\]
\end{lemma}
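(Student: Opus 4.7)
The plan is to parametrize the constraint $ w_1 \geq w_2 \geq 1 $, $ w_0 \geq 1 $ of $ S_{4,7}(a) $ by the nonnegative integer variables $ y_1 := (a+1) - w_1 $ and $ y_2 := w_1 - w_2 $. Observe that $ w_2 \geq 1 $ forces $ w_1 \leq a+1 $ (the alternative $ w_1 = a+2 $ in $ W_1(a) $ would require $ w_2 = 0 $), so $ (w_0, y_1, y_2) $ ranges over $ \Z_{\geq 1}\times\Z_{\geq 0}^2 $ with $ y_1 + y_2 \leq a $, and conversely $ (w_1, w_2) = ((a+1)-y_1,\,(a+1)-y_1-y_2) $. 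The payoff of this substitution is the identity $ w_2 + y_1 = (a+1) - y_2 $, which is what will drive the cancellation below.

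Applying \Cref{lem:S4t1t2} in the range $ w_1 \leq a+1 $, $ w_1 \geq w_2 $, $ w_2 \geq 1 $, a direct computation gives
\[
	t_1 = \frac{1 + (a+1)w_2 + (a+2)y_1}{a^2+3a+3}, \qquad t_2 = \frac{(a+2)(y_2+1) + (a+1)y_1}{a^2+3a+3}.
\]
The key inequality is $ (a+1)w_2 + (a+2)y_1 \geq (a+1)(w_2+y_1) = (a+1)((a+1)-y_2) $, which yields
\[
	t_1 \geq \frac{(a+1)((a+1)-y_2)}{a^2+3a+3}, \qquad t_2 \geq \frac{(a+2)(y_2+1)}{a^2+3a+3}.
\]
Combined with $ t_3 \geq w_0 $ and \Cref{lem:Estimates} (using $ (a+1)(a+2) \geq a^2 $ and $ (a^2+3a+3)^2 \leq 4a^4 $ for $ a \geq 4 $), this produces the single inequality
\[
	a^2 ((a+1)-y_2)(y_2+1) w_0 < 4x.
\]

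With this inequality in hand the counting step is immediate. For each fixed $ y_2 \in \{0,1,\ldots,a\} $ the variable $ y_1 $ ranges over the $ (a+1-y_2) $ values $ 0, 1, \ldots, a-y_2 $, and for each such $ (y_1, y_2) $ the number of admissible $ w_0 \geq 1 $ is at most $ \frac{4x}{a^2((a+1)-y_2)(y_2+1)} $. The factor $ (a+1-y_2) $ cancels, leaving
\[
	\#\{\alpha \in Q(a,S_{4,7}(a)) : \Nm(\alpha) \leq x\} \leq \sum_{y_2=0}^{a}\frac{4x}{a^2(y_2+1)} \ll \frac{x\log a}{a^2}.
\]

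The main design choice is the change of variables: with a more naive parametrization one would obtain a bound of the form $ x(\log a)^2/a^2 $, as in \Cref{lem:S45}. The improvement to a single $ \log a $ comes from the cancellation of $ (a+1-y_2) $ between the number of choices of $ y_1 $ and the lower bound on $ t_1 $, which rests on the identity $ w_2+y_1 = (a+1)-y_2 $. Verifying this cancellation, and confirming that the displayed lower bounds for $ t_1 $ and $ t_2 $ are tight enough to absorb the loss from $ (a^2+3a+3)^2 $ in the denominator, is the only subtle point.
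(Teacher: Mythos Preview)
Your proof is correct, but the route differs from the paper's. The paper does not change variables to $(y_1,y_2)$; instead it keeps $(w_1,w_2)$, bounds $t_1 \geq \frac{w_2}{2a}$ and $t_2 \geq 1-\frac{(a+2)w_2}{a^2+3a+3}$ (dropping the $w_1$-contribution in each), and then splits into the two cases $w_2 \leq a/2$ and $w_2 > a/2$. In the first case $t_2 \gg 1$ and the index $w_2$ carries the harmonic sum; in the second case one writes $w_2=(a+1)-z_2$, so $t_1 \gg 1$ and $z_2$ carries the harmonic sum. In both cases the free variable $w_1$ contributes a factor $\asymp a$ that is absorbed by the extra $a$ coming from $a^4 t_1 t_2 t_3 < x$ with one of $t_1,t_2 \gg 1/a$ replaced by $\gg 1$.

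Your substitution $(y_1,y_2)=((a+1)-w_1,\,w_1-w_2)$ unifies these two cases: the exact identities for $t_1$ and $t_2$ make the product $t_1 t_2 \gg ((a+1)-y_2)(y_2+1)/a^2$ symmetric, and the range of $y_1$ has size exactly $(a+1)-y_2$, so the cancellation happens in one line without a case split. The paper's version is more in keeping with the template used throughout \Cref{sec:S4} (estimate each $t_i$ separately, split on a threshold), while yours is shorter and makes the mechanism behind the single $\log a$ transparent.
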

\begin{proof}
Let $ \alpha \in Q(a, S_{4,7}(a)) $, let $ t_1, t_2, t_3 \in \R_{>0} $ be such that $ \alpha = t_1+t_2\rho^2+t_3(\rho')^{-2} $, and let $ \tau(\alpha) = (m, -w, o) $, where $ (m, w, o) \in \Phi(a, S_{4,7}(a)) $. Since $ w_1 \geq w_2 $ and $ w_2 \geq 1 $, by \Cref{lem:S4t1t2},
\begin{align*}
	t_1& = \frac{(a+1)w_2-(a+2)w_1}{a^2+3a+3}+1 \geq \frac{(a+1)w_2}{a^2+3a+3}+1-\frac{(a+2)(a+1)}{a^2+3a+3} \geq \frac{(a+1)w_2}{a^2+3a+3} \geq \frac{w_2}{2a},\\
	t_2& = 1+\frac{w_1-(a+2)w_2}{a^2+3a+3} \geq 1-\frac{(a+2)w_2}{a^2+3a+3}.
\end{align*}
We also have
\[
	t_3 = \frac{w}{a^2+3a+3} = w_0+\frac{(a+1)w_1}{a^2+3a+3}+\frac{w_2}{a^2+3a+3} \geq w_0.
\]

We consider two cases. First, assume $ w_2 \leq \frac{a}{2} $. Then
\[
	t_2 \geq 1-\frac{(a+2)w_2}{a^2+3a+3} \geq 1-\frac{(a+2)a}{2(a^2+3a+3)} \geq \frac{1}{2}.
\]

If $ \Nm(\alpha) \leq x $, then by \Cref{lem:Estimates}, $ a^4t_1t_2t_3 < x $, hence $ a^3w_0w_2 < 4x $. If we let
\[
	M_1(a, x) := \left\{(w_0, w_1, w_2) \in \Z_{\geq 0}^3 : w_0 \geq 1, w_1 \geq w_2, 1 \leq w_2 \leq \frac{a}{2}, a^3w_0w_2 < 4x\right\},
\]
then
\[
	\#M_1(a, x) \leq \sum_{1 \leq w_2 \leq \frac{a}{2}}\sum_{w_2 \leq w_1 \leq a+1}\frac{4x}{a^3w_2} \ll \frac{x}{a^2}\sum_{1 \leq w_2 \leq \frac{a}{2}}\frac{1}{w_2} \ll \frac{x}{a^2}(\log a).
\]

Secondly, assume $ w_2 > \frac{a}{2} $ and let $ w_2 = (a+1)-z_2 $, where $ 1 \leq z_2 < \frac{a}{2}+1 $. In this case we have
\begin{align*}
	t_1& \geq \frac{w_2}{2a} > \frac{1}{4},\\
	t_2& \geq 1-\frac{(a+2)w_2}{a^2+3a+3} = 1-\frac{(a+2)((a+1)-z_2)}{a^2+3a+3} \geq \frac{(a+2)z_2}{a^2+3a+3} \geq \frac{z_2}{2a}.
\end{align*}

If $ \Nm(\alpha) \leq x $, then by \Cref{lem:Estimates}, $ a^4t_1t_2t_3 < x $, hence $ a^3w_0z_2 < 8x $. If we let
\[
	M_2(a, x) := \left\{(w_0, w_1, w_2) \in \Z_{\geq 0}^3 : w_0 \geq 1, w_1 \geq w_2, w_2 = (a+1)-z_2, 1 \leq z_2 < \frac{a}{2}+1, a^3w_0z_2 < 8x\right\},
\]
then
\[
	\#M_2(a, x) \leq \sum_{1 \leq z_2 < \frac{a}{2}+1}\sum_{1 \leq w_1 \leq a+1}\frac{8x}{a^3z_2} \ll \frac{x}{a^2}\sum_{1 \leq z_2 < \frac{a}{2}+1}\frac{1}{z_2} \ll \frac{x}{a^2}(\log a).
\]

Finally,
\[
	\#\left\{\alpha \in Q(a, S_{4,7}(a)) : \Nm(\alpha) \leq x\right\} \leq \#M_1(a, x)+\#M_2(a, x) \ll \frac{x}{a^2}(\log a).\qedhere
\]
\end{proof}

\begin{lemma}
\label{lem:S48}
Let $ a \in \Z $, $ a \geq 8 $. If $ x \geq 1 $, then
\[
	\#\left\{\alpha \in Q(a, S_{4,8}(a)) : \Nm(\alpha) \leq x\right\} \ll \frac{x}{a^2}(\log a).
\]
\end{lemma}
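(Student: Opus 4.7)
The plan is to mirror the proof of \Cref{lem:S47}, the analogue where $w_0 \geq 1$, with the single substantive change that $w_0 = 0$ forces the lower bound on $t_3$ to come from the remaining $(a+1)w_1$ term. Concretely, for $\alpha \in Q(a, S_{4,8}(a))$ with $\alpha = t_1+t_2\rho^2+t_3(\rho')^{-2}$ and $(m,w,o) \in \Phi(a, S_{4,8}(a))$, the $w_2 \geq 1$ branch of \Cref{lem:S4t1t2} gives the same estimates $t_1 \geq w_2/(2a)$ and $t_2 \geq 1-(a+2)w_2/(a^2+3a+3)$ used in \Cref{lem:S47}, while
\[
	t_3 = \frac{(a+1)w_1+w_2}{a^2+3a+3} \geq \frac{w_1}{2a}
\]
replaces the bound $t_3 \geq w_0$ used there.

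I would then split on the size of $w_2$. In the regime $w_2 \leq a/2$ one has $t_2 \geq 1/2$, so $a^4 t_1 t_2 t_3 < x$ from \Cref{lem:Estimates} yields the constraint $a^2 w_1 w_2 < 8x$, and summing over $w_2$ with the pointwise bound on the number of admissible $w_1 \in [w_2, a+1]$ gives
\[
	\sum_{1 \leq w_2 \leq a/2} \frac{8x}{a^2 w_2} \ll \frac{x}{a^2}(\log a).
\]
In the complementary regime $w_2 > a/2$, I would reparametrize $w_2 = (a+1)-z_2$ with $1 \leq z_2 < a/2+1$, obtaining $t_1 > 1/4$ and $t_2 \geq z_2/(2a)$; applying \Cref{lem:Estimates} then gives $a^2 w_1 z_2 < 16x$, and the corresponding sum
\[
	\sum_{1 \leq z_2 < a/2+1} \frac{16x}{a^2 z_2} \ll \frac{x}{a^2}(\log a)
\]
closes this case. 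Adding the two contributions proves the lemma.

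I do not expect any real obstacle. The only mild subtlety, compared with \Cref{lem:S47}, is that the free summation variable becomes $w_1$ rather than $w_0$, and $w_1$ is constrained to the short intervals $[w_2, a+1]$ or $[a+1-z_2, a+1]$ rather than being unrestricted. This is harmless, however, because the pointwise constraint $a^2 w_1 w_2 < 8x$ (respectively $a^2 w_1 z_2 < 16x$) already bounds the number of admissible $w_1$ by $\ll x/(a^2 w_2)$ (respectively $\ll x/(a^2 z_2)$) independently of the interval length, so the resulting $\log a$ factor matches the one in \Cref{lem:S47} exactly.
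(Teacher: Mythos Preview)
Your proposal is correct and follows the paper's proof essentially line for line: the same lower bounds $t_1 \geq w_2/(2a)$, $t_2 \geq 1-(a+2)w_2/(a^2+3a+3)$, $t_3 \geq w_1/(2a)$, the same split at $w_2 \leq a/2$ versus $w_2 > a/2$ with the substitution $w_2 = (a+1)-z_2$, and the same resulting constraints $a^2w_1w_2 < 8x$ and $a^2w_1z_2 < 16x$ summed to $\ll (x/a^2)\log a$. Your closing remark about the bounded range of $w_1$ being harmless is also exactly right and matches how the paper handles the $M_1$ and $M_2$ sums.
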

\begin{proof}
Let $ \alpha \in Q(a, S_{4,8}(a)) $, let $ t_1, t_2, t_3 \in \R_{>0} $ be such that $ \alpha = t_1+t_2\rho^2+t_3(\rho')^{-2} $, and let $ \tau(\alpha) = (m, -w, o) $, where $ (m, w, o) \in \Phi(a, S_{4,8}(a)) $. Since $ w_1 \geq w_2 $ and $ w_2 \geq 1 $, we get as in the proof of \Cref{lem:S47}
\begin{align*}
	t_1& \geq \frac{w_2}{2a},\\
	t_2& \geq 1-\frac{(a+2)w_2}{a^2+3a+3}.
\end{align*}
We also have
\[
	t_3 = \frac{w}{a^2+3a+3} = w_0+\frac{(a+1)w_1}{a^2+3a+3}+\frac{w_2}{a^2+3a+3} \geq \frac{(a+1)w_1}{a^2+3a+3} \geq \frac{w_1}{2a}.
\]

We consider two cases. First, assume $ w_2 \leq \frac{a}{2} $. Then
\[
	t_2 \geq 1-\frac{(a+2)w_2}{a^2+3a+3} \geq 1-\frac{(a+2)a}{2(a^2+3a+3)} \geq \frac{1}{2}.
\]

If $ \Nm(\alpha) \leq x $, then by \Cref{lem:Estimates}, $ a^4t_1t_2t_3 < x $, hence $ a^2w_1w_2 < 8x $. If we let
\[
	M_1(a, x) := \left\{(w_0, w_1, w_2) \in \Z_{\geq 0}^3 : w_0 = 0, w_1 \geq w_2, 1 \leq w_2 \leq \frac{a}{2}, a^2w_1w_2 < 8x\right\},
\]
then
\[
	\#M_1(a, x) \leq \sum_{1 \leq w_2 \leq \frac{a}{2}}\frac{8x}{a^2w_2} \ll \frac{x}{a^2}\sum_{1 \leq w_2 \leq \frac{a}{2}}\frac{1}{w_2} \ll \frac{x}{a^2}(\log a).
\]

Secondly, assume $ w_2 > \frac{a}{2} $ and let $ w_2 = (a+1)-z_2 $, where $ 1 \leq z_2 < \frac{a}{2}+1 $. Then
\begin{align*}
	t_1& \geq \frac{w_2}{2a} > \frac{1}{4},\\
	t_2& \geq 1-\frac{(a+2)w_2}{a^2+3a+3} \geq 1-\frac{(a+2)((a+1)-z_2)}{a^2+3a+3} \geq \frac{(a+2)z_2}{a^2+3a+3} \geq \frac{z_2}{2a}.
\end{align*}

If $ \Nm(\alpha) \leq x $, then by \Cref{lem:Estimates}, $ a^4t_1t_2t_3 < x $, hence $ a^2w_1z_2 < 16x $. If we let
\[
	M_2(a, x) := \left\{(w_0, w_1, w_2) \in \Z_{\geq 0}^3 : w_0 = 0, w_1 \geq w_2, w_2 = (a+1)-z_2, 1 \leq z_2 < \frac{a}{2}+1, a^2w_1z_2 < 16x\right\},
\]
then
\[
	\#M_2(a, x) \leq \sum_{1 \leq z_2 < \frac{a}{2}+1}\frac{16x}{a^2z_2} \ll \frac{x}{a^2}\sum_{1 \leq z_2 < \frac{a}{2}+1}\frac{1}{z_2} \ll \frac{x}{a^2}(\log a).
\]

Finally,
\[
	\#\left\{\alpha \in Q(a, S_{4,8}(a)) : \Nm(\alpha) \leq x\right\} \leq \#M_1(a, x)+\#M_2(a, x) \ll \frac{x}{a^2}(\log a).\qedhere
\]
\end{proof}

\begin{lemma}
\label{lem:S49}
Let $ a \in \Z $, $ a \geq 8 $. If $ x \geq 1 $, then
\[
	\#\left\{\alpha \in Q(a, S_{4,9}(a)) : \Nm(\alpha) \leq x\right\} \ll \frac{x}{a^3}.
\]
\end{lemma}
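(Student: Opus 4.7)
The plan is to follow the same template as \Cref{lem:S41,lem:S42,lem:S43,lem:S44,lem:S45,lem:S46,lem:S47,lem:S48}: take $\alpha \in Q(a, S_{4,9}(a))$ with $\tau(\alpha)=(m,-w,o)$, where $(m,w,o)\in\Phi(a,S_{4,9}(a))$; compute $t_1,t_2,t_3$ from \Cref{lem:S4t1t2} and the formula $t_3 = w/(a^2+3a+3) = w_0+\frac{(a+1)w_1}{a^2+3a+3}+\frac{w_2}{a^2+3a+3}$; extract useful lower bounds; then apply \Cref{lem:Estimates}. Since $w_1=a+2$ and $w_2=0$ are completely fixed in $S_{4,9}(a)$, the only free parameter is $w_0$.

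With $w_1=a+2$, $w_2=0$, the first case in the formula for $t_1$ in \Cref{lem:S4t1t2} applies and gives
\[
	t_1 = 2-\frac{(a+2)^2}{a^2+3a+3} = \frac{a^2+2a+2}{a^2+3a+3}.
\]
For $a\geq 8$, this is at least, say, $\tfrac{1}{2}$. The third case in the formula for $t_2$ gives
\[
	t_2 = \frac{a+2}{a^2+3a+3} \geq \frac{1}{2a}.
\]
Finally, since $w_1=a+2$ and $w_2=0$,
\[
	t_3 = w_0 + \frac{(a+1)(a+2)}{a^2+3a+3} = w_0+\frac{a^2+3a+2}{a^2+3a+3} \geq w_0+\tfrac{1}{2}.
\]

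If $\Nm(\alpha)\leq x$, then by \Cref{lem:Estimates}, $a^4 t_1 t_2 t_3 < x$, which together with the three lower bounds yields $a^3(2w_0+1) \ll x$. Consequently, the set
\[
	M(a,x) := \left\{w_0 \in \Z_{\geq 0} : a^3(2w_0+1) \leq c\,x\right\}
\]
(for a suitable absolute constant $c$) contains all admissible $w_0$, so the number of $\alpha$ in question is bounded by $\#M(a,x)$. If $x < a^3/c$, no $w_0\geq 0$ satisfies the inequality and the count is $0$; otherwise $\#M(a,x) \ll x/a^3$, and in either case we obtain the bound $\ll x/a^3$. There is no real obstacle here: the case $S_{4,9}$ is the easiest of the twelve subdivisions of $S_4(a)$ precisely because $w_1$ and $w_2$ are pinned down to a single value, so no summation over $w_1$ or $w_2$ occurs and no logarithmic factor appears.
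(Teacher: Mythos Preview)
Your proof is correct and follows essentially the same approach as the paper: compute $t_1\geq\tfrac{1}{2}$, $t_2\geq\tfrac{1}{2a}$, $t_3\geq w_0+\tfrac{1}{2}$ from \Cref{lem:S4t1t2}, apply the inequality $a^4t_1t_2t_3<x$ from \Cref{lem:Estimates}, and bound the resulting one-parameter set of admissible $w_0$ by $\ll x/a^3$. The only cosmetic difference is that the paper writes the explicit constant $a^3(w_0+\tfrac12)<4x$ rather than using $\ll$.
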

\begin{proof}
Let $ \alpha \in Q(a, S_{4,9}(a)) $, let $ t_1, t_2, t_3 \in \R_{>0} $ be such that $ \alpha = t_1+t_2\rho^2+t_3(\rho')^{-2} $, and let $ \tau(\alpha) = (m, -w, o) $, where $ (m, w, o) \in \Phi(a, S_{4,9}(a)) $. Since $ w_1 = a+2 $ and $ w_2 = 0 $, by \Cref{lem:S4t1t2},
\begin{align*}
	t_1& = \frac{(a+1)w_2-(a+2)w_1}{a^2+3a+3}+2 = 2-\frac{(a+2)^2}{a^2+3a+3} \geq \frac{1}{2},\\
	t_2& = \frac{w_1}{a^2+3a+3} = \frac{a+2}{a^2+3a+3} \geq \frac{1}{2a}.
\end{align*}
We also have
\[
	t_3 = \frac{w}{a^2+3a+3} = w_0+\frac{(a+1)w_1}{a^2+3a+3}+\frac{w_2}{a^2+3a+3} \geq w_0+\frac{(a+1)(a+2)}{a^2+3a+3} \geq w_0+\frac{1}{2}.
\]

If $ \Nm(\alpha) \leq x $, then by \Cref{lem:Estimates}, $ a^4t_1t_2t_3 < x $, hence $ a^3\left(w_0+\frac{1}{2}\right) < 4x $. Let
\[
	M(a, x) := \left\{(w_0, w_1, w_2) \in \Z_{\geq 0}^3 : w_0 \geq 0, w_1 = a+2, w_2 = 0, a^3\left(w_0+\frac{1}{2}\right) < 4x\right\}.
\]
We have
\[
	\#\left\{\alpha \in Q(a, S_{4,9}(a)) : \Nm(\alpha) \leq x\right\} \leq \#M(a, x) \ll \frac{x}{a^3}.\qedhere
\]
\end{proof}

\begin{lemma}
\label{lem:S410}
Let $ a \in \Z $, $ a \geq 8 $. If $ x \geq 1 $, then
\[
	\#\left\{\alpha \in Q(a, S_{4,10}(a)) : \Nm(\alpha) \leq x\right\} \ll \frac{x}{a^2}(\log a)+\left(\frac{x}{a}\right)^{1/2}.
\]
\end{lemma}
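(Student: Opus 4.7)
The plan is to follow the same template as the preceding lemmas in this subsection, but with a slightly sharper use of the inequalities from \Cref{lem:Estimates} at one boundary case. Let $ \alpha \in Q(a, S_{4,10}(a)) $ with $ \alpha = t_1+t_2\rho^2+t_3(\rho')^{-2} $ and $ \tau(\alpha) = (m, -w, o) $. Since $ m_1 = o_1 = 1 $, $ 1 \leq w_1 \leq a+1 $, and $ w_2 = 0 $, \Cref{lem:S4t1t2} (middle branch for $ t_1 $, last branch for $ t_2 $) gives
\[
	t_1 = 1-\frac{(a+2)w_1}{a^2+3a+3},\qquad t_2 = \frac{w_1}{a^2+3a+3},
\]
while $ t_3 = w_0+\frac{(a+1)w_1}{a^2+3a+3} \geq w_0 $. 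I would then split the counting according to the size of $ w_1 $.

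If $ w_1 \leq a/2 $, then $ t_1 \geq 1/2 $ and $ t_2 \geq w_1/(2a^2) $, so applying $ a^4t_1t_2t_3 < x $ from \Cref{lem:Estimates} yields $ a^2 w_0 w_1 \ll x $. Summing over $ w_0 \geq 1 $ and $ 1 \leq w_1 \leq a/2 $ produces a bound of $ \ll \frac{x}{a^2}(\log a) $, exactly as in \Cref{lem:S47,lem:S48}. If $ a/2 < w_1 \leq a $, I substitute $ z_1 := (a+1)-w_1 \geq 1 $ (so $ 1 \leq z_1 < a/2+1 $); a short computation gives $ t_1 = \frac{1+(a+2)z_1}{a^2+3a+3} \geq \frac{z_1}{2a} $ and $ t_2 \geq \frac{1}{4a} $ (from $ w_1 > a/2 $). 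Using $ a^4t_1t_2t_3 < x $ once more gives $ a^2 z_1 w_0 \ll x $, whose contribution is $ \ll \frac{x}{a^2}(\log a) $.

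The main obstacle is the remaining corner case $ w_1 = a+1 $ (that is, $ z_1 = 0 $). Here $ t_1 = \frac{1}{a^2+3a+3} $ is only of size $ 1/a^2 $, so the inequality $ a^4 t_1 t_2 t_3 < x $ degenerates and merely bounds $ w_0 $ by a multiple of $ x/a $, which is far too weak. To recover the claimed exponent $ 1/2 $, I would abandon that inequality and instead invoke the bound $ a^2 t_2 t_3^2 < 2x $ from \Cref{lem:Estimates}. Since $ t_2 = \frac{a+1}{a^2+3a+3} \geq \frac{1}{a+3} \gg \frac{1}{a} $, this gives $ t_3^2 \ll x/a $, hence $ w_0 \leq t_3 \ll (x/a)^{1/2} $. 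With only one value of $ w_1 $ and $ w_2 $, this sub-case contributes $ \ll (x/a)^{1/2} $.

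Adding the three contributions yields $ \#\{\alpha \in Q(a,S_{4,10}(a)) : \Nm(\alpha) \leq x\} \ll \frac{x}{a^2}(\log a)+(x/a)^{1/2} $, as desired. The only non-routine step is recognizing that, at the degenerate point $ w_1 = a+1 $, the correct replacement for $ a^4t_1t_2t_3 < x $ is the ``lopsided'' inequality $ a^2t_2t_3^2 < 2x $, which exploits the smallness of $ t_1 $ by trading it away and instead paying with an extra factor of $ t_3 $.
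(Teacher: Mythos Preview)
Your proposal is correct and follows essentially the same approach as the paper's proof: split according to $w_1 \leq a/2$, $a/2 < w_1 \leq a$ (equivalently $1 \leq z_1 < a/2+1$ with $z_1 = a+1-w_1$), and the boundary value $w_1 = a+1$; in the first two ranges use $a^4t_1t_2t_3 < x$ to obtain $\ll \frac{x}{a^2}(\log a)$, and in the last use $a^2t_2t_3^2 < 2x$ to obtain $\ll (x/a)^{1/2}$. The paper organizes the three cases slightly differently (it first splits at $w_1 = a/2$ and then, within $w_1 > a/2$, separates $w_1 = a+1$ from $w_1 < a+1$), but the substance is identical.
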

\begin{proof}
Let $ \alpha \in Q(a, S_{4,10}(a)) $, let $ t_1, t_2, t_3 \in \R_{>0} $ be such that $ \alpha = t_1+t_2\rho^2+t_3(\rho')^{-2} $, and let $ \tau(\alpha) = (m, -w, o) $, where $ (m, w, o) \in \Phi(a, S_{4,10}(a)) $. Since $ 1 \leq w_1 \leq a+1 $ and $ w_2 = 0 $, by \Cref{lem:S4t1t2},
\begin{align*}
	t_1& = \frac{(a+1)w_2-(a+2)w_1}{a^2+3a+3}+1 = 1-\frac{(a+2)w_1}{a^2+3a+3},\\
	t_2& = \frac{w_1}{a^2+3a+3} \geq \frac{w_1}{2a^2}.
\end{align*}
We also have
\[
	t_3 = \frac{w}{a^2+3a+3} = w_0+\frac{(a+1)w_1}{a^2+3a+3}+\frac{w_2}{a^2+3a+3} \geq w_0.
\]

We consider two cases. First, assume $ w_1 \leq \frac{a}{2} $. Then
\[
	t_1 = 1-\frac{(a+2)w_1}{a^2+3a+3} \geq 1-\frac{(a+2)a}{2(a^2+3a+3)} \geq \frac{1}{2}.
\]

If $ \Nm(\alpha) \leq x $, then by \Cref{lem:Estimates}, $ a^4t_1t_2t_3 < x $, hence $ a^2w_0w_1 < 4x $. If we let
\[
	M_1(a, x) := \left\{(w_0, w_1, w_2) \in \Z_{\geq 0}^3 : w_0 \geq 1, 1 \leq w_1 \leq \frac{a}{2}, w_2 = 0, a^2w_0w_1 < 4x\right\},
\]
then
\[
	\#M_1(a, x) \leq \sum_{1 \leq w_1 \leq \frac{a}{2}}\frac{4x}{a^2w_1} \ll \frac{x}{a^2}\sum_{1 \leq w_1 \leq \frac{a}{2}}\frac{1}{w_1} \ll \frac{x}{a^2}(\log a).
\]

Secondly, assume $ w_1 > \frac{a}{2} $. Then
\[
	t_2 \geq \frac{w_1}{2a^2} > \frac{1}{4a}.
\]
We further distinguish two cases. First, let $ w_1 = a+1 $. If $ \Nm(\alpha) \leq x $, then $ a^2t_2t_3^2 < 2x $ by \Cref{lem:Estimates}, hence $ aw_0^2 < 8x $. If we let
\[
	M_2(a, x) := \left\{(w_0, w_1, w_2) \in \Z_{\geq 0}^3 : w_0 \geq 1, w_1 = a+1, w_2 = 0, aw_0^2 < 8x\right\},
\]
then $ \#M_2(a, x) \ll \left(\frac{x}{a}\right)^{1/2} $.

If $ w_1 < a+1 $, then we let $ w_1 = (a+1)-z_1 $, where $ 1 \leq z_1 < \frac{a}{2}+1 $. Then
\[
	t_1 = 1-\frac{(a+2)((a+1)-z_1)}{a^2+3a+3} = 1-\frac{(a+2)(a+1)}{a^2+3a+3}+\frac{(a+2)z_1}{a^2+3a+3} \geq \frac{(a+2)z_1}{a^2+3a+3} \geq \frac{z_1}{2a}.
\]

If $ \Nm(\alpha) \leq x $, then by \Cref{lem:Estimates}, $ a^4t_1t_2t_3 < x $, hence $ a^2w_0z_1 < 8x $. If we let
\[
	M_3(a, x) := \left\{(w_0, w_1, w_2) \in \Z_{\geq 0}^3 : w_0 \geq 1, w_1 = (a+1)-z_1, 1 \leq z_1 < \frac{a}{2}+1, w_2 = 0, a^2w_0z_1 < 8x\right\},
\]
then
\[
	\#M_3(a, x) \leq \sum_{1 \leq z_1 < \frac{a}{2}+1}\frac{8x}{a^2z_1} \ll \frac{x}{a^2}\sum_{1 \leq z_1 < \frac{a}{2}+1}\frac{1}{z_1} \ll \frac{x}{a^2}(\log a).
\]

Finally,
\[
	\#\left\{\alpha \in Q(a, S_{4,10}(a)) : \Nm(\alpha) \leq x\right\} \leq \#M_1(a, x)+\#M_2(a, x)+\#M_3(a, x) \ll \frac{x}{a^2}(\log a)+\left(\frac{x}{a}\right)^{1/2}.\qedhere
\]
\end{proof}

\begin{lemma}
\label{lem:S411}
Let $ a \in \Z $, $ a \geq 8 $. If $ x \geq 1 $, then
\[
	\#\left\{\alpha \in Q(a, S_{4,11}(a)) : \Nm(\alpha) \leq x\right\} \ll \left(\frac{x}{a}\right)^{1/2}+\frac{x}{a^2}.
\]
\end{lemma}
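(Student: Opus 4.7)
The plan is to follow the template of \Cref{lem:S410} almost verbatim; the simplification here is that $w_0 = 0$ removes one degree of freedom. First I would apply \Cref{lem:S4t1t2} with $m_1 = o_1 = 1$, $w_0 = 0$, $w_2 = 0$ and $1 \leq w_1 \leq a+1$ to obtain the explicit expressions
\[
	t_1 = 1-\frac{(a+2)w_1}{a^2+3a+3},\qquad t_2 = \frac{w_1}{a^2+3a+3} \geq \frac{w_1}{2a^2},\qquad t_3 = \frac{(a+1)w_1}{a^2+3a+3} \geq \frac{w_1}{2a}.
\]
Since each $t_i$ depends only on $w_1$, counting admissible $\alpha$ reduces to counting admissible values of $w_1$, and I would split the range into small ($1 \leq w_1 \leq a/2$), near-maximal ($a/2 < w_1 \leq a$), and maximal ($w_1 = a+1$) regimes.

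For $1 \leq w_1 \leq a/2$, the inequality $t_1 \geq 1/2$ is immediate, and combined with $a^4 t_1 t_2 t_3 < x$ from \Cref{lem:Estimates} this yields $aw_1^2 < 8x$, so $w_1 \leq (8x/a)^{1/2}$ and the contribution is $\ll (x/a)^{1/2}$. The boundary value $w_1 = a+1$ corresponds to a single triple $(w_0, w_1, w_2) = (0, a+1, 0)$ and contributes at most $1$; a direct estimate of the three conjugates via \Cref{lem:Units} shows that the corresponding $\Nm(\alpha)$ is $\gg a$, so this element is only ever counted when $x \gg a$, at which point $(x/a)^{1/2} \gg 1$ already absorbs the $O(1)$.

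In the intermediate range $a/2 < w_1 \leq a$, I would write $w_1 = (a+1)-z_1$ with $1 \leq z_1 < a/2+1$ and compute
\[
	t_1 = \frac{1+(a+2)z_1}{a^2+3a+3} \geq \frac{z_1}{2a},
\]
while $w_1 > a/2$ together with $a \geq 8$ yields constant lower bounds $t_2 > 1/(4a)$ and $t_3 > 1/3$. Applying $a^4 t_1 t_2 t_3 < x$ from \Cref{lem:Estimates} then forces $a^2 z_1 < 24x$, so this regime contributes $\ll x/a^2$. Summing the three cases produces $\ll (x/a)^{1/2}+x/a^2$. The only bookkeeping required is verifying elementary inequalities such as $a^2 \geq 3a+3$ and $a^2 \geq 3a+6$ to validate the constant lower bounds on $t_2$ and $t_3$; these hold for $a \geq 8$, and so I do not foresee any genuine obstacle, as this is a simpler variant of the preceding lemma.
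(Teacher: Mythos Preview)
Your proposal is correct and follows essentially the same approach as the paper: both split into the regimes $w_1 \leq a/2$, $a/2 < w_1 < a+1$ (with the substitution $w_1 = (a+1)-z_1$), and $w_1 = a+1$, and both use the key inequality $a^4 t_1 t_2 t_3 < x$ from \Cref{lem:Estimates} to bound each piece. The only cosmetic differences are that the paper handles the boundary value $w_1=a+1$ via the same $a^4t_1t_2t_3<x$ bound (yielding $a<32x$) rather than a direct conjugate estimate, and uses the slightly weaker $t_3>1/4$ in the intermediate range, leading to the constant $32$ instead of your $24$.
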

\begin{proof}
Let $ \alpha \in Q(a, S_{4,11}(a)) $, let $ t_1, t_2, t_3 \in \R_{>0} $ be such that $ \alpha = t_1+t_2\rho^2+t_3(\rho')^{-2} $, and let $ \tau(\alpha) = (m, -w, o) $, where $ (m, w, o) \in \Phi(a, S_{4,11}(a)) $. Since $ 1 \leq w_1 \leq a+1 $ and $ w_2 = 0 $, by \Cref{lem:S4t1t2},
\begin{align*}
	t_1& = \frac{(a+1)w_2-(a+2)w_1}{a^2+3a+3}+1 = 1-\frac{(a+2)w_1}{a^2+3a+3},\\
	t_2& = \frac{w_1}{a^2+3a+3} \geq \frac{w_1}{2a^2}.
\end{align*}
We also have
\[
	t_3 = \frac{w}{a^2+3a+3} = w_0+\frac{(a+1)w_1}{a^2+3a+3}+\frac{w_2}{a^2+3a+3} \geq \frac{(a+1)w_1}{a^2+3a+3} \geq \frac{w_1}{2a}.
\]

We consider two cases. First, assume $ w_1 \leq \frac{a}{2} $. Then
\[
	t_1 = 1-\frac{(a+2)w_1}{a^2+3a+3} \geq 1-\frac{(a+2)a}{2(a^2+3a+3)} \geq \frac{1}{2}.
\]

If $ \Nm(\alpha) \leq x $, then by \Cref{lem:Estimates}, $ a^4t_1t_2t_3 < x $, hence $ aw_1^2 < 8x $. If we let
\[
	M_1(a, x) := \left\{(w_0, w_1, w_2) \in \Z_{\geq 0}^3 : w_0 = 0, 1\leq w_1 \leq \frac{a}{2}, w_2 = 0, aw_1^2 < 8x\right\},
\]
then $ \#M_1(a, x) \ll \left(\frac{x}{a}\right)^{1/2} $.

Secondly, assume $ w_1 > \frac{a}{2} $. Then $ t_2 \geq \frac{w_1}{2a^2} > \frac{1}{4a} $ and $ t_3 \geq \frac{w_1}{2a} > \frac{1}{4} $.

We further distinguish two cases. If $ w_1 = a+1 $, then
\[
	t_1 = 1-\frac{(a+2)(a+1)}{a^2+3a+3} = \frac{1}{a^2+3a+3} \geq \frac{1}{2a^2}.
\]
If $ \Nm(\alpha) \leq x $, then by \Cref{lem:Estimates}, $ a^4t_1t_2t_3 < x $, hence $ a<32x $ and $ 1 \ll \left(\frac{x}{a}\right)^{1/2} $. If we let
\[
	M_2(a,x) := \left\{(w_0,w_1,w_2)\in \Z_{\geq 0}^3 : w_0 = 0, w_1 = a+1, w_2 = 0, a<32x\right\},
\]
then $ \#M_2(a,x) $ equals $ 0 $ or $ 1 $, and in both cases $ \#M_2(a,x) \ll \left(\frac{x}{a}\right)^{1/2} $.

If $ w_1 < a+1 $, then we let $ w_1 = (a+1)-z_1 $, where $ 1 \leq z_1 < \frac{a}{2}+1 $. We have
\[
	t_1 = 1-\frac{(a+2)w_1}{a^2+3a+3} = 1-\frac{(a+2)((a+1)-z_1)}{a^2+3a+3} = \frac{(a+2)z_1}{a^2+3a+3} \geq \frac{z_1}{2a}.
\]

If $ \Nm(\alpha) \leq x $, then by \Cref{lem:Estimates}, $ a^4t_1t_2t_3 < x $, hence $ a^2z_1 < 32x $. If we let
\[
	M_3(a, x) := \left\{(w_0, w_1, w_2)\in \Z_{\geq 0}^3 : w_0 = 0, w_1 = (a+1)-z_1, 1 \leq z_1 < \frac{a}{2}+1, w_2 = 0, a^2z_1 < 32x\right\},
\]
then $ \#M_3(a, x) \ll \frac{x}{a^2} $.

Finally,
\[
	\#\left\{\alpha \in Q(a, S_{4,11}(a)) : \Nm(\alpha) \leq x\right\} \ll \#M_1(a, x)+\#M_2(a,x)+\#M_3(a,x) \ll \left(\frac{x}{a}\right)^{1/2}+\frac{x}{a^2}.\qedhere
\]
\end{proof}

\begin{lemma}
\label{lem:S412}
Let $ a \in \Z $, $ a \geq 8 $. If $ x \geq 1 $, then
\[
	\#\left\{\alpha \in Q(a, S_{4,12}(a)) : \Nm(\alpha) \leq x\right\} \ll \frac{x}{a^4}.
\]
\end{lemma}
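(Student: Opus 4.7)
The plan is to exploit the fact that this is the most degenerate of the twelve sub-cases: once $w_1 = 0$ and $w_2 = 0$ are imposed, both $t_1$ and $t_2$ collapse to constants and the only genuine free variable in the problem is $w_0$. Accordingly, I expect a bound substantially smaller than $x/a^2$, matching the stated $x/a^4$.

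Concretely, let $\alpha \in Q(a, S_{4,12}(a))$ and write $\alpha = t_1 + t_2\rho^2 + t_3(\rho')^{-2}$ with $\tau(\alpha) = (m,-w,o)$, where $w = (a^2+3a+3)w_0$ for some $w_0 \in \Z_{\geq 1}$ (since $w_1 = w_2 = 0$). Applying \Cref{lem:S4t1t2} in the branch ``$w_1 \leq a+1$ and $w_1 \geq w_2$'' (which holds trivially here with $w_1 = w_2 = 0$), I obtain
\[
t_1 = \frac{(a+1)w_2 - (a+2)w_1}{a^2+3a+3} + 1 = 1, \qquad t_2 = 1,
\]
and a direct computation gives
\[
t_3 = \frac{w}{a^2+3a+3} = w_0.
\]

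Now I invoke \Cref{lem:Estimates}: the assumption $\Nm(\alpha) \leq x$ implies $a^4 t_1 t_2 t_3 < x$, which reduces to $a^4 w_0 < x$. The count is then controlled by the trivial lattice estimate
\[
\#\left\{\alpha \in Q(a, S_{4,12}(a)) : \Nm(\alpha) \leq x\right\} \leq \#\{w_0 \in \Z_{\geq 1} : a^4 w_0 < x\} \ll \frac{x}{a^4}.
\]
There is essentially no obstacle in this case; the only verification needed is that the relevant branch of \Cref{lem:S4t1t2} is correctly read off, and that no additional hidden constraint forces a splitting. Because every $w_0 \geq 1$ genuinely corresponds to an $\alpha$ in the set (up to the upper bound on $w_0$), this estimate is also tight, which will be useful in \Cref{sec:LB}.
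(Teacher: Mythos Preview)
Your proof is correct and follows essentially the same argument as the paper: both use \Cref{lem:S4t1t2} to get $t_1 = t_2 = 1$, compute $t_3 = w_0$ (the paper writes $t_3 \geq w_0$, which is the same here), and apply \Cref{lem:Estimates} to obtain $a^4 w_0 < x$. Your final remark about tightness being ``useful in \Cref{sec:LB}'' is extraneous and not quite accurate---the lower bounds there come from $S_{2,5}(a)$ and $S_1(a)$, not $S_{4,12}(a)$---but this does not affect the proof itself.
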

\begin{proof}
Let $ \alpha \in Q(a, S_{4,12}(a)) $, let $ t_1, t_2, t_3 \in \R_{>0} $ be such that $ \alpha = t_1+t_2\rho^2+t_3(\rho')^{-2} $, and let $ \tau(\alpha) = (m, -w, o) $, where $ (m, w, o) \in \Phi(a, S_{4,12}(a)) $. Since $ w_1 = 0 $ and $ w_2 = 0 $, by \Cref{lem:S4t1t2},
\begin{align*}
	t_1& = \frac{(a+1)w_2-(a+2)w_1}{a^2+3a+3}+1 = 1,\\
	t_2& = 1.
\end{align*}
We also have
\[
	t_3 = \frac{w}{a^2+3a+3} = w_0+\frac{(a+1)w_1}{a^2+3a+3}+\frac{w_2}{a^2+3a+3} \geq w_0.
\]

If $ \Nm(\alpha) \leq x $, then by \Cref{lem:Estimates}, $ a^4t_1t_2t_3 < x $, hence $ a^4w_0 < x $. Thus,
\[
	\#\left\{\alpha \in Q(a, S_{4,12}(a)) : \Nm(\alpha) \leq x\right\} \ll \frac{x}{a^4}.\qedhere
\]
\end{proof}

\begin{prop}
\label{prop:S4}
Let $ a \in \Z $, $ a \geq 8 $. If $ x \geq 1 $, then
\[
	\#\left\{\alpha \in Q(a, S_4(a)) : \Nm(\alpha) \leq x\right\} \ll \frac{x}{a^2}(\log a)^2+\left(\frac{x}{a}\right)^{2/3}.
\]
\end{prop}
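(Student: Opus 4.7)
The plan is to follow the template of \Cref{prop:S1,prop:S2,prop:S3}: by construction, $S_4(a) = S_{4,1}(a) \sqcup \cdots \sqcup S_{4,12}(a)$, and the definition of $Q(a,\cdot)$ immediately yields the disjoint decomposition $Q(a, S_4(a)) = \bigsqcup_{i=1}^{12} Q(a, S_{4,i}(a))$. The quantity to be bounded is therefore the sum of the twelve counts already estimated in \Cref{lem:S41,lem:S42,lem:S43,lem:S44,lem:S45,lem:S46,lem:S47,lem:S48,lem:S49,lem:S410,lem:S411,lem:S412}, and it remains only to verify that this sum collapses into $\frac{x(\log a)^2}{a^2} + (x/a)^{2/3}$.

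Two leading terms carry the entire estimate. The contribution $\frac{x(\log a)^2}{a^2}$ appears in \Cref{lem:S45}, and for $a \geq 8$ it dominates every other piece of the form $\frac{x(\log a)^k}{a^j}$ with $j \geq 2$ and $k \leq 2$; this absorbs the ``polynomial'' portions of the remaining eleven lemmas, including $\frac{x \log a}{a^3}$ from \Cref{lem:S41}, $\frac{x}{a^3}$ from \Cref{lem:S49}, $\frac{x}{a^4}$ from \Cref{lem:S412}, and the various $\frac{x(\log a)^k}{a^2}$ with $k \in \{0,1\}$ that occur throughout \Cref{lem:S42,lem:S43,lem:S44,lem:S46,lem:S47,lem:S48,lem:S410,lem:S411}. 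The contribution $(x/a)^{2/3}$ appears in \Cref{lem:S43} and handles the $(x/a)^{1/2}$ pieces from \Cref{lem:S42,lem:S44,lem:S410,lem:S411}: in the range $x \geq a$ one has $(x/a)^{1/2} \leq (x/a)^{2/3}$ directly.

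The only mildly subtle point, and hence the main ``obstacle'', is the range $1 \leq x < a$, where $(x/a)^{1/2} > (x/a)^{2/3}$. Here I would exploit the fact that each lemma gives an explicit bound $\leq C(x/a)^{1/2}$ for some absolute constant $C$, and that each count is a non-negative integer: when $C(x/a)^{1/2} < 1$ the count must vanish and the claimed bound holds trivially, while when $C(x/a)^{1/2} \geq 1$ one has $x \geq a/C^2$, putting us in a regime where $(x/a)^{2/3}$ is bounded below by a positive constant and can absorb the $O(1)$ count. With this bookkeeping the twelve lemma bounds combine exactly into the statement of the proposition, and no computation beyond what has already been done in \Cref{sec:S4} is required.
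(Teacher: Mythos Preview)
Your approach is the same as the paper's: decompose $Q(a,S_4(a))$ into the twelve pieces and sum the bounds from \Cref{lem:S41,lem:S42,lem:S43,lem:S44,lem:S45,lem:S46,lem:S47,lem:S48,lem:S49,lem:S410,lem:S411,lem:S412}. The paper's proof is a one-liner that simply cites these lemmas, whereas you spell out why the $(x/a)^{1/2}$ contributions are absorbed by $(x/a)^{2/3}$ even in the range $x<a$; your integrality argument for that point is correct and is a detail the paper leaves implicit.
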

\begin{proof}
We have $ S_4(a) = S_{4,1}(a)\sqcup S_{4,2}(a)\sqcup \dots \sqcup S_{4,12}(a) $, hence
\[
	Q(a, S_4(a)) = Q(a, S_{4,1}(a))\sqcup Q(a, S_{4,2}(a))\sqcup \dots\sqcup Q(a, S_{4,12}(a)).
\]
The proposition now follows from \Crefrange{lem:S41}{lem:S412}.
\end{proof}

\subsection{Proof of the upper bound}

We recall from \Cref{sec:Setup} that
\[
	P(a,x) = \left\{\alpha \in \O_{K_a}^+ : \Nm(\alpha) \leq x, \tau(\alpha) \in \calQ\right\},
\]
where
\[
	\calQ = \calC(1, \rho^2, (\rho'')^{-2})\sqcup \calC(1, \rho^2, (\rho')^{-2})\sqcup \calC(1, \rho^2)\sqcup \calC(1, (\rho'')^{-2})\sqcup \calC(1, (\rho')^{-2})\sqcup \calC(1).
\]

\begin{prop}
\label{prop:C3}
Let $ a \in \Z $, $ a \geq 8 $ and $ r > 0 $. If $ 1 \leq x \leq a^r $, then
\[
	P\left(a,x,\calC(1,\rho^2,(\rho')^{-2})\right) \ll_r \frac{x}{a^2}(\log a)^2+\left(\frac{x}{a}\right)^{2/3}.
\]
The constant implied in the estimate depends only on $ r $.
\end{prop}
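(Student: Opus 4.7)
The plan is to assemble the four propositions from \Cref{sec:S1}--\Cref{sec:S4} into the desired bound. Following the setup of \Cref{sec:Setup}, for each $\alpha$ counted in $P(a, x, \calC)$ I write $\tau(\alpha) = (m, -w, o)$ with $w \in \Z_{\geq 1}$ and decompose $m = m_0(a, w) + m_1$, $o = o_0(a, w) + o_1$. By \Cref{lem:t1t2}, the positivity of $t_1$ and $t_2$ forces $m_1, o_1 \in \Z_{\geq 1}$.

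Next I would partition the counted elements according to the four cases $(m_1 \geq 2, o_1 \geq 2)$, $(m_1 \geq 2, o_1 = 1)$, $(m_1 = 1, o_1 \geq 2)$, $(m_1 = 1, o_1 = 1)$. Since $\phi_a$ and $\psi_a$ are both bijections from $\Z_{\geq 1}$ onto $W_1(a)$ and $W_2(a)$ respectively, either parametrization of $w$ is equally valid, and the four classes correspond precisely to $Q(a, S_1(a))$, $Q(a, S_2(a))$, $R(a, S_3(a))$, and $Q(a, S_4(a))$. (Using $\psi_a$ in the third case is merely a convenience that yields cleaner formulas for $t_1$ when $m_1 = 1$, as exploited in \Cref{lem:S3t1t2}.) This gives the disjoint decomposition
\[
P(a, x, \calC) = \sum_{i \in \{1,2,4\}} \#\{\alpha \in Q(a, S_i(a)) : \Nm(\alpha) \leq x\} + \#\{\alpha \in R(a, S_3(a)) : \Nm(\alpha) \leq x\}.
\]

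The proof concludes by invoking \Cref{prop:S1,prop:S2,prop:S3,prop:S4}, each of which contributes a term $\ll_r \frac{x}{a^2}(\log a)^2 + (x/a)^{2/3}$ or smaller; summing yields the claim. The only conceptual point requiring care is verifying that the four sets form a genuine partition of $\{\alpha \in \O_{K_a}^+ : \tau(\alpha) \in \calC\}$, which is immediate because $m_0(a,w)$ and $o_0(a,w)$ depend only on $a$ and $w$, so that $m_1$ and $o_1$ are determined by $\tau(\alpha)$ alone, independently of the choice of $\phi_a$ versus $\psi_a$. I do not expect any new obstacle at this stage: the technical heart of the argument—the case analyses on the sizes and signs of $w_0, w_1, w_2$, each reduced to a lattice-point count via \Cref{lem:Estimates}—has already been carried out in the preceding subsections, and the present proposition is an immediate corollary.
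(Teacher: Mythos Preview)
Your proposal is correct and follows essentially the same approach as the paper: you decompose $\{\alpha \in \O_{K_a}^+ : \tau(\alpha) \in \calC\}$ into the disjoint union $Q(a,S_1(a)) \sqcup Q(a,S_2(a)) \sqcup R(a,S_3(a)) \sqcup Q(a,S_4(a))$ according to the four cases for $(m_1,o_1)$, and then invoke \Cref{prop:S1,prop:S2,prop:S3,prop:S4}. Your added remarks justifying $m_1,o_1 \geq 1$ via \Cref{lem:t1t2} and explaining why the choice of $\phi_a$ versus $\psi_a$ does not affect the partition are welcome clarifications but not strictly necessary.
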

\begin{proof}
If we let $ \calC = \calC(1,\rho^2,(\rho')^{-2}) $, then
\[
	\left\{\alpha \in \O_{K_a}^+:\tau(\alpha) \in \calC\right\} = Q(a,S_1(a))\sqcup Q(a,S_2(a))\sqcup R(a,S_3(a))\sqcup Q(a,S_4(a)).	
\]
By \Cref{prop:S1}, \Cref{prop:S2}, \Cref{prop:S3}, and \Cref{prop:S4},
\begin{align*}
	\#\left\{\alpha \in Q(a,S_1(a)):\Nm(\alpha)\leq x\right\}& \ll_r \frac{x}{a^2}(\log a)^2,\\
	\#\left\{\alpha \in Q(a,S_2(a)):\Nm(\alpha)\leq x\right\}& \ll_r \frac{x}{a^2}(\log a)^2+\left(\frac{x}{a}\right)^{2/3},\\
	\#\left\{\alpha \in R(a,S_3(a)):\Nm(\alpha)\leq x\right\}& \ll_r \frac{x}{a^2}(\log a)^2+\left(\frac{x}{a}\right)^{2/3},\\
	\#\left\{\alpha \in Q(a,S_4(a)):\Nm(\alpha)\leq x\right\}& \ll_r \frac{x}{a^2}(\log a)^2+\left(\frac{x}{a}\right)^{2/3}.
\end{align*}
The result follows.
\end{proof}

\begin{theorem}
\label{thm:UB}
Let $ K_a = \Q(\rho) $ be a simplest cubic field, where $ \rho $ is the largest root of $ f_a $. Assume that $ \O_{K_a} = \Z[\rho] $. If $ a \geq 8 $ and $ x \geq 1 $, then
\[
	P(a, x) \ll x^{1/3}+\left(\frac{x}{a}\right)^{2/3}+\frac{(\log a)^2x}{a^2}
\]
and
\[
	P_p(a,x) \ll 1+\left(\frac{x}{a}\right)^{2/3}+\frac{(\log a)^2x}{a^2}.
\]
\end{theorem}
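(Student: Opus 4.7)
My approach is to combine the cone-by-cone upper bounds proved earlier in this section with the a priori estimate of \Cref{thm:Apr}. Recall from \Cref{sec:Setup} that $P(a,x) = \sum_{\mathcal{C}} P(a,x,\mathcal{C})$, where the sum runs over the six simplicial cones comprising $\calQ$. The cones $\calC(1)$, $\calC(1,\rho^2)$, $\calC(1,(\rho'')^{-2})$, $\calC(1,(\rho')^{-2})$ are handled unconditionally by \Cref{lem:C1}, whereas \Cref{lem:C2} and \Cref{prop:C3} treat the two three-dimensional cones $\calC(1,\rho^2,(\rho'')^{-2})$ and $\calC(1,\rho^2,(\rho')^{-2})$ only under the assumption $1 \leq x \leq a^r$, with implied constant depending on $r$. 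The plan is to choose $r$ once and for all so that these bounds already cover the range in which \Cref{thm:Apr} fails to be sharp.

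Concretely, I would fix an absolute constant $r$ large enough that $a^r \geq c_1^3 a^6 (\log a)^4$ for every $a \geq 8$; this is possible because $c_1$ is the absolute constant from \Cref{thm:Apr} and $(\log a)^4$ is dominated by any positive power of $a$. For $1 \leq x \leq a^r$, summing the bounds from \Cref{lem:C1,lem:C2,prop:C3} (using $(x/a^2)^{2/3} \leq (x/a)^{2/3}$ to absorb the contribution of the two-dimensional cones) yields $P(a,x) \ll x^{1/3} + (x/a)^{2/3} + (\log a)^2 x/a^2$. For $x > a^r$, the choice of $r$ ensures $x \geq c_1^3 a^6 (\log a)^4$, whence \Cref{thm:Apr} directly gives $P(a,x) \ll (\log a)^2 x/a^2$. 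Together these two ranges cover all $x \geq 1$ and establish the first claim.

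For the primitive counting function $P_p(a,x)$, the only modification is at the cone $\calC(1)$: elements $\alpha$ with $\tau(\alpha) \in \calC(1)$ are the positive rational integers $\alpha = m$, and the ideal $(m)$ is primitive if and only if $m = 1$, so $P_p(a,x,\calC(1)) = 1$ replaces the $x^{1/3}$ contribution. For every other cone the trivial inequality $P_p(a,x,\mathcal{C}) \leq P(a,x,\mathcal{C})$ suffices; in the large-$x$ regime we similarly use $P_p(a,x) \leq P(a,x)$ together with \Cref{thm:Apr}. I do not expect any real obstacle here, since all of the substantive analytic work is already encoded in \Cref{lem:C1,lem:C2,prop:C3} and \Cref{thm:Apr}; the proof amounts to organizing these pieces and observing that the three asserted terms dominate in their respective regimes.
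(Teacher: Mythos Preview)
Your proposal is correct and matches the paper's proof essentially line for line: the paper likewise invokes \Cref{thm:Apr} to dispose of the range $x > c a^6(\log a)^4$, then in the complementary range sums the cone contributions from \Cref{lem:C1}, \Cref{lem:C2}, and \Cref{prop:C3} (absorbing $(x/a^2)^{2/3}$ into $(x/a)^{2/3}$), and treats $P_p$ by observing that the only primitive element in $\calC(1)$ is $\alpha=1$. Your explicit choice of a fixed $r$ with $a^r \geq c_1^3 a^6(\log a)^4$ is exactly the implicit step the paper glosses over when applying \Cref{lem:C2} and \Cref{prop:C3} under the bounded-$x$ assumption.
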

\begin{proof}
We have $ P_p(a,x) \leq P(a,x) $. From \Cref{thm:Apr}, we know that there exists a constant $ c > 0 $ such that if $ x > ca^6(\log a)^4 $, then
\[
	P(a,x) \ll \frac{(\log a)^2x}{a^2}.
\]
Thus, we may assume $ x \leq c a^6(\log a)^4 $ for the rest of the proof.

By \Cref{lem:C1},
\[
	P(a,x,\calC(1)) \ll x^{1/3}
\]
and if $ \calC \in \left\{\calC\left(1,\rho^2\right),\calC\left(1,(\rho'')^{-2}\right),\calC\left(1,(\rho')^{-2}\right)\right\} $, then
\[
	P(a,x,\calC) \ll \left(\frac{x}{a^2}\right)^{2/3} \leq \left(\frac{x}{a}\right)^{2/3}.
\]
Moreover, the only primitive element in $ \calC(1) $ is $ \alpha = 1 $, hence
\[
	P_p(a,x,\calC(1)) \ll 1.
\]

By \Cref{lem:C2},
\[
	P\left(a,x,\calC\left(1,\rho^2,(\rho'')^{-2}\right)\right) \ll \frac{(\log a)^2 x}{a^2},
\]
and by \Cref{prop:C3},
\[
	P\left(a,x,\calC\left(1,\rho^2,(\rho')^{-2}\right)\right) \ll \frac{(\log a)^2x}{a^2}+\left(\frac{x}{a}\right)^{2/3}.
\]
The theorem follows by summing the above estimates.
\end{proof}

\section{Lower bound and proof of the main theorem}
\label{sec:LB}

In this section, we prove a lower bound for the number of primitive principal ideals $ P_p(a,x) $. For an element $ \alpha \in \O_{K_a} $ expressed in the basis $ (1,\rho,\rho^2) $ as $ \alpha = m+n\rho+o\rho^2 $, i.e., $ \tau(\alpha) = (m,n,o) $, we have that $ \alpha $ is primitive if and only if $ \gcd(m,n,o) = 1 $. Let us introduce the notation
\[
	\Prim_a := \left\{\alpha \in \O_{K_a} : n \nmid \alpha, n \in \Z_{\geq 2}\right\}
\]
for the set of primitive integral elements in $ K_a $.

\begin{lemma}
\label{lem:NU}
Let $ a \in \Z $, $ a \geq 8 $. Let $ t_1, t_2, t_3 \in \R_{>0} $ and $ \alpha = t_1+t_2\rho^2+t_3(\rho')^{-2} $. We have
\[
	\Nm(\alpha) < \left(t_1+2a^2t_2+t_3\right)\left(t_1+2t_2+2a^2t_3\right)\left(t_1+\frac{t_2}{a^2}+\frac{t_3}{a^2}\right).
\]
\end{lemma}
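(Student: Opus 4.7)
The plan is to apply the estimates from \Cref{lem:Units} directly, embedding by embedding. Since $K_a/\Q$ is Galois with cyclic Galois group of order $3$, the three real embeddings cyclically permute the roots $\rho, \rho', \rho''$ of $f_a$. Taking the embedding that sends $\rho \mapsto \rho'$ (so $\rho' \mapsto \rho''$ and $\rho'' \mapsto \rho$), and the one that sends $\rho \mapsto \rho''$ (so $\rho' \mapsto \rho$ and $\rho'' \mapsto \rho'$), the three conjugates of $\alpha = t_1 + t_2\rho^2 + t_3(\rho')^{-2}$ are
\[
\alpha = t_1 + t_2\rho^2 + t_3(\rho')^{-2},\quad \alpha' = t_1 + t_2(\rho')^2 + t_3(\rho'')^{-2},\quad \alpha'' = t_1 + t_2(\rho'')^2 + t_3\rho^{-2}.
\]
(These formulas are already used repeatedly in \Cref{sec:Count,sec:UB}, so I would simply cite that setup.)

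From \Cref{lem:Units} (valid for $a \geq 8$), the upper bounds $\rho^2 < 2a^2$, $(\rho')^2 < 2$, $(\rho'')^2 < 1/a^2$ give, by inversion, $(\rho')^{-2} < 1$, $(\rho'')^{-2} < 2a^2$, and $\rho^{-2} < 1/a^2$. Since $t_1, t_2, t_3 > 0$, term-by-term substitution yields
\[
\alpha < t_1 + 2a^2 t_2 + t_3,\qquad \alpha' < t_1 + 2t_2 + 2a^2 t_3,\qquad \alpha'' < t_1 + \frac{t_2}{a^2} + \frac{t_3}{a^2}.
\]
Each of $\alpha, \alpha', \alpha''$ is positive (as a sum of positive reals), and each bound on the right is likewise positive, so multiplying the three strict inequalities preserves the strict inequality:
\[
\Nm(\alpha) = \alpha\alpha'\alpha'' < \left(t_1 + 2a^2 t_2 + t_3\right)\left(t_1 + 2 t_2 + 2a^2 t_3\right)\left(t_1 + \tfrac{t_2}{a^2} + \tfrac{t_3}{a^2}\right).
\]
This gives the claim.

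The proof is genuinely elementary; the only thing to be careful about is choosing the correct pairing of unit estimates with each embedding (making sure one tracks which conjugate of $\rho$ appears after conjugation) and observing that the positivity of all nine summands is what lets us multiply the three inequalities without any sign subtleties. There is no substantive obstacle.
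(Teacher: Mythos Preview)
Your proof is correct and follows essentially the same approach as the paper: both write $\Nm(\alpha)=\alpha\alpha'\alpha''$ with the same three conjugate expressions, invoke the bounds $\rho^2<2a^2$, $(\rho')^2<2$, $(\rho'')^2<1/a^2$ from \Cref{lem:Units} (together with their reciprocals), and multiply the resulting strict inequalities using positivity. Your write-up is just slightly more explicit about the Galois action and the sign justification.
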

\begin{proof}
The norm of $ \alpha $ equals
\[
	\Nm(\alpha) = \left(t_1+t_2\rho^2+t_3(\rho')^{-2}\right)\left(t_1+t_2(\rho')^2+t_3(\rho'')^{-2}\right)\left(t_1+t_2(\rho'')^2+t_3\rho^{-2}\right).
\]
By \Cref{lem:Units}, the sizes of the units can be estimated as $ \rho^2 < 2a^2 $, $ (\rho')^{-2} < 1 $, $ (\rho')^2 < 2 $, $ (\rho'')^{-2} < 2a^2 $, $ (\rho'')^2 < \frac{1}{a^2} $, and $ \rho^{-2} < \frac{1}{a^2} $. The inequality follows.
\end{proof}

As \Cref{lem:S25} is the first case where we obtained the upper bound $ \left(\frac{x}{a}\right)^{2/3} $, we will now prove that it is the correct order of magnitude. We recall that
\[
	S_2(a) := \left\{(m_1,o_1,w_0,w_1,w_2)\in \Z^2\times W_1(a) : m_1 \geq 2, o_1 = 1\right\}
\]
and
\[
	S_{2,5}(a) := \left\{(m_1,o_1,w_0,w_1,w_2) \in S_2(a) : w_0 = 0, w_1 \geq 1, w_2 = 0\right\}.
\]

\begin{lemma}
\label{lem:S25L}
Let $ a \in \Z $, $ a \geq 8 $. If $ x \geq 5^3a $, then
\[
	\#\left\{\alpha \in Q(a, S_{2,5}(a)) \cap \Prim_a : \Nm(\alpha) \leq x\right\} \gg \left(\frac{x}{a}\right)^{2/3}.
\]
\end{lemma}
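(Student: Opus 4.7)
The plan is to prove the bound by exhibiting an explicit family of primitive elements of $ Q(a, S_{2,5}(a)) $ indexed by coprime integer pairs. To each $ (m_1, w_1) \in \Z^2 $ with $ m_1 \geq 2 $ and $ 1 \leq w_1 \leq a+2 $ I associate $ w := (a+1)w_1 $, observe that $ \phi_a(w) = (0, w_1, 0) $, and invoke \Cref{lem:W1} and \Cref{lem:W2} to get $ m_0(a,w) = -w_1 $ and $ o_0(a,w) = w_1 - 1 $. Defining $ \alpha \in \O_{K_a} $ by $ \tau(\alpha) = (m_1 - w_1,\, -(a+1)w_1,\, w_1) $ then places $ (m_1, 1, 0, w_1, 0) $ into $ S_{2,5}(a) $. \Cref{lem:t1t2} and \Cref{lem:Estimatet2} produce the explicit barycentric coordinates
\[
	t_1 = m_1 - \tfrac{(a+2)w_1}{a^2+3a+3}, \qquad t_2 = \tfrac{w_1}{a^2+3a+3}, \qquad t_3 = \tfrac{(a+1)w_1}{a^2+3a+3},
\]
each strictly positive for $ a \geq 8 $ since $ \tfrac{(a+2)^2}{a^2+3a+3} < 2 \leq m_1 $, so $ \alpha \in Q(a, S_{2,5}(a)) $.

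Next I would bound $ \Nm(\alpha) $ from above using \Cref{lem:NU}. A direct substitution and collection of $ w_1 $-coefficients gives the three factors
\[
	m_1 + \tfrac{(2a^2-1)w_1}{a^2+3a+3}, \qquad m_1 + \tfrac{(2a^3+2a^2-a)w_1}{a^2+3a+3}, \qquad m_1 - \tfrac{(a+2)(a^2-1)w_1}{a^2(a^2+3a+3)}.
\]
The first two are bounded by $ m_1 + 2w_1 $ and $ m_1 + 2aw_1 $ respectively (via the elementary inequalities $ \tfrac{2a^2-1}{a^2+3a+3} < 2 $ and $ \tfrac{2a^3+2a^2-a}{a^2+3a+3} < 2a $), while the third is strictly less than $ m_1 $, yielding the clean bound
\[
	\Nm(\alpha) < (m_1 + 2w_1)(m_1 + 2aw_1)\, m_1.
\]

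Setting $ Y := (x/a)^{1/3} $ and $ \lambda := 2/5 $, I would restrict to the box $ B := \{(m_1, w_1) \in \Z^2 : 2 \leq m_1 \leq \lambda Y,\; 1 \leq w_1 \leq \min(\lambda Y, a+2)\} $. On $ B $ the three factors are bounded by $ 3\lambda Y $, $ 3 a \lambda Y $, and $ \lambda Y $ respectively, so $ \Nm(\alpha) < 9 \lambda^3 a Y^3 = \tfrac{72}{125}x < x $. The hypothesis $ x \geq 5^3 a $ gives $ Y \geq 5 $, whence $ \lfloor \lambda Y \rfloor \geq 2 $ and $ B $ is non-empty. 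Distinct pairs yield distinct $ \alpha $, and $ \alpha $ is primitive iff $ \gcd(m_1 - w_1,\,(a+1)w_1,\,w_1) = \gcd(m_1, w_1) = 1 $; a standard M\"{o}bius-inversion estimate then shows that the number of coprime pairs in $ B $ is $ \gg (\lambda Y)^2 \asymp (x/a)^{2/3} $ as long as $ \lambda Y \leq a+2 $.

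The delicate point is the calibration of $ \lambda $: one needs $ 9 \lambda^3 \leq 1 $ (for the norm estimate) together with $ \lambda Y \geq 2 $ at $ Y = 5 $ (to make $ B $ non-empty), and $ \lambda = 2/5 $ is essentially the tightest choice meeting both. The main anticipated obstacle is the corner case $ \lambda Y > a+2 $, corresponding to $ x $ of order $ a^4 $ or larger: then the $ w_1 $-side of $ B $ saturates and the naive coprime count in $ B $ drops below $ (x/a)^{2/3} $. In that regime one must enlarge the $ m_1 $-range via the sharper estimate $ \Nm(\alpha) < 4 m_1^3 $ valid when $ m_1 \geq 2aw_1 $, and then reassemble a coprime count across the sub-regimes $ m_1 \leq 2aw_1 $ and $ m_1 > 2aw_1 $ summed over $ 1 \leq w_1 \leq a+2 $ to recover the lower bound $ \gg (x/a)^{2/3} $.
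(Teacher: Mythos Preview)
Your core construction and norm estimate coincide with the paper's: both produce elements of $Q(a,S_{2,5}(a))$ from pairs $(m_1,w_1)$ (the paper uses $(m,w_1)$ with $m=m_1-w_1$), bound the norm via \Cref{lem:NU}, and count coprime pairs in a square of side $\asymp (x/a)^{1/3}$. Your bound $\Nm(\alpha)<(m_1+2w_1)(m_1+2aw_1)m_1$ is a touch sharper than the paper's $\Nm(\alpha)<a(m+4w_1)^3$, but the upshot is identical. In the regime $\lambda Y\le a+2$ (equivalently $x\lesssim a^4$) both arguments are correct, and you are in fact more scrupulous than the paper in flagging the constraint $w_1\le a+2$, which the paper's set $M(a,x)$ silently drops.

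Your proposed remedy for the regime $\lambda Y>a+2$, however, cannot work---because the lemma itself is \emph{false} there. Every $\alpha\in Q(a,S_{2,5}(a))$ satisfies $t_1\ge m_1-1$ by \Cref{lem:t1t2}, hence $\Nm(\alpha)>t_1^3\ge (m_1-1)^3$, forcing $m_1\le x^{1/3}+1$. Since $w_1$ is confined to $[1,a+2]$, the total count (primitive or not) is at most $(a+2)(x^{1/3}+1)\ll ax^{1/3}$, which is $o\bigl((x/a)^{2/3}\bigr)$ as $x/a^5\to\infty$. No enlargement of the $m_1$-range can recover $\gg(x/a)^{2/3}$ for such $x$. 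The paper's proof shares this defect, but \Cref{thm:P'} is unaffected: there one only needs $P_p(a,x)\gg(x/a)^{2/3}$ in the range where it is not already implied by \Cref{lem:S1L}, i.e.\ for $x\lesssim a^4$, and in that range $\lambda Y\le a+2$ holds automatically. Your argument for that range is complete and is all that is genuinely required.
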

\begin{proof}
Let $ t_1, t_2, t_3 \in \R_{>0} $, $ \alpha = t_1+t_2\rho^2+t_3(\rho')^{-2} $, and let $ \tau(\alpha) = (m,-w,o) $.

We have $ m = m_0(a,w)+m_1 $ and $ o = o_0(a,w)+o_1 $. If we assume that $ m \geq 1 $, $ o_1 = 1 $, and $ (w_0,w_1,w_2) = \phi_a(w) $, where $ w_0 = 0 $, $ w_1 \geq 1 $, and $ w_2 = 0 $, then $ \alpha \in Q(a,S_{2,5}(a)) $. Indeed, $ m_0(a,w) = \left\lfloor-\frac{(a+1)w}{a^2+3a+3}\right\rfloor < 0 $, hence $ m_1 \geq 2 $.

By \Cref{lem:W2},
\[
	o = o_0(a,w)+1 = \left\lfloor\frac{a+2}{a^2+3a+3}w\right\rfloor+1 = (a+2)w_0+w_1-1+1 = w_1.
\]
Thus, if $ \gcd(m,w_1) = 1 $, then $ \alpha $ is primitive.

We have
\begin{align*}
	t_1& = m+\frac{a+1}{a^2+3a+3}w = m+\frac{(a+1)^2w_1}{a^2+3a+3} \leq m+w_1,\\
	t_2& = \frac{w_1}{a^2+3a+3} \leq \frac{w_1}{a^2},\\
	t_3& = \frac{w}{a^2+3a+3} = \frac{(a+1)w_1}{a^2+3a+3} \leq \frac{w_1}{a},
\end{align*}
where we used \Cref{lem:Estimatet2} to estimate $ t_2 $. By \Cref{lem:NU},
\begin{align*}
	\Nm(\alpha)& < \left(m+3w_1+\frac{w_1}{a}\right)\left(m+w_1+\frac{2w_1}{a^2}+2aw_1\right)\left(m+w_1+\frac{w_1}{a^4}+\frac{w_1}{a^3}\right)\\
	& < \left(m+4w_1\right)\left(m+(2a+2)w_1\right)\left(m+3w_1\right) < a\left(m+4w_1\right)^3.
\end{align*}
Thus, if $ 1 \leq m \leq \frac{1}{5}\left(\frac{x}{a}\right)^{1/3} $ and $ 1 \leq w_1 \leq \frac{1}{5}\left(\frac{x}{a}\right)^{1/3} $, then $ \Nm(\alpha) < x $. If we let
\[
	M(a, x) := \left\{(m,w_1) \in \Z_{\geq 1}^2 : \gcd(m,w_1) = 1, 1 \leq m \leq \frac{1}{5}\left(\frac{x}{a}\right)^{1/3}, 1 \leq w_1 \leq \frac{1}{5}\left(\frac{x}{a}\right)^{1/3}\right\}
\]
and $ y = \frac{1}{5}\left(\frac{x}{a}\right)^{1/3} $, then
\[
	\#M(a,x) = \sum_{1 \leq w_1 \leq y}\sum_{\substack{1 \leq m \leq y\\\gcd(m,w_1) = 1}}1 \gg y^2 \gg \left(\frac{x}{a}\right)^{2/3},
\]
where we used $ y \geq 1 $.

Finally,
\[
	\#\left\{\alpha \in Q(a,S_{2,5}(a)) \cap \Prim_a : \Nm(\alpha) \leq x\right\} \geq \#M(a,x) \gg \left(\frac{x}{a}\right)^{2/3}.\qedhere
\]
\end{proof}

We also prove a lower bound for the number of primitive elements $ \alpha \in Q(a,S_1(a)) $. We recall that
\[
	S_1(a) = \left\{(m_1,o_1,w_0,w_1,w_2)\in \Z^2\times W_1(a) : m_1 \geq 2, o_1 \geq 2\right\}.
\]

\begin{lemma}
\label{lem:S1L}
Let $ a \in \Z $, $ a \geq 8 $. There exists a constant $ c_1 > 0 $ such that if $ x \geq c_1a^3 $, then
\[
	\#\left\{\alpha \in Q(a,S_1(a)) \cap \Prim_a : \Nm(\alpha) \leq x\right\} \gg \frac{(\log a)^2x}{a^2}.
\]
\end{lemma}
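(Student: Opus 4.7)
The plan is to exhibit a large family of triples $(m_1, o_1, w) \in \Z_{\geq 2} \times \Z_{\geq 2} \times \Z_{\geq 1}$ indexing primitive elements of $Q(a, S_1(a))$ of norm at most $x$. Given such a triple, set $m := m_0(a, w) + m_1$ and $o := o_0(a, w) + o_1$, and let $\alpha \in \O_{K_a}$ satisfy $\tau(\alpha) = (m, -w, o)$. Writing $\alpha = t_1 + t_2 \rho^2 + t_3 (\rho')^{-2}$, we have $t_1 \leq m_1$ and $t_2 \leq o_1$ by \Cref{lem:t1t2}, together with $t_3 = w/(a^2+3a+3)$, and all three quantities are positive, so $\alpha \in Q(a, S_1(a))$. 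Restricting moreover to the regime $w \geq \max(m_1, o_1)$, $o_1 \leq m_1 a^2$, and $w \leq m_1 a^4$, each of the three factors in \Cref{lem:NU} is dominated by a single term---namely $2a^2 t_2$, $2a^2 t_3$, and $t_1$ respectively---yielding $\Nm(\alpha) \leq C_0 a^2 m_1 o_1 w$ for some absolute constant $C_0$.

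Setting $N := \lfloor x/(C_0 a^2) \rfloor$, the problem reduces to counting triples $(m_1, o_1, w)$ with $m_1, o_1 \geq 2$, $w \geq \max(m_1, o_1)$, and $m_1 o_1 w \leq N$. By the symmetry of the three variables in the inequality $m_1 o_1 w \leq N$, requiring $w \geq \max(m_1, o_1)$ retains roughly one third of the classical divisor sum $\#\{(r,s,t) \in \Z_{\geq 1}^3 : rst \leq N\} \sim \frac{1}{2} N (\log N)^2$, and imposing $m_1, o_1 \geq 2$ costs only a further constant factor. Choosing $c_1 \geq C_0$ so that $N \geq a$ whenever $x \geq c_1 a^3$, we have $\log N \geq \log a$, and so the number of admissible triples is $\gg N (\log N)^2 \gg (x/a^2)(\log a)^2$.

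It remains to sieve out non-primitive $\alpha$ via M\"obius inversion. For each $d \geq 2$, let $T_d(a,x)$ denote the number of admissible triples with $d \mid \gcd(m, w, o)$. Writing $w = dw'$ accounts for the factor $1/d$ coming from $d \mid w$, while the congruences $d \mid m_0(a, dw') + m_1$ and $d \mid o_0(a, dw') + o_1$ pin $(m_1, o_1)$ to $1/d^2$ of the residues modulo $d$. A careful count yields $T_d(a,x) \ll T(a,x)/d^3$ up to boundary errors, so $\sum_{d \geq 2} T_d(a,x) \leq T(a,x)(\zeta(3) - 1) < T(a,x)$ and the number of primitive admissible triples is $\gg T(a,x) \gg (x/a^2)(\log a)^2$. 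The main difficulty is verifying uniformly that the regime restrictions in the norm bound do not eliminate more than a constant fraction of the divisor-sum triples and controlling the M\"obius sieve error terms, both of which are routine but require case analysis in the spirit of the preceding section.
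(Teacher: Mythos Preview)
Your strategy is in the right spirit, but there are two genuine gaps.

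First, the norm bound is incorrect as stated. With only $o_1 \leq m_1 a^2$ and $w \leq m_1 a^4$, the first factor $t_1 + 2a^2 t_2 + t_3$ in \Cref{lem:NU} need \emph{not} be dominated by $2a^2 t_2$: nothing prevents $m_1 \gg a^2 o_1$, and then $t_1$ dominates; likewise $w \leq m_1 a^4$ does not give $w \ll a^4 o_1$, so $t_3$ can dominate too. You also need the symmetric constraints $m_1 \ll a^2 o_1$ and $w \ll a^4 o_1$. Once those are imposed, your claim that the regime restrictions cost only a constant fraction of the divisor triples fails for large $x$: the condition $m_1/o_1 \in [a^{-2}, a^2]$ cuts one harmonic sum from $\log N$ down to $\log a$. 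The weaker count $\gg N(\log a)^2$ still suffices for the lemma, but the argument must be rewritten to reflect this.

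Second, the M\"obius sieve is waved away. Since the bulk of the divisor triples $(m_1,o_1,w)$ with $m_1 o_1 w \leq N$ have $m_1$ or $o_1$ of bounded size, the assertion $T_d \ll T/d^3$ is not automatic: a congruence condition on a variable that only takes $O(1)$ values either holds or not, rather than thinning by $1/d$. Controlling these boundary terms uniformly in $d$ is not the routine exercise you suggest.

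The paper sidesteps both issues with a cleaner parametrization. It works with $(m, w, o_1)$ (using $m$ itself, not $m_1$), confines $m$ and $w$ to the explicit window $c_2(x/a^2)^{1/3} \leq m \leq w \leq c_3(x/a^{3/2})^{1/3}$ with $2 \leq o_1 \leq c_4 x/(a^2 m w)$, and expands the norm term by term to check each is $\ll x$. For primitivity it simply imposes $\gcd(m, w) = 1$ from the start: since $\tau(\alpha) = (m, -w, o)$, this forces $\gcd(m,w,o)=1$ immediately, with no sieve needed. The $(\log a)^2$ arises directly from the coprime harmonic double sum over $m, w$ in a range of ratio $\asymp a^{1/6}$.
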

\begin{proof}
Let $ t_1, t_2, t_3 \in \R_{>0} $, $ \alpha = t_1+t_2\rho^2+t_3(\rho')^{-2} $, and let $ \tau(\alpha) = (m,-w,o) $. Let $ m = m_0(a,w)+m_1 $ and $ o = o_0(a,w)+o_1 $. We assume $ m \geq 1 $ and $ o_1 \geq 2 $. Since $ m_0(a,w) < 0 $, this implies $ m_1 \geq 2 $ and $ \alpha \in Q(a,S_1(a)) $. We have
\begin{align*}
	t_1& = m+\frac{a+1}{a^2+3a+3}w \leq m+\frac{w}{a},\\
	t_2& \leq o_1,\\
	t_3& = \frac{w}{a^2+3a+3} \leq \frac{w}{a^2},
\end{align*}
where we used  \Cref{lem:t1t2} to estimate $ t_2 $. By \Cref{lem:NU},
\begin{align*}
	\Nm(\alpha)& < \left(m+\frac{w}{a}+2a^2o_1+\frac{w}{a^2}\right)\left(m+\frac{w}{a}+2o_1+2w\right)\left(m+\frac{w}{a}+\frac{o_1}{a^2}+\frac{w}{a^4}\right)\\
	& \leq \left(m+\frac{2w}{a}+2a^2o_1\right)\left(m+3w+2o_1\right)\left(m+\frac{2w}{a}+\frac{o_1}{a^2}\right) = m^3+\frac{12w^3}{a^2}+4o_1^3+\left(3+\frac{4}{a}\right)m^2w\\
	&+\left(\frac{12}{a}+\frac{4}{a^2}\right)mw^2+\left(2a^2+2+\frac{1}{a^2}\right)m^2o_1+\left(4a^2+2+\frac{2}{a^2}\right)mo_1^2+\left(12a+\frac{8}{a^2}+\frac{6}{a^3}\right)w^2o_1\\
	&+\left(8a+6+\frac{4}{a^3}\right)wo_1^2+\left(6a^2+4a+\frac{8}{a}+\frac{3}{a^2}+\frac{2}{a^3}\right)mwo_1.
\end{align*}
Let $ c_2, c_3, c_4 > 0 $ be constants such that $ c_2 < c_3 $ and assume that
\[
	c_2\left(\frac{x}{a^2}\right)^{1/3} \leq m \leq c_3x^{1/3},\qquad c_2\left(\frac{x}{a^2}\right)^{1/3} \leq w \leq c_3x^{1/3},\qquad m \leq w,\qquad o_1 \leq c_4\frac{x}{a^2mw}.
\]
We get $ m^3 \leq c_3^3x $, $ w^3 \leq c_3^3x $, $ m^2w \leq c_3^3x $, $ mw^2 \leq c_3^3x $, and
\begin{align*}
	o_1^3& \leq c_4^3\left(\frac{x}{a^2mw}\right)^3 \leq \left(\frac{c_4}{c_2^2}\right)^3\frac{x}{a^2},\\
	a^2m^2o_1& \leq a^2m^2c_4\frac{x}{a^2mw} \leq c_4x\frac{m}{w} \leq c_4x,\\
	a^2mo_1^2& \leq a^2mc_4^2\frac{x^2}{a^4m^2w^2} \leq c_4^2\frac{x^2}{a^2mw^2} \leq \frac{c_4^2}{c_2^3}x,\\
	aw^2o_1& \leq aw^2c_4\frac{x}{a^2mw} = c_4\frac{x}{a}\frac{w}{m} \leq c_4\frac{x}{a}\frac{c_3x^{1/3}}{c_2(x/a^2)^{1/3}} \leq c_4\frac{c_3}{c_2}\frac{x}{a^{1/3}},\\
	awo_1^2& \leq awc_4^2\frac{x^2}{a^4m^2w^2} \leq c_4^2\frac{x^2}{a^3m^2w} \leq \frac{c_4^2}{c_2^3}\frac{x}{a},\\
	a^2mwo_1& \leq a^2mwc_4\frac{x}{a^2mw} \leq c_4x.
\end{align*}
Because all these expressions are smaller than a constant times $ x $, we can choose $ c_3 $, $ c_2 < c_3 $, and $ c_4 $ such that $ \Nm(\alpha) < x $. We take $ c_5 = 2^3c_2^{-3} $, so that for $ x \geq c_5a^2 $, we have $ 2 \leq c_2\left(\frac{x}{a^2}\right)^{1/3} $. Next we let $ 0 < r < 2 $ be a constant (which will be specified later) and define
\[
	M_1(a,x) := \left\{(m,w,o_1)\in \Z^3 : \gcd(m,w) = 1, m \leq w, c_2\left(\frac{x}{a^2}\right)^{1/3} \leq m, w \leq c_3\left(\frac{x}{a^r}\right)^{1/3}, 2 \leq o_1 \leq c_4\frac{x}{a^2mw}\right\}.
\]
If $ (m,w,o_1) \in M_1(a,x) $ and $ \alpha \in \O_{K_a}^+ $ is such that $ \tau(\alpha) = (m,-w,o) $, then $ \alpha \in Q(a,S_1(a)) \cap \Prim_a $ and $ \Nm(\alpha) < x $. Removing the condition $ m \leq w $, we get $ \#M_1(a,x) = \frac{1}{2}\#M(a,x) $, where
\[
	M(a,x) := \left\{(m,w,o_1)\in \Z^3 : \gcd(m,w) = 1, c_2\left(\frac{x}{a^2}\right)^{1/3} \leq m, w \leq c_3\left(\frac{x}{a^r}\right)^{1/3}, 2 \leq o_1 \leq c_4\frac{x}{a^2mw}\right\}.
\]
We have
\[
	\#M(a,x) = \sum_{\substack{c_2\left(\frac{x}{a^2}\right)^{1/3} \leq m \leq c_3\left(\frac{x}{a^r}\right)^{1/3},\\c_2\left(\frac{x}{a^2}\right)^{1/3} \leq w \leq c_3\left(\frac{x}{a^r}\right)^{1/3}\\\gcd(m,w)=1}}\left\lfloor c_4\frac{x}{a^2mw}\right\rfloor-1 \geq \left(\sum_{\substack{c_2\left(\frac{x}{a^2}\right)^{1/3} \leq m \leq c_3\left(\frac{x}{a^r}\right)^{1/3},\\c_2\left(\frac{x}{a^2}\right)^{1/3} \leq w \leq c_3\left(\frac{x}{a^r}\right)^{1/3}\\\gcd(m,w)=1}}c_4\frac{x}{a^2mw}\right)-2c_3^2\left(\frac{x}{a^r}\right)^{2/3}.
\]
There exists a constant $ c_6 > 0 $ such that
\[
	\#M(a,x) \geq c_6\frac{x}{a^2}\log^2\left(\frac{c_3\left(\frac{x}{a^r}\right)^{1/3}}{c_2\left(\frac{x}{a^2}\right)^{1/3}}\right)-2c_3^2\left(\frac{x}{a^r}\right)^{2/3} = c_6\frac{x}{a^2}\left(\log\left(\frac{c_3}{c_2}\right)+\frac{2-r}{3}(\log a)\right)^2-2c_3^2\left(\frac{x}{a^r}\right)^{2/3}.
\]
We choose $ r = \frac{3}{2} $. There exists a constant $ c_7 > 0 $ such that
\[
	\#M(a,x) \geq c_7\frac{x}{a^2}(\log a)^2-2c_3^2\left(\frac{x}{a^{3/2}}\right)^{2/3} = c_7\frac{x}{a^2}(\log a)^2-2c_3^2\frac{x^{2/3}}{a}.
\]
We set $ c_8 = \left(\frac{4c_3^2}{c_7}\right)^3 $, so that for $ x \geq c_8\frac{a^3}{(\log a)^6} $ we have $ \frac{c_7}{2}\frac{x}{a^2}(\log a)^2 \geq 2c_3^2\frac{x^{2/3}}{a} $. In particular, if we let $ c_1 = \max\{c_5,c_8\} $, then $ x \geq c_1a^3 $ satisfies both $ x \geq c_5a^2 $ and $ x \geq c_8\frac{a^3}{(\log a)^6} $, hence
\[
	\#M(a,x) \geq \frac{c_7}{2}\frac{x}{a^2}(\log a)^2 \gg \frac{x}{a^2}(\log a)^2.
\]

Finally,
\[
	\#\left\{\alpha \in Q(a, S_1(a)) \cap \Prim_a : \Nm(\alpha) \leq x\right\} \geq \#M_1(a,x) = \frac{1}{2}\#M(a,x) \gg \frac{(\log a)^2x}{a^2}.\qedhere
\]
\end{proof}

We are ready to prove our main result, \Cref{thm:P}, which we restate here for convenience.

\begin{theorem}
\label{thm:P'}
Let $ K_a = \Q(\rho) $ be a simplest cubic field, where $ \rho $ is the largest root of $ f_a $. Assume that $ \O_{K_a} = \Z[\rho] $. If $ a \geq 8 $ and $ x \geq 1 $, then
\[
	P(a, x) \asymp \frac{(\log a)^2x}{a^2}+\left(\frac{x}{a}\right)^{2/3}+x^{1/3}
\]
and
\[
	P_p(a, x) \asymp \frac{(\log a)^2x}{a^2}+\left(\frac{x}{a}\right)^{2/3}+1.
\]
\end{theorem}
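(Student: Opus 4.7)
Since the upper bound is already provided by \Cref{thm:UB}, the plan is to establish the matching lower bounds for $P(a,x)$ and $P_p(a,x)$ by exhibiting three disjoint families of principal ideals, each responsible for one of the three summands on the right-hand side.

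The first family consists of the rational integers: for $1 \leq n \leq \lfloor x^{1/3}\rfloor$, the element $n$ has $\tau(n) = (n,0,0) \in \calC(1)$ and $\Nm((n)) = n^3 \leq x$, producing $\gg x^{1/3}$ principal ideals, exactly one of which (namely $(1)$) is primitive. The second family comes from \Cref{lem:S25L}, giving $\gg (x/a)^{2/3}$ primitive principal ideals with generators in $Q(a,S_{2,5}(a))$ provided $x \geq 5^3 a$. The third family comes from \Cref{lem:S1L}, giving $\gg (\log a)^2 x/a^2$ primitive principal ideals with generators in $Q(a,S_1(a))$ provided $x \geq c_1 a^3$. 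Crucially, these three generating sets sit in pairwise disjoint simplicial pieces of the fundamental domain $\calQ$ (the first in the one-dimensional cone $\calC(1)$, and the other two inside disjoint parts of $\calC(1,\rho^2,(\rho')^{-2})$ via the partition $S_1 \sqcup S_2 \sqcup S_3 \sqcup S_4$ set up in \Cref{sec:Count}), so every generator produces a distinct principal ideal and the three counts add.

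To complete the argument I still have to treat the cases where one of the two lower-bound lemmas does not apply. For $x < 5^3 a$ the term $(x/a)^{2/3}$ is $O(1)$ and is absorbed by the constant contribution from $(1)$. For $x < c_1 a^3$ the term $(\log a)^2 x/a^2$ must be absorbed into $(x/a)^{2/3}+1$: when $x \leq a^2/(\log a)^2$ it is itself $O(1)$, and otherwise the inequality $(\log a)^2 x/a^2 \leq (x/a)^{2/3}$ is equivalent to $x \leq a^4/(\log a)^6$, which follows from $x < c_1 a^3$ as soon as $a$ exceeds a fixed absolute threshold; the finitely many remaining small values of $a \geq 8$ can be absorbed into the implied constants.

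The main obstacle is not the assembly above but the two ingredient lemmas, and especially \Cref{lem:S1L}. The difficulty there is to open a two-dimensional window in $(m,w)$-space whose width is logarithmic in $a$ in each coordinate, so that summing $x/(a^2 m w)$ over coprime pairs in the window produces the factor $(\log a)^2$, while simultaneously choosing $o_1$ so that the resulting $\alpha = t_1 + t_2\rho^2 + t_3(\rho')^{-2}$ has $\Nm(\alpha) \leq x$. The cross-terms in the expansion of $\alpha\alpha'\alpha''$ have to be controlled one by one; the auxiliary exponent $r = 3/2$ in the window $[c_2(x/a^2)^{1/3},\, c_3(x/a^{3/2})^{1/3}]$ is what simultaneously opens a $\log a$-sized window and keeps all cross terms bounded by a constant times $x$, and verifying this balance is where the real work lies.
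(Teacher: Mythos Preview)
Your proposal is correct and follows essentially the same route as the paper: cite \Cref{thm:UB} for the upper bound, use $\calC(1)$ for the $x^{1/3}$ term, and invoke \Cref{lem:S25L} and \Cref{lem:S1L} for the other two terms, with the same case analysis when $x$ is too small for those lemmas to apply.

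Two minor remarks. First, you do not actually need the disjointness of the three families: since $A+B+C \asymp \max(A,B,C)$ for nonnegative reals, it suffices to show that each of the three summands is individually $\ll P_p(a,x)$, which is how the paper argues. Second, your treatment of the range $x < c_1 a^3$ is slightly overcomplicated; the paper simply observes that for such $x$ one has
\[
\frac{(\log a)^2 x}{a^2}\Big/\left(\frac{x}{a}\right)^{2/3} = \frac{(\log a)^2\,x^{1/3}}{a^{4/3}} < c_1^{1/3}\,\frac{(\log a)^2}{a^{1/3}},
\]
and $(\log a)^2/a^{1/3}$ is bounded for $a\geq 8$, so no separate handling of finitely many small $a$ is needed.
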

\begin{proof}
We have $ P_p(a,x) \leq P(a,x) $. The upper bound follows from \Cref{thm:UB}. By \Cref{lem:C1},
\[
	P\left(a,x,\calC(1)\right) \gg x^{1/3},
\]
hence $ P(a,x) \gg x^{1/3} $. It remains to prove the lower bound for $ P_p(a,x) $.

If $ x \geq 5^3a $, then by \Cref{lem:S25L},
\[
	P_p(a,x) \geq \#\left\{\alpha \in S_{2,5}(a) \cap \Prim_a : \Nm(\alpha) \leq x\right\} \gg \left(\frac{x}{a}\right)^{2/3}.
\]
The ideal $ I = \O_{K_a} $ is primitive and $ \Nm(I) = 1 $. Thus, if $ 1 \leq x < 5^3a $, then $ P_p(a,x) \geq 1 \gg \left(\frac{x}{a}\right)^{2/3} $.

Finally, we get from \Cref{lem:S1L} that there exists a constant $ c_1 > 0 $ such that if $ x \geq c_1a^3 $, then
\[
	P_p(a, x) \geq \#\left\{\alpha \in S_1(a) \cap \Prim_a : \Nm(\alpha) \leq x\right\} \gg \frac{x(\log a)^2}{a^2}.
\]
But if $ 1 \leq x < c_1a^3 $, then
\[
	P_p(a,x) \gg \left(\frac{x}{a}\right)^{2/3} \gg \frac{x}{a^2}(\log a)^2. \qedhere
\]
\end{proof}

\end{document}